\newcommand{\fig}[1]{{Figure~\ref{fig:#1}}}
\newcommand{\sct}[1]{{Section~\ref{sec:#1}}}
\newcommand{\app}[1]{{Appendix~\ref{app:#1}}}
\newcommand{\ECr}{event--selected $C^r$}
\newcommand{\ECrD}{\ensuremath{EC^r(D)}}
\newcommand{\EC}[1]{event--selected $C^{#1}$}
\newcommand{\BC}[2]{\ensuremath{B^{C^{#1}}_{#2}}}
\newcommand{\localsec}{local section}
\newcommand{\see}[1]{(see~#1)}
\newcommand{\set}[1]{\left\{ #1 \right\}}
\newcommand{\paren}[1]{\left( #1 \right)}
\newcommand{\brak}[1]{\left[ #1 \right]}
\newcommand{\abs}[1]{\left| #1 \right|}
\newcommand{\norm}[1]{\left\| #1 \right\|}
\newcommand{\norma}[2]{\left\| #2 \right\|_{#1}}
\newcommand{\normo}[1]{\left\| #1 \right\|_1}
\newcommand{\inorm}[1]{\norma{i}{#1}}
\newcommand{\pw}[1]{\left\{\begin{array}{ll} #1 \end{array}\right. }
\newcommand{\mat}[2]{\brak{\begin{array}{#1} #2 \end{array}}}
\newcommand{\bd}{\partial}
\newcommand{\st}{\mid}
\newcommand{\cube}[1]{B_{#1}}
\newcommand{\cuben}{\cube{n}}
\newcommand{\Ccoprod}[1]{C^r\paren{\coprod_{b\in\cuben} #1,\coprod_{b\in\cuben} T#1}}
\newcommand{\pmones}{\set{-1,+1}^n}
\newcommand{\td}[1]{\widetilde{#1}}
\newcommand{\ha}[1]{\widehat{#1}}
\newcommand{\obar}[1]{\overline{#1}}
\newcommand{\eqn}[1]{\begin{equation*}\begin{aligned} #1 \end{aligned}\end{equation*}}
\newcommand{\eqnn}[1]{\begin{equation}\begin{aligned} #1 \end{aligned}\end{equation}}
\newcommand{\sm}{\setminus}
\newcommand{\into}{\rightarrow}
\newcommand{\goesto}{\rightarrow}
\newcommand{\R}{\mathbb{R}}
\newcommand{\Z}{\mathbb{Z}}
\newcommand{\N}{\mathbb{N}}
\newcommand{\T}{\top}
\newcommand{\e}{\mathscr}
\newcommand{\vphi}{\varphi}
\newcommand{\veps}{\varepsilon}
\newcommand{\ones}{\mathds{1}}
\newcommand{\zerod}{0_d}
\newcommand{\zerodd}{0_{2d}}
\newcommand{\zerodxd}{0_{d\times d}}
\newcommand{\fp}{\rho}
\newcommand{\flbx}{\chi}
\newcommand{\ipct}{\psi}
\newcommand{\pre}[1]{#1^{-1}}
\newcommand{\id}{\operatorname{id}}
\newcommand{\conv}{\operatorname{conv}}
\newcommand{\Int}[1]{\operatorname{Int}#1}
\newcommand{\co}[1]{\operatorname{co}#1}
\newcommand{\pset}[1]{2^{#1}}
\newcommand{\sgn}{\operatorname{sign}}
\newcommand{\word}{\omega}
\newcommand{\Words}{\Omega}
\newcommand{\Pmap}{Poincar\'{e} map}
\newcommand{\tbd}[1]{}
\newcommand{\todo}[1]{}
\newcommand{\sam}[1]{}
\newcommand{\kod}[1]{}
\newcommand{\SR}[1]{}
\newcommand{\tbd}[1]{{\normalsize{\textsc{({\color{red}TBD:\ }\color{blue}#1)}}}}
\newcommand{\todo}[1]{{\normalsize{{({\color{red}TODO:\ }\color{blue}#1)}}}}
\newcommand{\sam}[1]{{\normalsize{{({Sam:\ }\color{blue}#1})}}}
\newcommand{\kod}[1]{{\normalsize{{({Kod:\ }\color{red}#1})}}}
\newcommand{\SR}[1]{{\normalsize{\textsc{({SR:\ }\color{magenta}#1})}}}
\newtheorem{proposition}{Proposition}
\newtheorem{definition}{Definition}
\newtheorem{theorem}{Theorem}
\newtheorem{corollary}{Corollary}
\newtheorem{lemma}{Lemma}
\newtheorem{claim}{Claim}
\newtheorem{assumption}{Assumption}
\newtheorem{remark}{Remark}
\newtheorem{example}{Example}
\newcommand{\defn}[1]{\begin{definition} #1 \end{definition}}
\newcommand{\rem}[1]{\begin{remark} #1 \end{remark}}
\newcommand{\pf}[1]{\begin{proof} #1 \end{proof}}
\newcommand{\thmflow}{Theorem~\ref{thm:flow} (local flow)}
\newcommand{\corflow}{Corollary~\ref{cor:flow} (global flow)}
\newcommand{\thmimpact}{Theorem~\ref{thm:impact} (time--to--impact)}
\newcommand{\thmflowbox}{Corollary~\ref{thm:flowbox} (flowbox)}
\newcommand{\thmpmap}{Theorem~\ref{thm:pmap} (Poincar\'{e} map)}
\newcommand{\thmpertvf}{Theorem~\ref{thm:ss1} (vector field perturbation)}
\newcommand{\thmpertef}{Theorem~\ref{thm:ss2} (event function perturbation)}
\newcommand{\propstab}{Proposition~\ref{prop:stab} (contractivity test for periodic orbit stability)}
\newcommand{\propinorm}{Proposition~\ref{prop:inorm} (induced norm test for periodic orbit stability)}
\newcommand{\assinc}{Assumption~\ref{ass:filippov} (differential inclusion basic conditions)}
\newcommand{\thminc}{Theorem~\ref{thm:filippov} (differential inclusion perturbation)}
\newcommand{\chainrule}{chain rule~\cite[Theorem~3.1.1]{Scholtes2012}}
\newcommand{\Frechet}{Fr{\'{e}}chet}
\newcommand{\Jacobian}{Jacobian}
\newcommand{\Poincare}{Poincar{\'{e}}}
\newcommand{\CMT}{Banach contraction mapping principle~\cite{Banach1922}~\cite[Lemma~C.35]{Lee2012}}
\newcommand{\sect}[1]{\section{#1}}
\newcommand{\subsect}[1]{\subsection{#1}}
\newcommand{\subsubsect}[1]{\subsubsection{#1}}
\title{Event--Selected Vector Field Discontinuities Yield Piecewise--Differentiable Flows}
\author{
Samuel~A.~Burden%
\thanks{EE Dept., University of Washington, Seattle, WA, USA ({\tt sburden@uw.edu}).}
\and S.~Shankar~Sastry%
\thanks{EECS Dept., University of California, Berkeley, CA, USA ({\tt sastry@eecs.berkeley.edu}).}
\and  Daniel~E.~Koditschek%
\thanks{ESE Dept., University of Pennsylvania, Philadelphia, PA, USA ({\tt kod@seas.upenn.edu})}%
\and Shai~Revzen%
\thanks{EECS Dept., University of Michigan, Ann Arbor, MI, USA ({\tt shrevzen@eecs.umich.edu})}%
}
\date{}
\begin{document}

\maketitle

\iftoggle{siads}{}{\tableofcontents}

\iftoggle{siads}{}{\iftoggle{public}{}{\linenumbers}}
\begin{abstract}
We study a class of discontinuous vector fields brought to our attention by multi--legged animal locomotion. 
Such vector fields arise not only in biomechanics, but also in robotics, neuroscience, and electrical engineering, to name a few domains of application.
Under the conditions that (i) the vector field's discontinuities are locally confined to a finite number of smooth submanifolds and (ii) the vector field is transverse to these surfaces in an appropriate sense, we show that the vector field yields a well--defined flow that is Lipschitz continuous and piecewise--differentiable. 
This implies that although the flow is not classically differentiable, nevertheless it admits a first--order approximation (known as a Bouligand derivative) that is piecewise--linear and continuous at every point. 
We exploit this first--order approximation to infer existence of piecewise--differentiable impact maps (including Poincar\'{e} maps for periodic orbits), 
show the flow is locally conjugate (via a piecewise--differentiable homeomorphism) to a flowbox,
and assess the effect of perturbations (both infinitesimal and non--infinitesimal) on the flow.
We use these results to give a sufficient condition for the exponential stability of a periodic orbit passing through a point of multiply intersecting events, and apply the theory in illustrative examples to demonstrate synchronization in abstract first-- and second--order phase oscillator models.

\end{abstract}


%
%


\sect{Introduction}\label{sec:intro}

We study a class of discontinuous vector fields brought to our attention by multi--legged animal locomotion. 
%
Parsimonious dynamical models for diverse physical phenomena are
governed by vector fields that are smooth except along a finite number
of surfaces of discontinuity.  Examples include: integrate--and--fire
neurons that undergo a discontinuous change in membrane voltage during
a threshold crossing~\cite{KeenerHoppensteadt1981, HopfieldHerz1995,
BizzarriBrambilla2013}; legged locomotors that encounter
discontinuities in net forces due to intermittent interaction of
viscoelastic limbs with terrain~\cite{Alexander1984,
GolubitskyStewart1999, HolmesFull2006}; electrical power systems that
undergo discontinuous changes in network topology triggered by
excessive voltages or currents~\cite{Hiskens1995}.  In each of these
examples, behaviors of interest---%
\emph{phase locking}~\cite{KeenerHoppensteadt1981}
or \emph{local synchronization}~\cite{HopfieldHerz1995};
simultaneous touchdown of two or more legs~\cite{Alexander1984,
GolubitskyStewart1999, HolmesFull2006}; 
\emph{voltage collapse}
phenomena~\cite{DobsonLu1992}~\cite[Section~II-A.2]{Hiskens1995}---%
occur at or near the intersection of multiple surfaces of
discontinuity.  Although analytical tools exist to study orbits that
pass transversely through non--intersecting switching surfaces
(e.g. to assess stability~\cite{AizermanGantmacher1958,
GrizzleAbba2002}, compute first--order
variations~\cite{HiskensPai2000, WendelAmes2012}, and reduce
dimensionality~\cite{BurdenRevzen2015tac}), piecewise--defined (or \emph{hybrid}) systems that admit
simultaneous discrete transitions generally exhibit ``branching''
wherein the flow depends discontinuously%
\footnote{
We note that hybrid state spaces do not possess a natural metric, and
continuity of the flow depends on the chosen metric; this issue is
discussed in detail
elsewhere~\cite[Sec.~V-A]{BurdenGonzalezVasudevan2015tac}.} 
 on initial conditions~\cite[Definition~3.11]{SimicJohansson2005}.
For instance, in the mechanical setting, the flow of a Lagrangian
dynamical system subject to unilateral constraints is generically
discontinuous near simultaneous--impact
events~\cite[Section~7]{Ballard2000}.  In the case where a vector
field is discontinuous across two transversally--intersecting
surfaces, others have established continuity and derived first--order
approximations of the flow~\cite{Ivanov1998, Di-BernardoBudd2008, DieciLopez2011, BizzarriBrambilla2013}.  Techniques applicable to arbitrary numbers of
surfaces have been derived for the case of pure phase oscillators with
perpendicular transition surfaces~\cite{MirolloStrogatz1990}.  

We generalize these approaches to accommodate an arbitrary number of
nonlinear transition surfaces that are not required to be transverse
and extend a suite of analytical and computational techniques from
classical (smooth) dynamical systems theory to the present
(non--smooth) setting.  Under the conditions that (i) the vector
field's discontinuities are locally confined to a finite collection of
smooth submanifolds and (ii) the vector field is ``transverse'' to
these surfaces in an appropriate sense, we show that the vector field
yields a well--defined flow that is Lipschitz continuous and
piecewise--differentiable.  
The definition of
piecewise--differentiability we employ (introduced
in~\cite{Robinson1987, Rockafellar2003, Scholtes2012}) implies
that although the flow is not classically differentiable, nevertheless
it admits a first--order approximation (the so--called \emph{Bouligand
derivative} or \emph{B--derivative}~\cite[Chapter~3]{Scholtes2012})
that is piecewise--linear and continuous at every point.  
We exploit
this first--order approximation to infer existence of
piecewise--differentiable impact maps (including Poincar\'{e} maps for
periodic orbits), assess the effect of perturbations on the flow,
and derive a straightforward procedure to compute the B--derivative. We
use these results to give a sufficient condition for the exponential stability of
a periodic orbit passing through a point of multiply intersecting
events, and apply the theory 
in illustrative examples to demonstrate synchronization in
abstract first-- and second--order phase oscillator models.

The paper is organized as follows. 
Following a brief review of relevant technical background in~\sct{prelim}, we define the discontinuous but piecewise--smooth vector fields of interest and show that they yield continuous B--differentiable flows in~\sct{floww}.
In~\sct{impact} we demonstrate that such flows are continuously conjugate to classical flows, leading to results in~\sct{pert} establishing their persistence under small perturbations. 
\sct{comp} develops stability results and their application to simple oscillator models is given in~\sct{app}.
The paper concludes with a brief summary in~\sct{disc} suggesting the relevance  of these results to biological and engineered systems of practical interest.


\sect{Preliminaries}\label{sec:prelim}
The mathematical constructions we use are ``standard'' in the sense that they are familiar to practitioners of (applied) dynamical systems or optimization theory (or both), but since this paper represents (to the best of our knowledge) the first application of some techniques from non--smooth analysis to the present class of dynamical systems, the reader may be unfamiliar with some of the more recently--developed devices we employ.
Thus in this section we briefly review mathematical concepts and introduce notation that will be used to state and prove results throughout this paper, and suggest textbook references where the interested reader could obtain a complete exposition.

\subsection{Notation}\label{sec:nota}
To simplify the statement of our definitions and results, we fix notation of some objects in $\R^n$:
$+\ones\in\R^n$ denotes the vector of all ones and $-\ones$ its negative; 
$e_j$ is the $j$--th standard Euclidean basis vector;
$\cuben := \set{-1,+1}^n\subset\R^n$ is the set of corners of the $n$-dimensional cube.
We let $\sgn:\R^n\into\set{-1,+1}^n$ 
be the vectorized signum function taking its values in the Euclidean cube's corners, i.e.
\eqnn{\label{eqn:sgn}
\forall x\in\R^n, j\in\set{1,\dots,n} : e_j^\T \sgn(x) = \pw{-1,& x_j < 0 \\ +1,& x_j \ge 0}.
}

To fix notation, in the following paragraphs we will briefly recapitulate standard constructions from topology, differential topology, and dynamical systems theory, and refer the reader to~\cite{Lee2012} for details.
If $U\subset X$ is a subset of a topological space, then $\Int U\subset X$ denotes its \emph{interior} and $\bd U$ denotes its \emph{boundary}.
Let $f:X\into Y$ be a map between topological spaces.
If $U\subset X$ then $f|_U:U\into Y$ denotes the \emph{restriction}. 
If $V\subset Y$ then $\pre{f}(V) = \set{x\in X : f(x)\in V}$ denotes the \emph{pre--image of $V$ under $f$}.

Given $C^r$ manifolds $D,N$, we let $C^r(D,N)$ denote the set of $C^r$ functions from $D$ to $N$.
$H\subset D$ is a \emph{$C^r$ codimension-$k$ submanifold} of the $d$-dimensional manifold $D$ if every $x\in H$ has a neighborhood $U\subset D$ over which there exists a $C^r$ diffeomorphism $h:U\into\R^d$ such that 
\eqn{
H\cap U = h^{-1}\paren{\set{y\in\R^d : y_{k+1} = \cdots = y_{d} = 0}}. 
}
If $f\in C^r(D,N)$ then at every $x\in D$ there exists an induced linear map $Df(x):T_x D\into T_{f(x)}N$ called the \emph{pushforward} (in coordinates, $Df(x)$ is the Jacobian linearization of $f$ at $x\in D$) where $T_x D$ denotes the tangent space to the manifold $D$ at the point $x\in D$.
Globally, the pushforward is a $C^{r-1}$ map $Df:TD\into TN$ where $TD$ is the tangent bundle associated with the manifold $D$; 
we recall that $TD$ is naturally a $2d$--dimensional $C^r$ manifold.
When $N = \R$, we will invoke the standard identification $T_y N\simeq\R$ for all $y\in N$ and regard $Df(x)$ as a linear map from $T_x D$ (i.e. an element of the \emph{cotangent space} $T_x^* D$) into $\R$ for every $x\in D$;
we recall that the \emph{cotangent bundle} $T^*D$ is naturally a $2d$--dimensional $C^r$ manifold.
If $U\subset D$ and $f:U\into N$ is a map, then a map $\td{f}:D\into N$ is a \emph{$C^r$ extension of $f$} if $\td{f}$ is $C^r$ and $\td{f}|_U = f$.

Following~\cite[Chapter~8]{Lee2012}, a (possibly discontinuous or non--differentiable) map $F:D\into TD$ is a \emph{(rough)}%
\footnote{We will constrain the class of vector fields under consideration in \sct{assump}, but for expediency drop the \emph{rough} modifier in the sequel.}
\emph{vector field} if $\pi\circ F = \id_D$ where $\pi:TD\into D$ is the natural projection and $\id_D$ is the identity map on $D$. 
A vector field may, under appropriate conditions, yield an associated \emph{flow} $\phi:\e{F}\into D$ defined over an open subset $\e{F}\subset \R\times D$ called a \emph{flow domain}; in this case for every $x\in D$ the set $\e{F}^x = \e{F}\cap\paren{\R\times\set{x}}$ is an open interval containing the origin, the restriction $\phi|_{\e{F}^x}:\e{F}^x\into D$ is absolutely continuous, and the derivative with respect to time is $D_t\phi(t,x) = F(\phi(t,x))$ for almost every $t\in\e{F}^x$.
A flow is \emph{maximal} if it cannot be extended to a larger flow domain.
An \emph{integral curve} for $F$ is an absolutely continuous function $\xi:I\into D$ over an open interval $I\subset\R$ such that $\dot{\xi}(t) = F(\xi(t))$ for almost all $t\in I$; it is \emph{maximal} if it cannot be extended to an integral curve on a larger open interval.

\subsection{Piecewise Differentiable Functions and Nonsmooth Analysis}
\label{sec:pcrnsa}

The notion of piecewise--differentiability we employ was originally introduced by Robinson~\cite{Robinson1987}; since the recent monograph from Scholtes~\cite{Scholtes2012} provides a more comprehensive exposition, we adopt the notational conventions therein.
Let $r\in\N\cup\set{\infty}$ and $D\subset\R^d$ be open.
A continuous function $f:D\into\R^n$ is called \emph{piecewise--$C^r$} if for every $x\in D$ there exists an open set $U\subset D$ containing $x$ and a finite collection $\set{f_{j}:U\into\R^n}_{{j}\in\e{J}}$ of $C^r$--functions such that for all $x\in U$
we have $f(x)\in\set{f_{j}(x)}_{{j}\in\e{J}}$.
The functions $\set{f_{j}}_{{j}\in\e{J}}$ are called \emph{selection functions} for $f|_U$, and $f$ is said to be a \emph{continuous selection} of $\set{f_j}_{j\in\e{J}}$.
A selection function $f_{j}$ is said to be \emph{active} at $x\in U$ if $f(x) = f_{j}(x)$.
We let $PC^r(D,\R^n)$ denote the set of piecewise--$C^r$ functions from $D$ to $\R^n$.
Note that $PC^r$ is closed under composition and pointwise maximum or minimum of a finite collection of functions.
Any $f\in PC^r(D,\R^n)$ is locally Lipschitz continuous, and a Lipschitz constant for $f$ is given by the supremum of the induced norms of the ({\Frechet}) derivatives of the set of selection functions for $f$.
Piecewise--differentiable functions possess a first--order approximation $Df:TD\into T\R^n$ called the \emph{Bouligand derivative} (or B--derivative)~\cite[Chapter~3]{Scholtes2012}; this is the content of Lemma~4.1.3 in~\cite{Scholtes2012}.
We let $Df(x;v)$ denote the B--derivative of $f$ evaluated along the tangent vector $v\in T_x D$.
The B--derivative is positively homogeneous, i.e. $\forall v\in T_x D,\lambda\ge 0 : Df(x;\lambda v) = \lambda Df(x;v)$.



\sect{Local and Global Flow}\label{sec:floww}
In this section we rederive in our present nonsmooth setting the erstwhile familiar fundamental construction associated with a vector field: its flow.
We begin in \sct{assump} by introducing the class of vector fields under consideration, namely, {\ECr} vector fields.
Subsequently in \sct{const} we construct a candidate flow function via composition of piecewise--differentiable functions.
Finally in \sct{flow} we show this candidate function is indeed the flow of the {\ECr} vector field.

\subsection{Event--Selected Vector Fields Discontinuities}\label{sec:assump}

The flow of a discontinuous vector field $F:D\into TD$ over an open domain $D\subset\R^d$ can exhibit pathological behaviors ranging from nondeterminism to discontinuous dependence on initial conditions.
We will investigate local properties of the flow when the discontinuities are confined to a finite collection of smooth submanifolds through which the flow passes transversally, as formalized in the following definitions.

\defn{\label{def:events}
Given a vector field $F:D\into TD$ over an open domain $D\subset\R^d$ and a function $h\in C^r(U,\R)$ defined on an open subset $U\subset D$,
we say that \emph{$h$ is an event function for $F$ on $U$} if there exists a positive constant $f > 0$ such that $Dh(x)F(x) \ge f$ for all $x\in U$.
A codimension--1 embedded submanifold $\Sigma\subset U$ for which $h|_\Sigma$ is constant is referred to as a \emph{{\localsec} for $F$}. 
}
\noindent
Note that if $h$ is an event function for $F$ on a set containing $\rho\in D$ then necessarily $Dh(\rho) \ne 0$.

We will show in \sct{flow} that vector fields that are differentiable everywhere except a finite collection of local sections give rise to a well--defined flow that is piecewise--differentiable.
This class of \emph{event--selected} vector fields is defined formally as follows.

\defn{\label{def:ecr}
Given a vector field $F:D\into TD$ over an open domain $D\subset\R^d$, we say that \emph{$F$ is {\ECr} at $\rho\in D$} if there exists an open set $U\subset D$ containing $\rho$ and a collection $\set{h_j}_{j=1}^n\subset C^r(U,\R)$ such that:
\begin{enumerate}
\item (event functions) $h_j$ is an event function for $F$ on $U$ for all $j\in\set{1,\dots,n}$;

\item ($C^r$ extension) for all $b\in\pmones = \cuben$, with 
  \eqn{D_b := \set{x\in U : b_j (h_j(x) - h_j(\rho)) \ge 0},} 
  $F|_{\Int{D_b}}$ admits a $C^r$ extension $F_b:U\into TU$.
\end{enumerate}

\noindent
(Note that for any $b\in\cuben$ such that $\Int{D_b} = \emptyset$ the latter condition is satisfied vacuously.)
We let $\ECrD$ denote the set of vector fields that are {\ECr} at every $x\in D$.
}

\noindent
For illustrations of {\ECr} vector fields in the plane $D = \R^2$, refer to Figures~\ref{fig:tau} and~\ref{fig:seq}.

\subsect{Construction of the Piecewise--Differentiable Flow}\label{sec:const}
The following constructions will be used to state and prove results throughout the chapter.
Suppose $F:D\into TD$ is {\ECr} at $\rho\in D$.
By definition there exists a neighborhood $\rho\in U\subset D$ and associated event functions $\set{h_j}_{j=1}^n\subset C^r(U,\R)$ that divide $U$ into regions $\set{D_b}_{b\in\cuben}$ by defined by $D_b := \set{x\in U : (h_j(x) - h_j(\rho)) b_j \ge 0}$.
The boundary of each $D_b$ is contained in the collection of event surfaces $\set{H_j}_{j=1}^n$ defined for each $j\in\set{1,\dots,n}$ by $H_j := \set{ x\in U :\, h_j(x)=h_j(\rho) }$.
For each $j\in\set{1,\dots,n}$ and $b\in\cuben$, we refer to the surface $H_j$ as an \emph{exit boundary in positive time} for $D_b$ if $h_j(D_b)\subset(-\infty,0]$; we refer to $H_j$ as an \emph{exit boundary in negative time} if $h_j(D_b)\subset[0,+\infty)$.
In addition, the definition of {\ECr} implies that there is a collection of $C^r$ vector fields $\set{F_b:U\into TU}_{b\in\cuben}\subset C^r(U,TU)$ such that $F|_{\Int{D_b}} = F_b|_{\Int{D_b}}$ for all $b\in\cuben$.

\subsubsection{Budgeted time--to--boundary}\label{sec:bttb}
For each $b\in\cuben$ with $\Int D_b\ne\emptyset$, let $\phi_b:\e{F}_b\into U$ be a flow for $F_b$ over a flow domain $\e{F}_b\subset\R\times U$ containing $(0,\rho)$; recall that $\phi_b\in C^r(\e{F}_b,U)$ since $F_b\in C^r(U,TU)$.
Each $H\in\set{H_j}_{j=1}^n$ is a {\localsec} for $F$, and therefore a {\localsec} for $F_b$ as well.
This implies $F_b(\rho)$ is transverse to $H$ (more precisely, $F_b(\rho)\not\in T_\rho H$), thus the Implicit Function Theorem~\cite[Theorem~C.40]{Lee2012} implies there exists a $C^r$ ``time--to--impact'' map $\tau_b^H:U_b^H\into \R$ defined on an open set $U_b^H\subset D$ containing $\rho$ such that
\eqnn{\label{eqn:taubH}
\forall x\in U_b^H : (\tau_b^H(x),x)\in\e{F}_b\ \text{and}\ \phi_b(\tau_b^H(x),x)\in H.
}
The collection of maps $\set{\tau_b^H}_{b\in\cuben}$ are jointly defined over the open set $U_b := \bigcap_{j=1}^n U_b^{H_j}$; note that $U_b$ is nonempty since $\rho\in U_b$.
Any $x\in U_b$ can be taken to any $H\in\set{H_j}_{j=1}^n$ by flowing with the vector field $F_b$ for time $\tau_b^H(x)\in\R$.
A useful fact we will recall in the sequel is that if $y = \phi_b(\tau_b(x),x)$ then 
\eqnn{\label{eqn:DtaubH}
D\tau_b^H(x) = \frac{-Dh(y)D_x\phi_b(t,x)}{Dh(y) F_b(y)}; 
}
this follows from~\cite[\S11.2]{HirschSmale1974}.

We now define functions $\tau_b^+,\tau_b^-:\R\times U_b\into\R$ that specify the time required to flow to the exit boundary of $D_b$ in forward or backward time, respectively, without exceeding a given time budget:
\eqnn{\label{eqn:taubp}
\forall (t,x)\in\R\times U_b : \tau_b^+(t,x) & = \max{\set{{0,\min\paren{\set{t}\cup\set{\tau_b^{H_j}(x) : b_j < 0}_{j=1}^n}}}}, \\
\forall (t,x)\in\R\times U_b : \tau_b^-(t,x) & = \min{\set{{0,\max\paren{\set{t}\cup\set{\tau_b^{H_j}(x) : b_j > 0}_{j=1}^n}}}}; 
}
Since $\tau_b^+,\tau_b^-$ are obtained via pointwise minimum and maximum of a finite collection of $C^r$ functions, we conclude $\tau_b^+,\tau_b^-\,\in PC^r(\R\times U_b,\R)$.
See \fig{tau} for an illustration of the component functions of $\tau_b^+$ in a planar vector field.
\begin{figure*}[t]
\centering
\includegraphics[width=13cm]{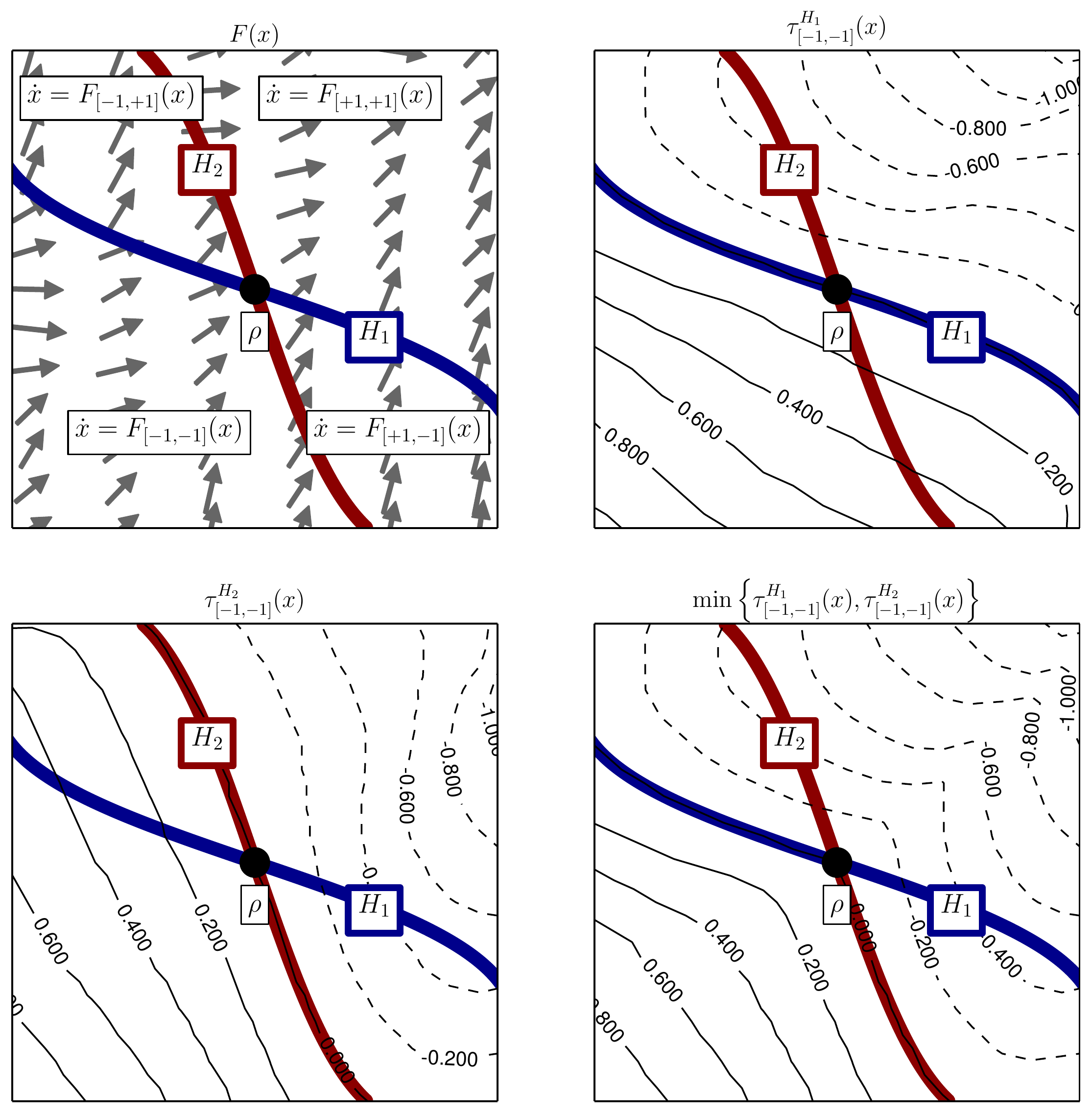}
\caption{
Illustration of a vector field $F:D\into TD$ that is {\ECr} at $\rho\in D = \R^2$.
The functions $\set{\tau_{[-1,-1]}^{H_j}}_{j=1}^2$ specify the time required to flow via the vector field $F_{[-1,-1]}$ to the surface $H_j$.
The pointwise minimum $\min\set{\tau_{[-1,-1]}^{H_j}(x)}_{j=1}^2$ is used in the definition of $\tau_{[-1,-1]}^+$ in~\eqref{eqn:taubp}.
}
\label{fig:tau}
\end{figure*}

In the sequel we will require the derivative of $\tau_b^+$ with respect to $t$ and $x$.
In general this can be obtained via the chain rule~\cite[Theorem~3.1.1]{Scholtes2012}.
If we define $\nu_b^+:U_b\into\R\cup\set{+\infty}$ using the convention $\min\emptyset = +\infty$ by 
\eqnn{\label{eqn:nub}
\forall x\in U_b : \nu_b^+(x) = \min\set{\tau_b^{H_j}(x) : b_j < 0}_{j=1}^n,
}
then we immediately conclude that for all $(t,x)\in\R\times U_b$ such that $\nu_b^+(x) \ne t \ne 0$,
the forward--time budgeted time--to--boundary 
$\tau_b^+$ is classically differentiable and
\eqnn{\label{eqn:Dtaubp}
D\tau_b^+(t,x) = \pw{(0,\ \zerod^\T),& t < 0; \\ (1,\ \zerod^\T),& 0 < t < \nu_b^+(x); \\ \paren{0, D\tau_b^H(x)},& \nu_b^+(x) < t;}
}
where in the third case $H\in\set{H_j}_{j=1}^n$ is such that $\tau_b^H(x) = \nu_b^+(x)$.
To compute $D\tau_b^-(t,x)$, one may simply use the formula in~\eqref{eqn:Dtaubp} applied to the vector field $-F$;
full details are provided in \app{bttb}.

\subsubsection{Budgeted flow--to--boundary}\label{sec:ftb}
By composing the flow $\phi_b$ with the budgeted time--to--boundary functions $\tau_b^+,\tau_b^-$, we now construct functions that flow points up to the exit boundary of $D_b$ in forward or backward time over domains
\eqn{
\e{V}_b^+ & = \set{(t,x)\in\R\times U_b : (\tau_b^+(t,x),x)\in\e{F}_b}, \\
\e{V}_b^- & = \set{(t,x)\in\R\times U_b : (\tau_b^-(t,x),x)\in\e{F}_b}.
}
(Note that $\e{V}_b^+,\e{V}_b^-$ are open since $\tau_b^+,\tau_b^-$ are continuous and nonempty since $(0,\rho)\in\e{V}_b^+,\e{V}_b^-$.)
For each $b\in\cuben$ define the functions $\zeta_b^+:\e{V}^+\into D,\zeta_b^-:\e{V}^-\into D$ by
\eqnn{\label{eqn:zetab}
\forall (t,x)\in\e{V}_b^+ : \zeta_b^+(t,x) & = \phi_b\paren{\tau_b^+(t,x),x}, \\
\forall (t,x)\in\e{V}_b^- : \zeta_b^-(t,x) & = \phi_b\paren{\tau_b^-(t,x),x}.
}
Clearly $\zeta_b^+\in PC^r(\e{V}_b^+, D)$ and $\zeta_b^-\in PC^r(\e{V}_b^-, D)$ since they are obtained by composing $PC^r$ functions~\cite[\S4.1]{Scholtes2012}.
Loosely speaking, the function $\zeta_b^+$ coincides with $\phi_b$ for pairs $(t,x)$ that do not cross the forward--time exit boundary of $D_b$.
Yet unlike $\phi_b$, it is the identity (stationary) flow over the remainder of its domain.
More precisely, for $t < 0$ and for values of 
$t > \nu_b^+(x)$
the function $\tau_b^+(t,x)$ is constant (and hence the derivative with respect to time $D_t \zeta_b^+(t,x) = 0$),
while for $t\in(0,\nu_b^+(x))$ we have $\zeta_b^+(t,x) = \phi_b(t,x)$ (and hence $D_t\zeta_b^+(t,x) = F_b(\phi_b(t,x))$).

Now fix $x\in D_b$, choose $a\in\cuben\sm{b}$, and for $t\in\R$ define 
\eqn{t_{a}^+(t) := \min\set{\tau_{a}^{H_j}\paren{\zeta_b^+(t,x)} : {a}_j < 0}_{j=1}^n.}
Applying the conclusions from the preceding paragraph, with $t'\in\R$ the composition 
\eqn{\zeta_{a}^+(t',\zeta_b^+(t,x))}
is classically differentiable with respect to both $t'$ and $t$ almost everywhere. 
Furthermore, we can deduce that the derivative of the composition with respect to $t$ is $F_{b}(\phi_b(t,x))$ when $t\in \paren{0,\nu_b^+(x)}$ and zero where it is otherwise defined;
similarly, the derivative with respect to $t'$ is $F_{a}\paren{\phi_{a}(t',\zeta_b^+(t,x)}$ when $t'\in\paren{0,t_{a}^+(t)}$ and zero where it is otherwise defined.
If we impose the relationship $t' := t - \tau_b^+(t,x)$, we have $t'=0$ for any $t\in (0,\nu_b^+(x))$.
The composition 
\eqn{\zeta_{a}^+(t - \tau_b^+(t,x),\zeta_b^+(t,x))}
follows the flow for $F_b$ from $x$ toward (but never passing) the exit boundary of $D_b$, then follows the flow of $F_{a}$ from $\zeta_b^+(t,x)$ toward the exit boundary of $D_{a}$.

In the sequel we will require the derivative of $\zeta_b^+$ with respect to $t$ and $x$.
In general this can be obtained via the chain rule~\cite[Theorem~3.1.1]{Scholtes2012}.
If we define $\nu_b^+:U_b\into\R$ as in~\eqref{eqn:nub} then we immediately conclude that for all $(t,x)\in\R\times U_b$ such that $\nu_b^+(x) \ne t \ne 0$, the forward--time flow--to--boundary $\zeta_b^+$ is classically differentiable and
\eqnn{\label{eqn:Dzetabp}
D\zeta_b^+(t,x) = \pw{(\zerod,\ \zerodxd),& t < 0; \\ (F_b(\phi_b(t,x)),\ D_x\phi_b(t,x)),& 0 < t < \nu_b^+(x); \\ (\zerod,\Upsilon(t,x)),& \nu_b^+(x) < t;}
}
where in the third case $\Upsilon(t,x) = {\paren{0, F_b(\phi_b(\tau_b^+(t,x),x)) D\tau_b^H(x) + D_x\phi_b(\tau_b^+(t,x),x)}}$ and $H\in\set{H_j}_{j=1}^n$ is such that $\tau_b^H(x) = \nu_b^+(x)$.
To compute $D\zeta_b^-(t,x)$, one may simply use the formula in~\eqref{eqn:Dzetabp} applied to the vector field $-F$;
full details are provided in \app{ftb}.

\subsubsection{Composite of budgeted time--to-- and flow--to--boundary}\label{sec:bttbftb}
Define $\vphi_b^+:\e{V}_b^+\into\R\times D$, $\vphi_b^-:\e{V}_b^-\into\R\times D$ by
\eqnn{\label{eqn:vphibp}
\forall (t,x)\in\e{V}_b^+ : \vphi_b^+(t,x) & = \paren{t - \tau_b^+(t,x), \zeta_b^+(t,x)} = \paren{t - \tau_b^+(t,x), \phi_b\paren{\tau_b^+(t,x),x}}, \\
\forall (t,x)\in\e{V}_b^- : \vphi_b^-(t,x) & = \paren{t - \tau_b^-(t,x), \zeta_b^-(t,x)} = \paren{t - \tau_b^-(t,x), \phi_b\paren{\tau_b^-(t,x),x}}.
}
Clearly $\vphi_b^+\in PC^r(\e{V}_b^+,\R\times D)$ and $\vphi_b^-\in PC^r(\e{V}_b^-,\R\times D)$.
Intuitively, the second component of the $\vphi_b^+$, $\vphi_b^-$ functions flow according to $F_b$ up to exit boundaries of $D_b$ in forward or backward time, respectively, while the first component deducts the flow time $t - \tau_b^\pm(t,x)$ from the total time budget $t$.
These functions satisfy an invariance property:
\eqnn{\label{eqn:vphi:inv}
\forall (t,x)\in\paren{\e{V}_b^+\cap (-\infty,0]\times U_b} &: \vphi_b^+(t,x) = (t,x), \\
\forall (t,x)\in\paren{\e{V}_b^-\cap [0,+\infty)\times U_b} &: \vphi_b^-(t,x) = (t,x). 
}

We now combine~\eqref{eqn:Dtaubp} and~\eqref{eqn:Dzetabp} to obtain the derivative of $\vphi_b^+$ for all $(t,x)\in\R\times U_b$ such that $\nu_b^+(x) \ne t \ne 0$:
\eqnn{\label{eqn:Dvphibp}
D\vphi_b^+(t,x) = \pw{\mat{cc}{1 & \zerod^\T \\ \zerod & I_d},& t < 0; \\ \mat{cc}{0 & \zerod^\T \\ F_b(\phi_b(t,x)) & D_x\phi_b(t,x)},& 0 < t < \nu_b^+(x); \\ \mat{cc}{1 & -D\tau_b^H(x) \\ \zerod & \Upsilon(t,x)},& \nu_b^+(x) < t;}
}
where in the third case $\Upsilon(t,x) = {F_b(\phi_b(\tau_b^+(t,x),x)) D\tau_b^H(x) + D_x\phi_b(\tau_b^+(t,x),x)}$ and $H\in\set{H_j}_{j=1}^n$ is such that $\tau_b^H(x) = \nu_b^+(x)$.
To compute $D\vphi_b^-(t,x)$, one may simply use the formula in~\eqref{eqn:Dzetabp} applied to the vector field $-F$;
full details are provided in \app{bttbftb}.

\subsubsection{Construction of flow via composition}
Consider now the formal composition
\eqnn{\label{eqn:phi}
\phi = \pi_2\circ\paren{\prod_{b=-\ones}^{+\ones} \vphi_{b}^+}\circ\paren{\prod_{b=+\ones}^{-\ones} \vphi_{b}^-}
}
where $\pi_2:\R\times D\into D$ is the canonical projection and $\prod_{b=-\ones}^{+\ones}$ denotes composition in lexicographic order (similarly $\prod_{b=+\ones}^{-\ones}$ denotes composition in reverse lexicographic order).
The set $\phi^{-1}(D)\subset\R\times D$ is open (since $\phi$ is continuous) and nonempty (since combining~\eqref{eqn:vphi:inv} and~\eqref{eqn:phi} implies $\phi(0,\rho) = \rho$).
Therefore there exist open neighborhoods $J\subset\R$ of $0$ and $V\subset D$ of $\rho$ such that $\e{F} = J\times V\subset\phi^{-1}(D)$.
Clearly $\phi\in PC^r(\e{F},D)$ since it is obtained by composing $PC^r$ functions.
Its derivative can be computed by applying the chain rule~\cite[Theorem~3.1.1]{Scholtes2012};
alternatively, it can be obtained for almost all $(t,x)\in\e{F}$ as a product of the appropriate matrices given in~\eqref{eqn:Dvphibp},~\eqref{eqn:Dvphibm}.
The derivative with respect to time has a particularly simple form almost everywhere, as we demonstrate in the following Lemma.

\begin{figure*}[t]
\centering
\includegraphics[width=13cm]{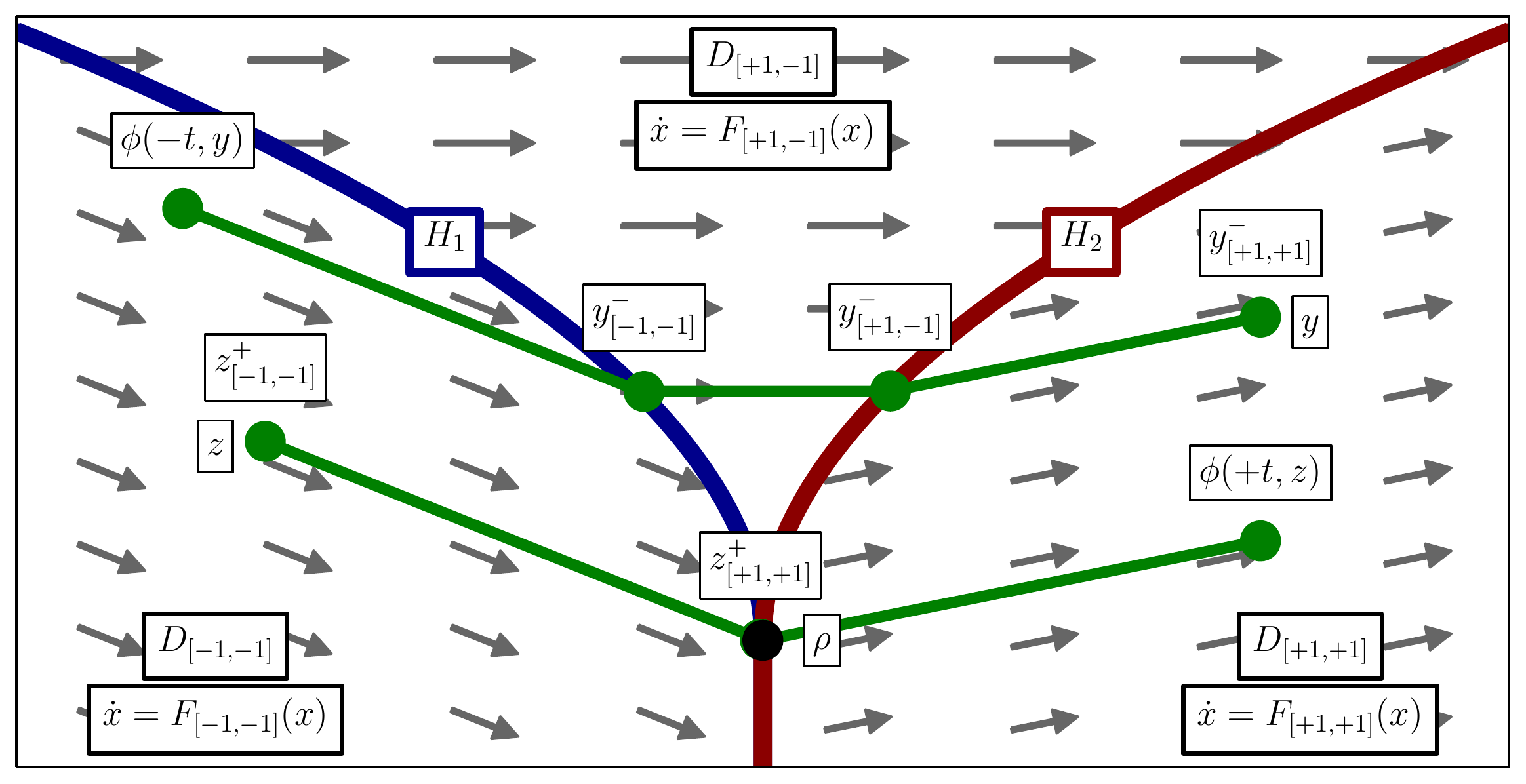}
\caption{\label{fig:seq}
Illustration of a vector field $F:D\into TD$ that is {\ECr} near $\rho\in D = \R^2$.
The vector field is discontinuous across the $C^r$ codimension--1 submanifolds $H_1,H_2\subset D$.
For each $b\in\cuben = \set{[-1,-1], [+1,-1], [-1,+1], [+1,+1]}$, if $\Int D_b\ne\emptyset$ then the vector field restricts as $F|_{\Int D_b} = F_b|_{\Int D_b}$ where $F_b:U_b\into TU_b$ is a smooth vector field over a neighborhood $\rho\in U_b\subset D$.
An initial condition $z\in D_{[-1,-1]}$ flows in forward time to $\phi(+t,y)\in D_{[+1,+1]}$ through $z_{[+1,+1]}^+\in H_1\cap H_2$.
An initial condition $y\in D_{[+1,+1]}$ flows in backward time to $\phi(-t,y)\in D_{[-1,-1]}$ through $y_{[-1,+1]}^-\in H_1$ and $y_{[-1,-1]}^-\in H_2$.
}
\end{figure*}

\begin{lemma}[time derivative of flow]
\label{lem:Dflow}
If the vector field $F:D\into TD$ is {\ECr} at $\rho\in D$, 
then
for almost all $(t,x)\in\e{F}$ 
the flow
$\phi\in PC^r(\e{F},D)$ defined by~\eqref{eqn:phi} is differentiable with respect to time and
\eqnn{\label{eqn:der}
D_t\phi(t,x) = F(\phi(t,x)).
}
\end{lemma}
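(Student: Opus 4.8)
The plan is to reduce the computation of $D_t\phi$ to a telescoping cancellation in the chain rule applied to the composition~\eqref{eqn:phi}, using the fact that along the flow exactly one factor in the product is "active" in the sense of contributing a nonzero time derivative, while all others act as (possibly time--budgeted) identities near the relevant point. First I would fix a point $(t,x)\in\e{F}$ lying off the finite union of the codimension--1 "bad sets" where some $\tau_b^{H_j}(\,\cdot\,)$ equals the running time budget or equals $0$; by the $PC^r$ structure and~\eqref{eqn:Dtaubp},~\eqref{eqn:Dzetabp},~\eqref{eqn:Dvphibp}, this bad set is a finite union of $C^{r-1}$ hypersurfaces (hence Lebesgue--null), and at points in its complement $\phi$ is classically differentiable in $t$, so it suffices to verify~\eqref{eqn:der} there.

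Next I would track the orbit: the composition $\phi = \pi_2\circ(\prod_{b=-\ones}^{+\ones}\vphi_b^+)\circ(\prod_{b=+\ones}^{-\ones}\vphi_b^-)$ threads the point $x$ through the backward--time factors (which by the invariance property~\eqref{eqn:vphi:inv} act as the identity on $[0,+\infty)\times U_b$, so for $t\ge 0$ only the forward block matters, and symmetrically for $t\le 0$) and then through the forward--time factors $\vphi_b^+$ in lexicographic order. For the forward block and $t>0$, I would argue that at our generic $(t,x)$ there is a unique index $b^\ast$ such that the partial composition reaching the $b^\ast$--th factor arrives at $(t^\ast,y^\ast)$ with $0<t^\ast<\nu_{b^\ast}^+(y^\ast)$; every factor before $b^\ast$ has already "used up" its segment and sits in its third (post--exit) regime where, crucially, the first component of $D\vphi_b^+$ is $(1,-D\tau_b^H)$ — i.e. it passes the time--increment through with coefficient $1$ — while every factor after $b^\ast$ is in its first regime $t<0$ where $D\vphi_b^+$ is the identity block. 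Multiplying the matrices in~\eqref{eqn:Dvphibp} (and~\eqref{eqn:Dvphibm} for the backward block), the "$1$"s in the first column telescope: the $\partial/\partial t$ input is transmitted undamped through all the pre--$b^\ast$ and post--$b^\ast$ factors, hits the middle regime of the $b^\ast$--th factor where it is converted to $F_{b^\ast}(\phi_{b^\ast}(t^\ast,y^\ast))$ in the state component, and since $y^\ast$ is by construction a point on the actual orbit lying in $\Int D_{b^\ast}$, we have $F_{b^\ast}(\phi_{b^\ast}(t^\ast,y^\ast)) = F(\phi(t,x))$, which is exactly~\eqref{eqn:der} after projecting by $\pi_2$.

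The main obstacle I expect is the bookkeeping of \emph{which} factor is active and the verification that the time--budget arithmetic $t' := t - \tau_b^+(t,x)$ composes correctly across the whole product — i.e. that the residual budget handed from one factor to the next is precisely the amount of time left to flow, so that the "active segment" hypothesis $0<t^\ast<\nu_{b^\ast}^+(y^\ast)$ holds for exactly one $b^\ast$ at a generic $(t,x)$ and the endpoint of the active segment lands in the interior of the corresponding region. This is essentially the observation already sketched in~\sct{ftb} for a two--factor composition ($\zeta_a^+(t-\tau_b^+(t,x),\zeta_b^+(t,x))$), extended inductively along the full lexicographic product; the boundary cases where the orbit passes through a multiply--intersecting point $H_{j_1}\cap\cdots\cap H_{j_k}$ are precisely the measure--zero times we have excluded, so they do not affect the "almost all" conclusion. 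Once the active--factor structure is pinned down, the matrix product collapses mechanically and~\eqref{eqn:der} drops out; I would present the forward ($t>0$) case in detail and obtain $t<0$ by the symmetric argument applied to $-F$, with $t$ exactly on a transition time handled by absolute continuity (the set of such $t$ being null).
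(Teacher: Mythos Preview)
Your proposal is correct and follows the same core idea as the paper's proof: identify, for generic $(t,x)$, the unique index $b^\ast$ whose factor is ``active'' (in the middle regime $0<t^\ast<\nu_{b^\ast}^+(y^\ast)$), and show that all other factors contribute trivially to $D_t\phi$, so that $D_t\phi(t,x)=F_{b^\ast}(\phi(t,x))=F(\phi(t,x))$.

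The execution differs slightly. The paper fixes $x$ and constructs an explicit partition $\{J_b\}_{b\in\cuben}$ of the time interval $[0,t^+)$ with the property that for $t\in\Int J_b$ one has the \emph{equality} $\phi(t,x)=\pi_2\circ\vphi_b^+(t-(t^+-t_b^+),x_b^+)$ with $x_b^+$ a constant; differentiating this single expression in $t$ immediately gives $F_b$, and the boundary points of the partition form a finite (hence null) set. Your approach instead pushes the tangent vector $(1,0)$ through the full chain of matrices~\eqref{eqn:Dvphibp}, arguing that the ``$1$'' in the time slot telescopes through the pre--$b^\ast$ (third--regime) and post--$b^\ast$ (identity) factors and is converted to $F_{b^\ast}$ at the active factor. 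This is valid, but requires a little extra care: the formula~\eqref{eqn:Dvphibp} is stated under the implicit assumption $\nu_b^+(x)>0$, and factors corresponding to regions the orbit never visits (where $\nu_b^+(y)\le 0$) fall outside its three listed cases---there $\vphi_b^+$ is locally the identity and one must verify separately that the tangent $(1,0)$ passes through unchanged. The paper's partition argument sidesteps this by working with the value of $\phi$ rather than its derivative, so the ``skipped'' regions simply correspond to degenerate (length--zero) intervals $J_b$. Either way, the bookkeeping you flag as the ``main obstacle'' is exactly what the partition $\{J_b\}$ formalizes.
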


\pf{
Choose $x\in D$ such that $(0,x)\in\e{F}$.  We will show that $\phi|_{\e{F}^x}$ is classically differentiable for almost all times $t\in\e{F}^x$.
Let $t^- = \inf\e{F}^x$, $t^+ = \sup\e{F}^x$ so that $0 \in \e{F}^x = (t^-,t^+)$.
We construct a partition of $[0,t^+)$ as follows.
For each $b\in\cuben$, let $(t_b^+,x_b^+) = \paren{\prod_{a < b} \vphi_{a}^+}(t^+,x)$ where the composition is over all $a\in\cuben$ that occur before $b$ lexicographically; refer to \fig{seq} for an illustration of the sequence $\set{y_b}_{b\in\cuben}$ generated by an initial condition $y\in D_{-\ones}$.
Note that $\set{t^+ - t_b^+}_{b\in\cuben}$ is (lexicographically) non--decreasing and $t_{+\ones}^+ = \tau_{+\ones}^+(t_{+\ones}^+,x_{+\ones}^+)$.
Defining the interval
\eqn{
J_b = [t^+ - t_b^+, t^+ - t_b^+ + \tau_b^+(t_b^+,x_b^+)],
}
we have $[0,t^+) \subset\bigcup_{b\in\cuben} J_b^+$ and $\Int J_a^+\cap\Int J_b^+ = \emptyset$ for all $a\in\cuben\sm\set{b}$.
Observe that 
\eqn{
\forall t\in\Int J_b^+ : \phi(t,x) = \pi_2\circ\vphi_b^+(t - (t^+-t_b^+),x_b^+) \in \Int D_b,
}
where the condition is vacuously satisfied if $\Int J_b^+ = \emptyset$.
Therefore for all $t\in\Int J_b^+$, the piecewise--differentiable function $\phi$ is classically differentiable with respect to time at $(t,x)$ and we have
\eqn{
D_t \phi(t,x) & = D\pi_2\, D_t \vphi_b^+(t - (t^+-t_b^+),x_b^+) \\
& = F_b(\pi_2\circ\vphi_b^+(t - (t^+-t_b^+),x_b^+)) \\
& = F(\pi_2\circ\vphi_b^+(t - (t^+-t_b^+),x_b^+)) \\ 
& = F(\phi(t,x)).
}
Applying an analogous argument in backward time, we conclude that $D_t \phi(t,x) = F(\phi(t,x))$ for almost all $t\in (t^-,t^+) = \e{F}^x$.
Since $(0,x)\in \e{F}$ was arbitrary, the Lemma follows.
}

\subsect{Piecewise--Differentiable Flow}\label{sec:flow}
We now show that the piecewise--differ-entiable function $\phi\in PC^r(\e{F},D)$ defined in~\eqref{eqn:phi} is in fact a flow for the discontinuous vector field $F$.
See \fig{seq} for an illustration of this flow.

\begin{theorem}[local flow]
\label{thm:flow}
Suppose 
the vector field $F:D\into TD$ 
is {\ECr} at $\rho\in D$.
Then there exists a flow $\phi:\e{F}\into D$ for $F$ over a flow domain $\e{F}\subset\R\times D$ containing $(0,\rho)$ such that $\phi\in PC^r(\e{F},D)$ and 
\eqnn{\label{eqn:flow}
\forall (t,x)\in\e{F} : \phi(t,x) = x + \int_0^t F(\phi(s,x))\, ds.
}
\end{theorem}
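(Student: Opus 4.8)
The plan is to assemble pieces already in hand rather than to prove something substantially new: the construction of \sct{const} furnishes the candidate map $\phi\in PC^r(\e{F},D)$ together with the domain $\e{F}$, and \lemDflow\ supplies its time derivative almost everywhere; it remains to (i) record that $\e{F}$ has the structure required of a flow domain, (ii) identify $\phi(0,\cdot)$ with the identity, (iii) upgrade the almost-everywhere differential identity of \lemDflow\ to the integral equation~\eqref{eqn:flow}, and (iv) note that (i)--(iii) are exactly what it means for $\phi$ to be a flow for $F$.

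For (i), the construction produces $\e{F}=J\times V$ with $J\subset\R$ an open interval containing $0$ and $V\subset D$ an open neighborhood of $\rho$; hence $\e{F}$ is open, contains $(0,\rho)$, satisfies $\e{F}\subset\pre{\phi}(D)$ (so $\phi$ takes values in $D$ throughout), and $\e{F}^x=J\times\set{x}$ is an open interval containing the origin for every $x\in V$. For (ii), I would chase $(0,x)$ through the composition~\eqref{eqn:phi}: by the invariance property~\eqref{eqn:vphi:inv}, each factor $\vphi_b^-$ fixes any pair whose time component lies in $[0,+\infty)$, and, once all of the $\vphi_b^-$ have acted, each factor $\vphi_b^+$ fixes any pair whose time component lies in $(-\infty,0]$. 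Since the time component of the running argument equals $0$ at every stage of the chain, after shrinking $V$ if necessary so that every intermediate spatial point remains in the relevant $\e{V}_b^\pm$, we obtain $\phi(0,x)=\pi_2(0,x)=x$ for all $x\in V$.

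For (iii), I would use that $\phi\in PC^r(\e{F},D)$ is locally Lipschitz (see \sct{pcrnsa}), so $\phi|_{\e{F}^x}$ is absolutely continuous on compact subintervals for each $x\in V$; moreover $s\mapsto F(\phi(s,x))$ is measurable and bounded on such subintervals, since $\phi(\cdot,x)$ maps a compact time interval to a compact subset of $D$ on which $F$ takes values in the union of the images of the finitely many $C^r$ vector fields $\set{F_b}_{b\in\cuben}$, hence it is integrable. The proof of \lemDflow\ establishes, for each fixed $x$ with $(0,x)\in\e{F}$, that $D_s\phi(s,x)=F(\phi(s,x))$ for almost every $s\in\e{F}^x$; applying the fundamental theorem of calculus for absolutely continuous functions on the interval between $0$ and $t$ then gives
\eqn{
\phi(t,x) = \phi(0,x) + \int_0^t D_s\phi(s,x)\,ds = x + \int_0^t F(\phi(s,x))\,ds,
}
which is~\eqref{eqn:flow}. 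For (iv), the facts collected above---$\e{F}$ an open flow domain containing $(0,\rho)$, each $\e{F}^x$ an open interval containing $0$, $\phi|_{\e{F}^x}$ absolutely continuous, and $D_t\phi(t,x)=F(\phi(t,x))$ for almost every $t\in\e{F}^x$---are precisely the conditions appearing in the definition of a flow for $F$, so together with $\phi\in PC^r(\e{F},D)$ the theorem follows.

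I do not expect a genuine obstacle here. The combinatorial heart of the matter---that the composition of budgeted flow-to-boundary maps genuinely traces the integral curve of the discontinuous vector field $F$, with the correct piecewise time derivative---has already been dispatched in the proof of \lemDflow. The only points that demand care are the two bookkeeping steps above: checking via~\eqref{eqn:vphi:inv} that $\phi(0,\cdot)$ is the identity on an entire neighborhood of $\rho$ and not merely at $\rho$, and invoking the $PC^r$ regularity correctly so that the fundamental theorem of calculus legitimately converts the almost-everywhere identity of \lemDflow\ into the integral equation~\eqref{eqn:flow}.
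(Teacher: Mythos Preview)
Your proposal is correct and follows essentially the same strategy as the paper: take the candidate $\phi$ from~\eqref{eqn:phi}, invoke \lemDflow\ for the almost-everywhere time derivative, and integrate. The one technical difference is in step~(iii): you appeal to the classical fundamental theorem of calculus for absolutely continuous functions (using that $PC^r$ implies locally Lipschitz), whereas the paper instead applies the Scholtes fundamental theorem of calculus for B--differentiable functions~\cite[Proposition~3.1.1]{Scholtes2012}, writing $\phi(t,x)-\phi(0,x)=\int_0^1 D\phi(tu,x;t,0)\,du$ and then using positive homogeneity of the B--derivative together with Lemma~\ref{lem:Dflow} to reduce to~\eqref{eqn:flow}. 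Your route is slightly more elementary and also makes explicit two points the paper leaves implicit---that $\e{F}$ has the structure of a flow domain and that $\phi(0,x)=x$ holds on a full neighborhood of $\rho$ via~\eqref{eqn:vphi:inv}---while the paper's route stays entirely within the $PC^r$ calculus it has already set up; either is perfectly adequate here.
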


\pf{
We claim that $\phi\in PC^r(\e{F},D)$ from~\eqref{eqn:phi} satisfies~\eqref{eqn:flow}.  
Applying the fundamental theorem of calculus~\cite[Proposition~3.1.1]{Scholtes2012} in conjunction with Lemma~\ref{lem:Dflow} and positive--homogeneity of the derivative~\eqref{eqn:der}, we find
\eqn{
\phi(t,x) & = \phi(0,x) + \int_0^1 D\phi(tu,x;t,0) du \\ 
& = x + \int_0^t D\phi(s,x;t,0) \frac{1}{t} ds \\
& = x + \int_0^t D_t \phi(s,x) ds \\
& = x + \int_0^t F(\phi(s,x)) ds.
}
}

If the vector field $F:D\into TD$ is {\ECr} at every point in the domain $D$, we may stitch together the local flows obtained from {\thmflow} to obtain a global flow.

\begin{corollary}[global flow]
\label{cor:flow}
If $F\in \ECrD$, then there exists a unique maximal flow $\phi\in PC^r(\e{F},D)$ for $F$.
This flow has the following properties:
\begin{enumerate}
\item[(a)] For each $x\in D$, the curve $\phi^x:\e{F}^x\into D$ is the unique maximal integral curve of $F$ starting at $x$.
\item[(b)] If $s\in\e{F}^x$, then $\e{F}^{\phi(s,x)} = \e{F}^x - s = \set{t - s : t\in\e{F}^x}$.
\item[(c)] For each $t\in\R$, the set $D_t = \set{x\in D : (t,x)\in\e{F}}$ is open in $D$ and $\phi_t:D_t\into D_{-t}$ is a piecewise--$C^r$ homeomorphism with inverse $\phi_{-t}$.
\end{enumerate}
\end{corollary}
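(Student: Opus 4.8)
The plan is to follow the template of the Fundamental Theorem on Flows~\cite[Theorem~9.12]{Lee2012}; the only genuinely new ingredient---and the main obstacle---is a \emph{uniqueness} statement for integral curves of the discontinuous field $F$, after which the construction of the maximal flow and the verification of (a)--(c) are essentially formal. So first I would establish the following local uniqueness claim: if $F$ is {\ECr} at $\rho$, with a neighbourhood $U$, event functions $\set{h_j}_{j=1}^n$, regions $\set{D_b}_{b\in\cuben}$ and $C^r$ extensions $\set{F_b}_{b\in\cuben}$ as in Definition~\ref{def:ecr}, then any integral curve $\xi:I\into U$ of $F$ satisfies $\xi(t)=\phi(t-t_0,\xi(t_0))$ for each $t_0\in I$, where $\phi$ is the local flow of {\thmflow} and both sides are defined. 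The argument: each $h_j\circ\xi$ is absolutely continuous with $\tfrac{d}{dt}(h_j\circ\xi)(t)=Dh_j(\xi(t))F(\xi(t))\ge f>0$ for almost every $t$, hence strictly increasing; therefore the sign of $h_j(\xi(t))-h_j(\rho)$ flips from $-1$ to $+1$ at most once in each coordinate, the curve visits the regions $D_b$ in a monotone cascade, and it crosses each $H_j$ transversally at most once. On each open time interval where $\xi(t)\in\Int D_b$ the curve solves the $C^r$ (hence locally Lipschitz) ODE $\dot\xi=F_b(\xi)$, so by classical ODE uniqueness it agrees with $\phi_b$ there, and the crossing times are pinned down by the implicit-function maps $\tau_b^{H_j}$ of~\sct{bttb}. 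Matching this picture against the composition~\eqref{eqn:phi}---using the invariance property~\eqref{eqn:vphi:inv} to absorb the factors $\vphi_b^{\pm}$ corresponding to regions that $\xi$ never enters---yields $\xi(t)=\phi(t-t_0,\xi(t_0))$; a standard connectedness argument (the locus where two integral curves agree is nonempty, open by the above, and closed by continuity) then promotes this to uniqueness of integral curves on all of $I$. I expect the careful bookkeeping here---ruling out a curve that dwells on, or chatters across, an event surface, and tracking which factors $\vphi_b^{\pm}$ act trivially for a given crossing order---to be the technical heart of the proof; the event-function hypothesis $Dh_jF\ge f>0$ is exactly what makes it go through.

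Given local uniqueness, I would construct the maximal flow as in the smooth case. For $x\in D$, let $\e{F}^x\subset\R$ be the union of the domains of all integral curves of $F$ starting at $x$; by the uniqueness claim these curves agree on overlaps, so they glue to a single integral curve $\phi^x:\e{F}^x\into D$, maximal by construction---this is (a)---and the assignment $\phi(t,x)=\phi^x(t)$ on $\e{F}=\bigcup_{x\in D}\e{F}^x\times\set{x}$ is the unique maximal flow (its uniqueness is inherited from uniqueness of maximal integral curves). Property (b) is the cocycle law: for $s\in\e{F}^x$ both $t\mapsto\phi(t+s,x)$ and $t\mapsto\phi(t,\phi(s,x))$ are integral curves of $F$ equal to $\phi(s,x)$ at $t=0$, hence are the same maximal integral curve, and equating their domains gives $\e{F}^{\phi(s,x)}=\e{F}^x-s$.

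Finally I would establish openness of $\e{F}$, the piecewise--$C^r$ regularity of $\phi$, and (c). Fix $(t_0,x_0)\in\e{F}$ with $t_0\ge 0$ (the case $t_0<0$ is symmetric, run in backward time). The arc $\set{\phi(t,x_0):0\le t\le t_0}$ is compact, so there are $0=s_0<\cdots<s_m=t_0$ and open sets $W_i$ with $\phi(s_i,x_0)\in W_i$ on which $F$ is {\ECr} and {\thmflow} supplies a local $PC^r$ flow; shrinking domains so that each step's image lands in the next chart, the group law expresses $\phi$ near $(t_0,x_0)$ as a finite composition of these local $PC^r$ flows, which is again $PC^r$ since $PC^r$ is closed under composition~\cite[\S4.1]{Scholtes2012}, and by the uniqueness claim this composition coincides with $\phi$. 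Hence $\phi\in PC^r(\e{F},D)$ and $\e{F}$ is open, so each $D_t=\set{x\in D:(t,x)\in\e{F}}$ is open; and by (b) we have $\phi_{-t}\circ\phi_t=\id_{D_t}$ and $\phi_t\circ\phi_{-t}=\id_{D_{-t}}$, so $\phi_t:D_t\into D_{-t}$ is a bijection with piecewise--$C^r$ (in particular continuous) inverse $\phi_{-t}$, i.e. a piecewise--$C^r$ homeomorphism, which is (c).
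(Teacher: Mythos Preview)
Your proposal is correct and follows essentially the same template as the paper's own proof (reproduced in Appendix~\ref{app:flow}), namely a direct adaptation of~\cite[Theorem~9.12]{Lee2012} with ``smooth'' replaced by ``$PC^r$''. Two minor differences are worth noting. First, you are more explicit than the paper about the local uniqueness of integral curves: the paper simply asserts that applying {\thmflow} near a common point forces two integral curves to agree on a subinterval, without spelling out why the constructed $\phi$ is the \emph{only} integral curve; your argument via monotonicity of $h_j\circ\xi$ and classical ODE uniqueness on each $\Int D_b$ fills exactly this gap. Second, for openness of $\e{F}$ and the $PC^r$ regularity of $\phi$, you use a compactness--finite--cover argument along the arc $\phi([0,t_0],x_0)$, whereas the paper uses the equivalent ``infimum of bad times'' contradiction argument; both are standard and yield the same conclusion.
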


\pf{
This follows from a straightforward modification of the analogous Theorem~9.12 in~\cite{Lee2012} (simply replace all occurrences of the word ``smooth'' with ``$PC^r$'').
We recapitulate the argument in 
\iftoggle{siads}
{
\app{flow} in the Supplemental Materials.
}
{
\app{flow}.
}
}

If a vector field is {\ECr} at every point along an integral curve, the following Lemma shows that it is actually $C^r$ at all but a finite number of points along the curve.

\begin{lemma}[$EC^r$ implies $C^r$ almost everywhere]
\label{lem:finite}
Suppose 
the vector field $F:D\into TD$ 
is {\ECr} at every point along an integral curve $\xi:I\into D$ for $F$ over a compact interval $I\subset\R$.
Then there exists a finite subset $\delta\subset\xi(I)$ such that $F$ is $C^r$ on $\xi(I)\sm\delta$.
\end{lemma}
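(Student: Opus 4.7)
The plan is to combine compactness with the transversality built into the definition of event--selected vector fields. At each point $\rho\in\xi(I)$, the \ECr\ property furnishes an open neighborhood $U_\rho\subset D$ and finitely many event functions $\{h_{\rho,j}\}_{j=1}^{n_\rho}\subset C^r(U_\rho,\R)$ whose zero--level sets $H_{\rho,j}=\{x\in U_\rho: h_{\rho,j}(x)=h_{\rho,j}(\rho)\}$ contain the only points at which $F|_{U_\rho}$ fails to be $C^r$: away from $\bigcup_j H_{\rho,j}$, the point lies in the interior of some region $D_b$ and $F$ coincides locally with the $C^r$ extension $F_b$. Since $\xi(I)$ is the continuous image of the compact interval $I$, it is compact, so I can extract a finite subcover $\{U_{\rho_i}\}_{i=1}^N$ of $\xi(I)$. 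The candidate exceptional set is then $\delta:=\xi(I)\cap\bigcup_{i=1}^N\bigcup_{j=1}^{n_{\rho_i}} H_{\rho_i,j}$, and what must be shown is that $\delta$ is finite.

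The key step is that transversality forces each event surface to be crossed only finitely many times by $\xi$. Fix indices $i,j$ and let $f>0$ be the lower bound from Definition~\ref{def:events} applied to $h_{\rho_i,j}$ on $U_{\rho_i}$. Because $\xi$ is absolutely continuous with $\dot\xi(t)=F(\xi(t))$ almost everywhere, the composition $g(t):=h_{\rho_i,j}(\xi(t))-h_{\rho_i,j}(\rho_i)$ is absolutely continuous on the open set $I_i:=\xi^{-1}(U_{\rho_i})\subset I$, and satisfies $\dot g(t)=Dh_{\rho_i,j}(\xi(t))\,F(\xi(t))\ge f>0$ for almost every $t\in I_i$. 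Hence $g$ is strictly increasing on each connected component of $I_i$, so it vanishes at most once per component. Since $I_i$ is an open subset of the compact interval $I$ covered by finitely many $\xi^{-1}(U_{\rho_k})$, standard Lebesgue--number-type reasoning bounds the number of components intersecting $\xi(I)\cap H_{\rho_i,j}$ by a finite number; taking the finite union over all $i$ and $j$ shows $\xi^{-1}(\delta)$ is finite, and therefore so is $\delta$.

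Finally I verify that $F$ is actually $C^r$ on $\xi(I)\setminus\delta$. If $x=\xi(t)\in\xi(I)\setminus\delta$, then $x\in U_{\rho_i}$ for some $i$ in the cover, and by construction $x\notin H_{\rho_i,j}$ for any $j$. Therefore $x$ lies in the open set $\Int D_b$ for the unique $b=\sgn(h_{\rho_i,1}(x)-h_{\rho_i,1}(\rho_i),\dots,h_{\rho_i,n_{\rho_i}}(x)-h_{\rho_i,n_{\rho_i}}(\rho_i))$ (shifting signum conventions as in~\eqref{eqn:sgn} if needed), and on this open neighborhood $F$ equals the $C^r$ extension $F_b$, proving $C^r$--regularity at $x$.

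The main obstacle is bookkeeping rather than analytical: points of multiple intersection, where several event surfaces meet simultaneously on $\xi$, must be counted only once, and I must be sure that the finitely--many components of $I_i$ carrying crossings do not accumulate as $i$ and $j$ vary. Both issues are handled by the compactness of $I$ together with the strict monotonicity of $g$, which guarantees isolated zeros and prevents accumulation.
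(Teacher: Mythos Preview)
Your constructive finite-cover approach differs from the paper's three-line contradiction: the paper lets $\delta$ be the set of non-$C^r$ points on $\xi(I)$, supposes $|\delta|=\infty$, extracts an accumulation point $\alpha\in\xi(I)$ by compactness, and uses the $EC^r$ structure at $\alpha$ (monotonicity of each $h_j\circ\xi$ near the time $\xi$ passes through $\alpha$) to conclude that $F$ is $C^r$ on $(B_\varepsilon(\alpha)\cap\xi(I))\setminus\{\alpha\}$, a contradiction. Both arguments rest on the same transversality fact; the paper's simply localizes it at one point rather than globalizing via a cover.

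Your Lebesgue--number step is not correct as written. The Lebesgue number guarantees that short subintervals of $I$ lie in \emph{some} $I_k$; it does not bound the number of connected components of a \emph{fixed} $I_i=\xi^{-1}(U_{\rho_i})$. Nothing in the hypotheses prevents $\xi$ from leaving and re-entering $U_{\rho_i}$ infinitely often, each visit giving one crossing of $H_{\rho_i,j}$, with the accumulation time $t^*$ lying on $\partial I_i$ where $h_{\rho_i,j}$ need not be defined---so monotonicity gives no contradiction there, and your set $\delta=\xi(I)\cap\bigcup_{i,j}H_{\rho_i,j}$ may be infinite even though the actual non-$C^r$ set is finite. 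Two easy fixes preserve your approach: (i) shrink each $U_{\rho_i}$ so that its closure lies in a larger open set on which the event functions and the bound $Dh_{\rho_i,j}\cdot F\ge f>0$ persist, forcing any accumulation of crossings into the domain of monotonicity; or (ii) use the Lebesgue number as intended by partitioning $I$ into finitely many closed subintervals $J_m$, each contained in some $I_{i(m)}$, and on each $J_m$ count at most $n_{\rho_{i(m)}}$ non-$C^r$ points (one per event surface of that chart) by strict monotonicity on $J_m$; summing over $m$ gives finiteness of the set of genuine non-$C^r$ points.
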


\pf{
Let $\delta\subset\xi(I)$ be the set of points where $F$ fails to be $C^r$.
If $\abs{\delta} = \infty$, then since $\xi(I)$ is compact 
there exists an accumulation point $\alpha\in\xi(I)$. 
Since $F$ is {\ECr} at $\alpha$, there exists $\veps > 0$ such that $F$ is $C^r$ at every point in the set $(B_\veps(\alpha)\cap\xi(I))\sm\set{\alpha}$, but this violates the existence of an accumulation point $\alpha\in\delta$.
Therefore $\abs{\delta} < \infty$.
}

\rem{
One of the major values of {\thmflow} lies in the fact that piecewise--differentiable functions possess a first--order approximation called the \emph{Bouligand derivative} as described in \sct{pcrnsa}.
This Bouligand derivative (or B--derivative) is weaker than the classical (Fr\'{e}chet) derivative, but significantly stronger%
\footnote{For instance, the B--derivative enables constructions like the Fundamental Theorem of Calculus~\cite[Proposition~3.1.1]{Scholtes2012} and Inverse Function Theorem~\cite[Corollary~20]{RalphScholtes1997} not enjoyed by the directional derivative.}
than the directional derivative.
The B--derivative of the composition~\eqref{eqn:phi} can be computed by applying the chain rule~\cite[Theorem 3.1.1]{Scholtes2012}.
}

\sect{Time--to--Impact, (Poincar\'{e}) Impact Map, and Flowbox}\label{sec:impact}

We now leverage the fact that {\ECr} vector fields yield piecewise--differentiable flows to obtain useful constructions familiar from classical (smooth) dynamical systems theory.
Using an inverse function theorem~\cite[Corollary~20]{RalphScholtes1997}, we construct time--to--impact maps for local sections in \sct{tti}.
We then apply this construction to infer the existence of piecewise--differentiable ({\Poincare}) impact maps associated with periodic orbits in \sct{Pmap} and piecewise--differentiable flowboxes in \sct{fbox}.

\subsection{Piecewise--Differentiable Time--to--Impact}\label{sec:tti} 

We begin in this section by constructing piecewise--differentiable time--to--impact maps.

\begin{theorem}[time--to--impact]
\label{thm:impact}
Suppose 
the vector field $F:D\into TD$ 
is {\ECr} at $\rho\in D$.
If $\sigma\in C^r(U,\R)$ is an event function for $F$ on an open neighborhood $U\subset D$ of $\rho$, 
then there exists an open neighborhood $V\subset D$ of $\rho$ and piecewise--differentiable function $\mu\in PC^r(V,\R)$ such that
\eqnn{\label{eqn:time}
\forall x\in V : \sigma\circ\phi(\mu(x),x) = \sigma(\rho)
}
where $\phi\in PC^r(\e{F},D)$ is a flow for $F$ and $(0,\rho)\in\e{F}$.
\end{theorem}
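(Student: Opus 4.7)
The plan is to reduce this to a piecewise-differentiable implicit function theorem. Define
\eqn{
g : \e{F} \into \R, \qquad g(t,x) = \sigma(\phi(t,x)) - \sigma(\rho).
}
Since $\sigma \in C^r(U,\R)$ and $\phi \in PC^r(\e{F},D)$ (by \thmflow), and $PC^r$ is closed under composition with $C^r$ maps \cite[\S4.1]{Scholtes2012}, we have $g \in PC^r(\e{F}',\R)$ on a suitable sub-neighborhood $\e{F}'$ of $(0,\rho)$ containing $\phi^{-1}(U)$. Also $g(0,\rho) = 0$. We seek $\mu \in PC^r(V,\R)$ with $\mu(\rho) = 0$ and $g(\mu(x),x) = 0$ for $x$ near $\rho$.

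The key step is to verify the hypothesis of the piecewise-differentiable implicit function theorem (derivable from \cite[Corollary~20]{RalphScholtes1997}), namely that the partial B-derivative $v \mapsto Dg(0,\rho;\,v,0)$ is a PC^r-homeomorphism of $\R$ onto $\R$. I would compute this B-derivative from the selection structure of $\phi$ constructed in \sct{const}: near $(0,\rho)$, for $t > 0$ the flow agrees with $\phi_{+\ones}(t,\rho)$ and for $t < 0$ it agrees with $\phi_{-\ones}(t,\rho)$, since at $\rho$ every event function attains its reference value and increases strictly along $F$. Hence by the chain rule \cite[Theorem~3.1.1]{Scholtes2012} together with \lemDflow,
\eqn{
Dg(0,\rho;\,v,0) = \pw{v\cdot D\sigma(\rho)F_{+\ones}(\rho), & v \ge 0, \\ v\cdot D\sigma(\rho)F_{-\ones}(\rho), & v \le 0.}
}
The event-function condition $D\sigma \cdot F_b \ge f > 0$ on $U$, applied to $b = \pm\ones$, makes both branches strictly positive in absolute value. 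Thus $v \mapsto Dg(0,\rho;\,v,0)$ is a piecewise-linear, strictly-increasing bijection of $\R$.

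With this nonsingularity verified, I would apply the piecewise-$C^r$ implicit function theorem to produce an open neighborhood $V$ of $\rho$ in $D$ and a unique $\mu \in PC^r(V,\R)$ satisfying $\mu(\rho) = 0$ and $g(\mu(x),x) \equiv 0$, which is precisely~\eqref{eqn:time}. Shrinking $V$ if necessary ensures $(\mu(x),x)\in\e{F}$ for all $x\in V$, which is automatic from continuity of $\mu$ and openness of $\e{F}$.

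The main obstacle is the nonsingularity computation: one must make precise the selection structure of $\phi$ at $(0,\rho)$ and confirm that the positively-homogeneous B-derivative genuinely is a piecewise-linear homeomorphism of $\R$. Once that is in hand, the nonsmooth implicit function theorem handles the rest exactly as in the classical case. I would also note that the hypothesis $\sigma$ being an event function for $F$ (rather than merely satisfying $D\sigma(\rho)F(\rho)\ne 0$ for a single extension) is precisely what supplies the uniform sign condition on every active selection branch, which is stronger than needed but conceptually the right assumption since the B-derivative is built from the branches $F_{\pm\ones}$.
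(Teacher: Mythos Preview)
Your approach is the paper's: set $g=\sigma\circ\phi-\sigma(\rho)$, note $g\in PC^r$, and invoke the $PC^r$ implicit function theorem of Ralph--Scholtes (their Corollary~20). The one imprecision is in the hypothesis you check. Corollary~20 requires that $g$ be \emph{completely coherently oriented} with respect to its first argument, meaning the scalar derivatives $D_t g_j(0,\rho)$ of \emph{all} essentially active selection functions $g_j$ at $(0,\rho)$ share a sign. This is not the same as the B--derivative $v\mapsto Dg(0,\rho;v,0)$ being a homeomorphism: the latter only probes the selection functions active along the ray $\{(sv,\rho):s>0\}$, whereas essentially active selection functions of $\phi$ at $(0,\rho)$ arise from perturbing $x$ as well as $t$, and those can land $\phi(t,x)$ in any $D_b$ with $\Int D_b\ne\emptyset$. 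Thus (as the paper records via \lemDflow) the relevant time--derivatives lie in $\{F_b(\rho):b\in\cuben,\ D_b\ne\emptyset\}$, not just $\{F_{\pm\ones}(\rho)\}$.

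Your closing remark therefore has it backwards: the uniform sign condition across \emph{every} branch is not ``stronger than needed'' but precisely the content of complete coherent orientation. Since the event--function hypothesis gives $D\sigma(\rho)F_b(\rho)\ge f>0$ for every such $b$ (by continuity of each $F_b$ and of $D\sigma$), the correct hypothesis is satisfied and your argument closes once this is stated properly.
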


\begin{proof}
{\thmflow} ensures the existence of a flow $\phi\in PC^r(\e{F},D)$ such that $\e{F}\subset\R\times D$ contains $(0,\rho)$.
Let $\alpha = \sigma\circ\phi$, and note that there exist open neighborhoods $T\subset\R$ of $0$ and $W\subset D$ of $\rho$ such that $\alpha\in PC^r(T\times W,\R)$. 

We aim to apply an implicit function theorem to show that $\alpha(s,x) = \sigma(\rho)$ has a unique piecewise--differentiable solution $s = \mu(x)$ near $(0,\rho)$.
To do so, we need to establish the function $\alpha$ is \emph{completely coherently oriented} with respect to its first argument.

Specializing Definition~16 in~\cite{RalphScholtes1997}, a sufficient condition for $\alpha$ to be completely coherently oriented with respect to its first argument at $(0,\rho)$ is that the (scalar) derivatives $D \alpha_j(0,\rho;1,0)$ of all essentially active selection functions $\set{\alpha_j : j\in I^e(\alpha,(0,\rho))}$ have the same sign.
Lemma~\ref{lem:Dflow} implies the time derivatives of all essentially active selection functions for $\phi$ at $(0,\rho)$ are contained in the collection $\set{F_b(\rho) : b\in\cuben, D_b\neq\emptyset}$
where $\set{F_b : b\in\cuben}$ are the $C^r$ vector fields that define $F$ near $\rho$.
Since $\sigma$ is an event function for $F$, there exists $f > 0$ such that
\eqn{
\forall b\in\cuben : D\sigma(\rho) F_b(\rho) \ge f > 0.
}
This implies $\alpha$ is completely coherently oriented with respect to time at $(0,\rho)$.
Therefore we may apply Corollary~20 in~\cite{RalphScholtes1997} to obtain an open neighborhood $0\in V\subset\R$ and a piecewise--differentiable function $\mu\in PC^r(V,\R)$ such that~\eqref{eqn:time} holds.
\end{proof}

\begin{corollary}[time--to--impact]
\label{cor:impact}
Suppose 
the vector field $F:D\into TD$ 
is {\ECr} at every point along an integral curve $\xi:[0,t]\into D$ for $F$.
If $\sigma\in C^r(U,\R)$ is an event function for $F$ on an open set $U\subset D$ containing $\xi(t)$, 
then there exists an open neighborhood $V\subset D$ of $\xi(0)$ and piecewise--differentiable function $\mu\in PC^r(V,\R)$ that satisfies~\eqref{eqn:time}.
\end{corollary}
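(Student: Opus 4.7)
The plan is to reduce to {\thmimpact} by applying it at the endpoint $\xi(t)$ and then pulling the resulting time--to--impact map back along the global flow to a neighborhood of $\xi(0)$.

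First I would invoke {\corflow} to obtain a maximal $PC^r$ flow $\phi\in PC^r(\e{F},D)$ for $F$. Since $\xi$ is an integral curve of $F$ on the compact interval $[0,t]$, part~(a) of {\corflow} identifies $\xi$ with the restriction of the maximal integral curve $\phi^{\xi(0)}$; in particular $(t,\xi(0))\in\e{F}$ and $\phi(t,\xi(0))=\xi(t)$. By part~(c), $\phi_t:D_t\into D_{-t}$ is a $PC^r$ homeomorphism on the open set $D_t\subset D$, which contains $\xi(0)$.

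Next I would apply {\thmimpact} at $\xi(t)$, where $F$ is $EC^r$ by hypothesis and $\sigma$ is an event function on a neighborhood: this yields an open neighborhood $W\subset U$ of $\xi(t)$ and a function $\mu'\in PC^r(W,\R)$ satisfying $\sigma\circ\phi(\mu'(y),y)=\sigma(\xi(t))$ for all $y\in W$. Setting $V:=\phi_t^{-1}(W)\subset D_t$, which is an open neighborhood of $\xi(0)$ by continuity of $\phi_t$, I would define
\[
\mu(x) \;:=\; t + \mu'\bigl(\phi(t,x)\bigr), \qquad x\in V,
\]
and note that $\mu\in PC^r(V,\R)$ since it is a composition of $PC^r$ maps. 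Invoking the group property of the flow, $\phi(s+t,x)=\phi(s,\phi(t,x))$ wherever both sides are defined (which follows from parts~(a)--(b) of {\corflow} via uniqueness of maximal integral curves), I would conclude that for each $x\in V$ with $y:=\phi(t,x)\in W$,
\[
\phi(\mu(x),x) \;=\; \phi\bigl(\mu'(y),\phi(t,x)\bigr) \;=\; \phi(\mu'(y),y),
\]
so $\sigma\circ\phi(\mu(x),x)=\sigma(\xi(t))$, verifying~\eqref{eqn:time} with $\rho$ taken to be $\xi(t)$.

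The only conceptual subtlety is confirming that the flow group law persists in the $PC^r$ setting despite the finitely many discontinuities of $F$ along $\xi([0,t])$ guaranteed by {\lemfinite}. This is exactly what {\corflow}(b) encodes: the flow domain transports under the flow, and uniqueness of maximal integral curves (part~(a)) then pins down the composition. Everything else is a direct assembly of results already established.
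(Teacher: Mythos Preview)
Your proposal is correct and follows essentially the same approach as the paper: apply {\thmimpact} at the endpoint $\xi(t)$ to obtain a local time--to--impact map, pull it back along the $PC^r$ flow $\phi_t$ to a neighborhood of $\xi(0)$, and define $\mu(x)=t+\mu'(\phi(t,x))$. The paper's proof is terser (it does not spell out the semigroup identity or invoke {\lemfinite}, the latter being unnecessary here), but the argument is the same.
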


\begin{proof}
Corollary~\ref{cor:flow} ensures the existence of a flow $\phi\in PC^r(\e{F},D)$ such that $\e{F}\subset\R\times D$ contains $[0,t]\times\set{\xi(0)}$.
Let $\td{\mu}\in PC^r(\td{V},\R)$ be the time--to--impact for $\sigma$ obtained by applying Corollary~\ref{thm:impact} at $\xi(t) = \phi(t,\xi(0))$.
Then with $V = \set{x\in D : \phi(t,x)\in\td{V}}$, noting that $V$ is nonempty since $\xi(0)\in V$ and open since $\phi$ is continuous, the function $\mu:V\into\R$ defined by $\mu(x) = t + \td{\mu}\circ\phi(t,x)$ is piecewise--$C^r$ and satisfies~\eqref{eqn:time}.
\end{proof}

\subsect{Piecewise--Differentiable (Poincar\'{e}) Impact Map}\label{sec:Pmap}

We now apply {\thmimpact} in the important case where the integral curve is a \emph{periodic orbit} to construct a piecewise--differentiable (Poincar\'{e}) impact map.

\defn{\label{def:gamma}
An integral curve $\gamma:\R\into D$ is a \emph{periodic orbit} for the vector field $F:D\into TD$ if there exists $t > 0$ such that $\gamma(t) = \gamma(0)$ and $D_t\gamma(s) \ne 0$ for all $s\in [0,t]$.
The minimal $t > 0$ for which $\gamma(t) = \gamma(0)$ is referred to as the \emph{period} of $\gamma$, and we say that $\gamma$ is a $t$--periodic orbit for $F$.
We let $\Gamma = \gamma(\R)$ denote the image of $\gamma$.
}

Suppose the vector field $F:D\into TD$ 
is {\ECr} at every point along a $t$--periodic orbit $\gamma$ for $F$.
Then given a {\localsec} $\Sigma\subset D$ for $F$ that intersects $\Gamma = \gamma(\R)$ at $\set{\fp} = \Gamma\cap\Sigma$,
Corollary~\ref{cor:impact} implies there exists a piecewise--differentiable time--to--impact $\mu\in PC^r(V,\R)$ defined over an open neighborhood $V\subset D$ of $\fp$ such that $\mu(\fp) = t$.
With $V\cap\Sigma$, we let $\ipct:V\into\Sigma$ be the piecewise--differentiable impact map defined by
\eqnn{\label{eqn:ipct}
\forall x\in V : \ipct(x) = \phi(\mu(x),x).
}

\begin{theorem}[Poincar\'{e} map]
\label{thm:pmap}
Suppose the vector field $F:D\into TD$ 
is {\ECr} at every point along a periodic orbit $\gamma$ for $F$.
Then given a {\localsec} $\Sigma\subset D$ for $F$ that intersects $\Gamma = \gamma(\R)$ at $\set{\fp} = \Gamma\cap\Sigma$,
there exists an open neighborhood $V\subset D$ of $\fp$ such that 
the impact map~\eqref{eqn:ipct} restricts to a piecewise--differentiable (Poincar\'{e}) map $P\in PC^r(S,\Sigma)$ on $S = V\cap\Sigma$.
\end{theorem}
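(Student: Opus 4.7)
The plan is to apply \corimpact{} at a point of the periodic orbit and verify that the resulting time-to-impact composition yields a $PC^r$ map taking values in $\Sigma$. Since $\Sigma$ is a local section for $F$, there is an event function $\sigma \in C^r(U,\R)$ on a neighborhood $U\subset D$ of $\fp$ such that $\sigma|_\Sigma = \sigma(\fp)$. Invoking \corimpact{} with this event function along the integral curve $\gamma|_{[0,t]}$, where $t$ is the period, yields an open neighborhood $V\subset D$ of $\fp$ and a function $\mu \in PC^r(V,\R)$ with $\mu(\fp)=t$ and $\sigma\circ\phi(\mu(x),x) = \sigma(\fp)$ for every $x \in V$.

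Next, I would define $\ipct(x) = \phi(\mu(x),x)$ as in~\eqref{eqn:ipct} and observe that, after shrinking $V$ so that $\{(\mu(x),x) : x\in V\}\subset\e{F}$, the map $x\mapsto(\mu(x),x)$ is piecewise-$C^r$ into $\e{F}$. Since $\phi \in PC^r(\e{F},D)$ by \corflow{} and $PC^r$ is closed under composition, we obtain $\ipct \in PC^r(V,D)$.

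The third step, which I anticipate to be the only nontrivial point, is to show that after further shrinking $V$ the image $\ipct(V)$ lies in $\Sigma$ rather than merely in the $\sigma$-level set. Because $\sigma$ is an event function, $D\sigma(\fp)\ne 0$, so $\sigma^{-1}(\sigma(\fp))$ is a $C^r$ codimension-1 submanifold near $\fp$; since $\Sigma$ is a codimension-1 embedded submanifold of the same dimension containing $\fp$ and contained in this level set, the two coincide on some neighborhood $W$ of $\fp$. Continuity of $\ipct$ and the fact that $\ipct(\fp) = \gamma(t) = \fp \in W$ allow us to shrink $V$ so that $\ipct(V)\subset W \subset \Sigma$.

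Finally, restrict $\ipct$ to $S = V\cap\Sigma$. In a $C^r$ chart on $\Sigma$ about $\fp$, the restriction $P := \ipct|_S$ is obtained from $\ipct$ by pre- and post-composition with $C^r$ maps (the inclusion $S\inc D$ and the chart on $\Sigma$), each of which preserves the $PC^r$ property: the selection functions $\{\ipct_j\}_{j\in\e{J}}$ for $\ipct$ restrict to $C^r$ selection functions for $P$ on $S$. Hence $P\in PC^r(S,\Sigma)$, as claimed. The remaining bookkeeping amounts to verifying that the shrinkings performed in steps one through three can be done simultaneously, which is straightforward since each step removes only a closed set avoiding $\fp$.
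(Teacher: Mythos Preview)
Your proof is correct and follows essentially the same route as the paper: apply the time-to-impact result along one period of $\gamma$ to obtain $\mu\in PC^r(V,\R)$, define $\ipct(x)=\phi(\mu(x),x)$, and restrict to $S=V\cap\Sigma$. You are in fact more careful than the paper on two points---using \corimpact{} (the correct statement for an integral curve over $[0,t]$) rather than \thmimpact{}, and explicitly verifying that the image of $\ipct$ lands in $\Sigma$ rather than merely in the $\sigma$-level set---both of which the paper's short proof elides.
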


\pf{
Without loss of generality assume $\gamma(0)\in\Sigma$.
Let $T$ be the period of $\gamma$, apply {\thmimpact} to $\gamma|_{[0,T]}$ to obtain an open set $V\subset D$ containing $\gamma(0)$ and a piecewise--$C^r$ impact time map $\mu\in PC^r(V,\R)$, and define $\ipct:V\into\Sigma$ as in~\eqref{eqn:ipct}.
Then with $S = V\cap\Sigma$, the restriction $P = \ipct|_S$ is a piecewise--$C^r$ Poincar\'{e} map for $\gamma$.
}

Since the Poincar\'{e} map $P:S\into\Sigma$ yielded by {\thmpmap} is piecewise--differentiable, it admits a first--order approximation (its Bouligand derivative) $DP:TS\into T\Sigma$ that can be used to assess local exponential stability of the fixed point $P(\fp) = \fp$.
This topic will be investigated in more detail in \sct{stab}.

\subsect{Piecewise--Differentiable Flowbox}\label{sec:fbox}

{\thmimpact} enables us to easily derive a canonical form for the flow near an event--selected vector field discontinuity.
\begin{theorem}[flowbox]
\label{thm:flowbox}
Suppose the vector field $F:D\into TD$ is {\ECr} at $\rho\in D$, and let $\phi:\e{F}\into D$ be the flow obtained from {\thmflow}.
Then there exists a piecewise--differentiable homeomorphism $\flbx\in PC^r(V,W)$ between neighborhoods $V\subset D$ of $\rho$ and $W\subset\R^d$ of $0$ such that 
\eqn{\forall x\in V, t\in\e{F}^x : \flbx\circ\phi(t,x) = \flbx(x) + t e_1}
where $e_1\in\R^d$ is the first standard Euclidean basis vector.
\end{theorem}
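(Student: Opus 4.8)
The plan is to build the flowbox chart $\flbx$ by ``spreading out'' the event surface that $\rho$ lies on (or, if $\rho$ lies on no event surface, any local section through $\rho$), then use the piecewise--differentiable time--to--impact map from {\thmimpact} as the first coordinate and a local diffeomorphism of the section as the remaining $d-1$ coordinates. Concretely, first I would pick an event function $\sigma\in C^r(U,\R)$ for $F$ near $\rho$ -- if $\rho$ lies on some $H_j$ take $\sigma = h_j$, otherwise take $\sigma(x) = Dh(\rho)(x - \rho)$ for a suitable covector, so that $D\sigma(\rho)F_b(\rho) > 0$ for every active selection vector field $F_b$ at $\rho$ (such a $\sigma$ exists because the finitely many vectors $F_b(\rho)$ all point to the same side of $T_\rho H$, by the event--function condition). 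Then {\thmimpact} yields an open $V_0\ni\rho$ and $\mu\in PC^r(V_0,\R)$ with $\sigma\circ\phi(\mu(x),x) = \sigma(\rho)$; set $\Sigma = \sigma^{-1}(\sigma(\rho))$ near $\rho$, a $C^r$ codimension--1 local section, and define the ``projection to the section'' $\pi_\Sigma(x) := \phi(\mu(x),x) \in \Sigma$, which is $PC^r$.

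Next I would fix a $C^r$ diffeomorphism $\psi_\Sigma:\Sigma\cap V_0 \into \R^{d-1}$ (an ordinary manifold chart on the local section $\Sigma$, normalized so $\psi_\Sigma(\pi_\Sigma(\rho)) = 0$), and define
\eqnn{\label{eqn:flbxdef}
\flbx(x) = \paren{-\mu(x),\ \psi_\Sigma\paren{\pi_\Sigma(x)}} \in \R\times\R^{d-1} = \R^d.
}
Here $-\mu(x)$ is the first coordinate because $\mu(x)$ is the time to \emph{reach} the section, so the flow decreases $\mu$ at unit rate: along an integral curve, $\mu(\phi(t,x)) = \mu(x) - t$ wherever both sides are defined, which is exactly the defining identity $\mu\circ\phi(t,x) = \mu(x) - t$ inherited from uniqueness of time--to--impact together with the cocycle property of $\phi$ in {\corflow}(b); and $\pi_\Sigma(\phi(t,x)) = \pi_\Sigma(x)$ since flowing first and then projecting to $\Sigma$ lands at the same point of $\Sigma$ by uniqueness of integral curves ({\corflow}(a)). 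These two facts give $\flbx\circ\phi(t,x) = (-\mu(x)+t,\ \psi_\Sigma(\pi_\Sigma(x))) = \flbx(x) + t e_1$, which is the claimed conjugacy.

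It remains to check that $\flbx$ is a $PC^r$ homeomorphism onto a neighborhood $W$ of $0$. That $\flbx\in PC^r(V,\R^d)$ is immediate: it is a composition and Cartesian product of the $PC^r$ map $\mu$, the $PC^r$ map $\pi_\Sigma = \phi(\mu(\cdot),\cdot)$, and the $C^r$ map $\psi_\Sigma$, and $PC^r$ is closed under these operations. For bijectivity onto an open set I would invoke the $PC^r$ inverse function theorem~\cite[Corollary~20]{RalphScholtes1997}: it suffices to show $D\flbx(\rho;\cdot)$ is a bijection of $\R^d$ (equivalently that all selection--function {\Jacobian}s at $\rho$ are ``coherently oriented''). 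Using Lemma~\ref{lem:Dflow} and~\eqref{eqn:DtaubH}, each active selection of $\pi_\Sigma$ at $\rho$ has {\Jacobian} equal to the classical ``projection along $F_b(\rho)$ onto $T_\rho\Sigma$'' map, composed with $D\psi_\Sigma$; paired with the row $-D\mu(\rho;\cdot)$ (whose active selections are $-D\sigma(\rho)/(D\sigma(\rho)F_b(\rho))$ times the obvious thing), the resulting $d\times d$ matrices are all invertible with the same sign determinant, because each is (a fixed change of basis applied to) the block form $\mat{c}{\text{time--to--}\Sigma\text{ row}\\ \text{projection onto }T_\rho\Sigma}$, which is exactly the classical flowbox change of variables for the smooth field $F_b$ and is a linear isomorphism. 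Coherent orientation of the whole $PC^r$ map then follows, so~\cite[Corollary~20]{RalphScholtes1997} gives a $PC^r$ local inverse, hence $\flbx$ is a $PC^r$ homeomorphism onto an open $W\ni 0$; shrinking $V$ if necessary so that $\e{F}^x$--translates stay inside $V$ completes the proof.

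I expect the main obstacle to be the verification that $\flbx$ is a local $PC^r$ homeomorphism -- specifically, assembling the coherent--orientation hypothesis of the $PC^r$ inverse function theorem from the individual smooth flowbox {\Jacobian}s, and being careful that the selection functions of the \emph{product} map $\flbx$ are genuinely controlled by those of $\mu$ and $\pi_\Sigma$ on a common neighborhood. The conjugacy identity itself is essentially bookkeeping with {\corflow}(a)--(b); the only subtlety there is making sure $\mu\circ\phi(t,x) = \mu(x) - t$ holds on a neighborhood and not merely at $\rho$, which one gets from the uniqueness clause in {\thmimpact} (the time--to--impact is the \emph{unique} $PC^r$ solution near $(0,\rho)$, and $t\mapsto\mu(x)-t$ solves the same equation along the orbit).
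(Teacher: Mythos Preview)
Your construction of $\flbx$ and your verification of the conjugacy identity $\flbx\circ\phi(t,x) = \flbx(x) + t e_1$ match the paper's argument essentially verbatim (the paper in fact takes $\sigma$ \emph{linear}, so that $\Sigma$ is an affine hyperplane and no extra chart $\psi_\Sigma$ is needed, but this is cosmetic).

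The genuine difference is in how you show $\flbx$ is a $PC^r$ homeomorphism onto an open set. The paper avoids the $PC^r$ inverse function theorem altogether: it argues injectivity of $\flbx$ directly (equal $\Sigma$--projection forces two points onto the same integral curve; distinct points on an integral curve have distinct impact times), invokes Brouwer's invariance of domain to conclude $W = \flbx(V)$ is open, and then simply observes that the inverse is the restriction $\phi\circ\iota|_W$ of the already--known $PC^r$ flow. This gives the $PC^r$ homeomorphism with no derivative bookkeeping at all.

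Your route through coherent orientation and~\cite[Corollary~20]{RalphScholtes1997} is workable in principle, but your sketch of the verification is not quite right. The essentially active selection functions of $\mu$ and of $\pi_\Sigma$ at $\rho$ are indexed by \emph{words} $\word\in\Words$ (the full sequences of regions a perturbed trajectory may traverse on its way to $\Sigma$), not by single labels $b\in\cuben$; so the Jacobians you must check are the saltation--matrix products $\Xi_\word$ of~\eqref{eqn:salt}, not ``the classical flowbox Jacobian for $F_b$''. You would need to argue that $\det\Xi_\word$ has a fixed sign across all $\word$, which is more than your proposal currently does. The paper's Brouwer--plus--explicit--inverse argument sidesteps precisely the obstacle you flagged as the main one.
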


\pf{
Let $\sigma\in C^r(U,\R)$ be an event function for $F$ on a neighborhood $\rho\in U\subset D$ that is linear\footnote{Existence of a linear event function is always guaranteed.  For instance, take the linear approximation at $\rho$ of any nonlinear event function for $F$}.
{\thmimpact} implies there exists a piecewise--differentiable time--to--impact map $\mu\in PC^r(V,\R)$ on a neighborhood $V\subset D$ of $\rho$ such that
\eqn{
\forall x\in V : \sigma\circ\phi(\mu(x),x) = \sigma(\rho),
}
i.e. $\phi(\mu(x),x)$ lies in the codimension--1 subspace $\Sigma = \sigma^{-1}(\sigma(\rho))$.
Define $\flbx:V\into\R\times\Sigma$ by 
\eqnn{\label{eqn:psi}
\forall x\in V : \flbx(x) = (-\mu(x), \phi(\mu(x),x)).
}
Clearly $\flbx\in PC^r(V,\R\times\Sigma)$ and hence $\flbx$ is continuous.
Furthermore, it is clear that $\flbx$ is injective since (i) $\pi_\Sigma\flbx(x) = \pi_\Sigma\flbx(y)$ implies $x$ and $y$ lie along the same integral curve, and (ii) distinct points along an integral curve pass through $\Sigma$ at distinct times. 
It follows from Brouwer's Open Mapping Theorem~\cite{Brouwer1911, Hatcher2002} that the image $W = \flbx(V)$ is an open subset of $\R^d$.
This implies $\flbx$ is a homeomorphism between $V$ and $W$.
With $\iota:\R\times\Sigma\into\R\times D$ denoting the canonical inclusion,
the inverse of $\flbx\in PC^r(V,W)$ is $\phi\circ\iota|_W\in PC^r(W,V)$,
thus $\flbx$ is a $PC^r$ homeomorphism.
Finally, using the semi--group property of the flow $\phi$ and the fact that $\mu\circ\phi(t,x) = \mu(x) - t$ for all $x\in V, t\in\e{F}^x$,
\eqn{
\forall x\in V,\ t\in\e{F}^x : \flbx\circ\phi(t,x) & = \paren{-\mu\circ\phi(t,x), \phi(\mu\circ\phi(t,x),\phi(t,x))} \\ 
& = \paren{t - \mu(x), \phi(\mu(x) - t, \phi(t,x))} \\
& = \paren{t - \mu(x), \phi(\mu(x),x)} \\
& = \flbx(x) + t e_1.
}
}

\noindent
Thus the flow is conjugate via a piecewise--differentiable homeomorphism to a \emph{flowbox}~\cite[\S11.2]{HirschSmale1974},~\cite[Theorem~9.22]{Lee2012}.

\sect{Perturbed Flow}\label{sec:pert}
In this section we study how the flow associated with an {\ECr} vector field varies under perturbations to both the smooth vector field components (in \sct{pertvf}) and the event functions (in \sct{pertevt}).

\subsection{Perturbation of Vector Fields}\label{sec:pertvf}
Suppose $F:D\into TD$ is {\EC{r}} at $\rho\in D$ with respect to the components of $h\in C^r(D,\R^n)$.
Then by Definition~\ref{def:ecr} there exists $U\subset D$ containing $\rho$ such that for each $b\in\cuben$ 
either $\Int{D_b} = \emptyset$ or $D_b\subset U$ and $F|_{\Int{D_b}}$ admits a $C^r$ extension $F_b:U\into TU$.
We note that $F$ is determined on $U$ up to a set of measure zero from $h$ and the function $\ha{F}\in\Ccoprod{U}$ defined by $\ha{F}|_{\set{b}\times U} = F_b|_U$.
Note that we regard $\Ccoprod{U}$ as a vector space under pointwise addition of tangent vectors and the norm
\eqnn{\label{eqn:Ccoprod}
\norma{C^r}{\ha{F}} = \sum_{b\in\cuben}\norma{C^r}{\ha{F}|_{\set{b}\times U}}.
}
Thus in the sequel we consider perturbations to {\ECr} vector fields in the space $\Ccoprod{U}$.

\begin{theorem}[vector field perturbation]
\label{thm:ss1}
Let $F\in\Ccoprod{D}$, $h\in C^r(D,\R^n)$ determine an {\EC{r}} vector field at $\rho\in D$, $r \ge 1$.
Then for all $\veps > 0$ there exists $\delta > 0$ such that
for all $\td{F}\in \BC{r}{\delta}(F)$: 
\begin{enumerate}[label=(\alph*)]
\item pairing $h$ with the perturbed vector field $\td{F}$ determines an {\EC{r}} vector field at $\rho$;
\item the perturbed flow $\td{\phi}:\td{\e{F}}\into D$ obtained by applying {\thmflow} to this perturbed vector field satisfies $\td{\phi}\in\BC{0}{\veps}(\phi)$ on $\td{\e{F}}\cap\e{F}$ and $(0,\rho)\in \td{\e{F}}\cap\e{F}$;
\item there exists a piecewise--differentiable homeomorphism $\eta\in PC^r(U,\td{U})$ defined between neighborhoods $U,\td{U}\subset D$ of $\rho$ such that 
$\eta|_{B_\delta(\rho)}\in\BC{0}{\veps}(\id_{B_\delta(\rho)})$ and
we have
\eqnn{
\eta\circ\phi(t,x) = \td{\phi}(t,\eta(x))
}
for all $(t,x) \in \R\times\R^d$ such that $x\in U$, $t\in \e{F}^x\cap \td{\e{F}}^{\eta(x)}$, and $\phi(t,x)\in U$.
\end{enumerate}
\end{theorem}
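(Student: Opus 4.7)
The plan is to tackle the three parts in order, leaning heavily on the constructive flow definition~\eqref{eqn:phi} and the flowbox {\thmflowbox} so that the machinery does most of the work.

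For part (a), I would exploit the uniform transversality built into Definition~\ref{def:events}. Fix constants $f>0$ and the extensions $F_b\in C^r(U,TU)$ witnessing the $EC^r$ property of $F$ at $\rho$. Because $Dh_j(x) F_b(x)\ge f$ uniformly on $U$, any perturbation $\td{F}$ with $\norma{C^r}{\td{F}-F}<\delta$ gives extensions $\td{F}_b$ with $\sup_{x\in U}\abs{Dh_j(x)(\td{F}_b(x)-F_b(x))}$ proportional to $\delta$; choosing $\delta$ small relative to $f$ preserves the event condition $Dh_j(x)\td{F}_b(x)\ge f/2>0$. Since the event functions $h$ and regions $D_b$ are unchanged, and the perturbed $\td{F}_b$ serve as $C^r$ extensions of $\td{F}|_{\Int D_b}$, this establishes the $EC^r$ property for $(\td{F},h)$ at $\rho$ after possibly shrinking $U$.

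For part (b), I would chase the construction in \sct{const} through a perturbation estimate. Classical smooth ODE theory gives continuous dependence of each smooth flow $\phi_b$ on $F_b$ in $C^0$ on precompact subsets of $\e{F}_b$; the implicit-function formula~\eqref{eqn:DtaubH} together with uniform transversality gives continuous dependence of each impact time $\tau_b^{H_j}$ on the data; pointwise min/max preserves this, so $\tau_b^\pm$ and $\zeta_b^\pm$, hence $\vphi_b^\pm$, are $C^0$-close to their perturbed counterparts on a common neighborhood of $(0,\rho)$. Since $\phi$ is the finite lexicographic composition~\eqref{eqn:phi} of such maps and composition is $C^0$-continuous on precompact sets, I can choose $\delta$ small enough that $\td{\phi}$ is defined on a common flow domain containing $(0,\rho)$ and satisfies $\sup\norm{\td{\phi}-\phi}<\veps$.

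For part (c), I would build the conjugacy via two flowboxes based on a common linear event function $\sigma$ at $\rho$ (noting $\sigma$ is an event function for both $F$ and $\td{F}$ by part (a)). {\thmflowbox} supplies $\flbx\in PC^r(V,W)$ for $F$ and $\td{\flbx}\in PC^r(\td{V},\td{W})$ for $\td{F}$, both of the form~\eqref{eqn:psi}. Define
\eqn{
\eta := \td{\flbx}^{-1}\circ \flbx
}
on a small enough neighborhood $U$ of $\rho$ where both sides make sense; this is a composition of $PC^r$ homeomorphisms, hence $\eta\in PC^r(U,\td{U})$. The conjugacy relation is a one-line check: applying $\flbx$ to $\phi(t,x)$ yields $\flbx(x)+te_1$, which is also $\td{\flbx}(\eta(x))+te_1=\td{\flbx}(\td{\phi}(t,\eta(x)))$, so $\eta\circ\phi(t,x)=\td{\phi}(t,\eta(x))$ wherever both sides are defined. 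Closeness to the identity follows because $\td{\mu}$ and $\td{\phi}(\td{\mu}(\cdot),\cdot)$ are $C^0$-close to their unperturbed counterparts (the perturbation estimates from part (b) together with continuity of the implicit function in {\thmimpact} applied to $\td{\alpha}=\sigma\circ\td{\phi}$), so $\td{\flbx}\to\flbx$ and hence $\td{\flbx}^{-1}\circ\flbx\to\id$ in $C^0$ on a compact neighborhood of $\rho$, allowing us to take $\delta$ small enough to enforce $\eta|_{B_\delta(\rho)}\in\BC{0}{\veps}(\id)$.

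The main obstacle I expect is bookkeeping domains and keeping the perturbation estimates uniform: one must shrink $U$ and the time interval several times (once for uniform transversality, once so that each $\phi_b$ remains defined over the relevant forward/backward time budget for both $F$ and $\td{F}$, and once more so that $\td{\flbx}^{-1}$ is defined on the image of $\flbx$). None of these are deep, but they must be performed carefully and in the right order; the semigroup/conjugation computation itself is essentially automatic once the flowbox framework is in place.
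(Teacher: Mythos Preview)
Your proposal is correct and, for parts (a) and (c), essentially identical to the paper's proof: the transversality margin argument for (a) and the conjugacy $\eta=\td{\flbx}^{-1}\circ\flbx$ built from two flowboxes for (c) are exactly what the paper does.

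The genuine difference is in part (b). You track the perturbation through the explicit composition~\eqref{eqn:phi}: continuous dependence of each smooth $\phi_b$ on $F_b$, of each $\tau_b^{H_j}$ via~\eqref{eqn:DtaubH}, stability under $\min/\max$, and finally under composition. The paper instead packages $F$ and $\td F$ as Filippov differential inclusions $\obar{F},\obar{\td F}$ via the convex hull~\eqref{eqn:setval}, checks the basic conditions, and invokes Filippov's perturbation theorem~\cite[Chapter~2, \S8, Theorem~1]{Filippov1988} (reproduced as {\thminc}) to get $\td\phi\in\BC{0}{\veps}(\phi)$ in one stroke. Your route is more self--contained and reuses the paper's own construction, at the cost of the domain bookkeeping you flag; the paper's route is shorter but imports a black box and requires verifying that solutions of the inclusion coincide with those of the $EC^r$ equation.

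One point in (c) deserves a sharper justification than you give: passing from $\td\flbx$ $C^0$--close to $\flbx$ to $\td\flbx^{-1}$ $C^0$--close to $\flbx^{-1}$ is not automatic for general homeomorphisms. The paper handles this by invoking the quantitative $PC^r$ inverse/implicit function results~\cite[Lemma~9, Theorem~11]{RalphScholtes1997}, which give exactly this stability of the inverse under $C^r$--small perturbations of the data. You should cite the same (or, alternatively, observe directly that $\flbx^{-1}=\phi\circ\iota$ and $\td\flbx^{-1}=\td\phi\circ\iota$, so the inverse closeness follows from part~(b)).
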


\begin{proof}
Since $F$ is {\ECr} with respect to $h$ at $\rho$, there exists $f > 0$ such that for all $x$ sufficiently close to $\rho$ every component of $Dh(x) F(x)$ is bounded below by $f$.
Then so long as $0 < \delta < f$, every component of $Dh(x) \td{F}(x)$ is bounded below by $f - \delta > 0$, establishing claim (a). 

We claim that (b) follows from~\cite[Theorem~1 in \S8 of Chapter~2]{Filippov1988}, which we reproduce as {\thminc} in 
\iftoggle{siads}
{
\app{filippov} in the Supplemental Materials.
}
{
\app{filippov}.
}
Indeed, given any $G\in C^r(\coprod_{b\in\cuben} D, \coprod_{b\in\cuben} TD)$ for which $(G,h)$ determines an {\ECr} vector field, define a set--valued map $\obar{G}:D\into \pset{TD}$ as follows:
\eqnn{\label{eqn:setval}
\forall x\in D : \obar{G}(x) = \conv\set{G|_{\set{b}\times D}(x) : b\in\cuben,\ x\in D_b}.
}
At any $x\in D$, it is clear that $\obar{G}(x)$ is nonempty, bounded, closed, and convex.
Furthermore, it is clear that $\obar{G}$ is upper semicontinuous at $x$ in the sense defined in \sct{pcrnsa}.
Therefore the map $\obar{G}$ satisfies {\assinc} over the domain of the flow for $G$.
It is straightforward to verify that solutions to the differential inclusion
$\dot{x} \in \obar{G}(x)$ coincide with those of the differential equation $\dot{x} = G(x)$ since the derivatives of the (absolutely continuous) solution functions agree almost everywhere.
Claim (b) then follows by applying {\thminc} to $\obar{F}$ determined from $F$ by~\eqref{eqn:setval} and $\obar{\td{F}}$ determined from $\td{F}\in\BC{r}{\delta}(F)$ by~\eqref{eqn:setval}.

For claim (c), apply {\thmflowbox} to $\phi$ and $\td{\phi}$ to obtain $\flbx\in PC^r(V,W)$ and $\td{\flbx}\in PC^r(\td{V},\td{W})$ such that
\eqnn{
\forall x\in V\cap\td{V}, t\in\e{F}^x\cap \td{\e{F}}^x : \flbx\circ\phi(t,x) = \flbx(x) + t\,e_1,\ \td{\flbx}\circ\td{\phi}(t,x) = \td{\flbx}(x) + t\,e_1.
}
Then with $U = \flbx^{-1}(\td{W})$, $\td{U} = \td{\flbx}^{-1}\circ\flbx(U)$ (both sets are nonempty since $\rho\in U\cap\td{U}$ and open since $\flbx$ and $\td{\flbx}$ are homeomorphisms), the piecewise--differentiable homeomorphism $\eta = \td{\flbx}^{-1}\circ\flbx|_{U}\in PC^r(U,\td{U})$ provides conjugacy between $\phi$ and $\td{\phi}$
for all $(t,x) \in \R\times\R^d$ such that $x\in U$, $t\in \e{F}^x\cap \td{\e{F}}^{\eta(x)}$, and $\phi(t,x)\in U$:
\eqnn{
\eta\circ\phi(t,x) & = \td{\flbx}^{-1}\circ\flbx\circ\phi(t,x) \\ 
& = \td{\flbx}^{-1}\paren{\flbx(x) + t\,e_1} \\
& = \td{\flbx}^{-1}\paren{\td{\flbx}\circ\td{\flbx}^{-1}\circ\flbx(x) + t\,e_1} \\
& = \td{\flbx}^{-1}\paren{\td{\flbx}\circ\eta(x) + t\,e_1} \\
& = \td{\phi}(t,\eta(x)).
}
We now wish to choose $\delta > 0$ sufficiently small to ensure 
$\eta|_{B_\delta(\rho)}\in\BC{0}{\veps}(\id_{B_\delta(\rho)})$.
Recalling from~\eqref{eqn:psi},
\eqnn{
\forall x\in V : \flbx(x) = (-\mu(x), \phi(\mu(x),x)),
}
where $\mu\in PC^r(V,\R)$ is the time--to--impact map for the event surface used to define $\flbx$,
we have
\eqnn{\label{eqn:psitd}
\norm{\flbx(x) - \td{\flbx}(x)} 
& \le \abs{\mu(x) - \td{\mu}(x)} + \norm{\phi(\mu(x),x) - \td{\phi}(\td{\mu}(x),x)} \\
& \le \abs{\mu(x) - \td{\mu}(x)} + \norm{\phi(\mu(x),x) - \phi(\td{\mu}(x),x)} \\
& \quad + \norm{\phi(\td{\mu}(x),x) - \td{\phi}(\td{\mu}(x),x)} \\
& \le (1 + L_\phi)\abs{\mu(x) - \td{\mu}(x)} + \veps_\phi
}
where $L_\phi > 0$ is a Lipschitz constant for $\phi$ on $\obar{B_\delta(0,\rho)}$, claim (b) ensures $\td{\phi}\in\BC{0}{\veps_\phi}(\phi)$ for any desired $\veps_\phi > 0$, and we have restricted to $x\in V\cap\td{V}\cap B_\delta(\rho)$ for which $(\td{\mu}(x),x)\in\e{F}$ and $(\mu(x),x)\in\td{\e{F}}$.
Applying~\cite[Lemma~9, Theorem~11]{RalphScholtes1997} to $\mu$, we conclude that $\delta > 0$ can be chosen sufficiently small to ensure $\td{\mu}\in\BC{0}{\veps_\mu}(\mu)$ for any desired $\veps_\mu > 0$.
Therefore $(1 + L_\phi)\veps_\mu + \veps_\phi$ can be made arbitrarily small in~\eqref{eqn:psitd}, hence we may apply~\cite[Theorem~11]{RalphScholtes1997} to choose $\delta > 0$ sufficiently small to ensure $\td{\flbx}^{-1}\in\BC{0}{\veps}(\flbx^{-1})$ for any desired $\veps > 0$.
Thus $\delta > 0$ may be chosen sufficiently small to ensure $B_\delta(\rho)\subset U$ and
\eqnn{
\norm{\eta(x) - x} 
& = \norm{\td{\flbx}^{-1}\circ\flbx(x) - \flbx^{-1}\circ\flbx(x)} \le \veps,
}
whence $\eta|_{B_\delta(\rho)}\in\BC{0}{\veps}(\id_{B_\delta(\rho)})$.
This completes the proof of claim (c).
\end{proof}

\subsect{Perturbation of Event Functions}\label{sec:pertevt}
It is a well--known fact that the solution of $n$ equations in $n$ unknowns generically varies continuously with variations in the equations.
This observation provides a basis for studying structural stability of the flow associated with {\ECr} vector fields when there are exactly $n = d = \dim D$ event functions, since for a collection of event functions $\set{h_j}_{j=1}^d\subset C^r(D,\R)$ whose composite $h\in C^r(D,\R^d)$ satisfies $\det Dh(\rho)\ne 0$, the existence of a unique intersection point $\td{\rho}$ and the set of possible transition sequences undertaken by nearby trajectories are unaffected by a sufficiently small perturbation $\td{h}$ of $h$.
We now combine this observation with the previous Theorem.
Subsequently, we will present an embedding technique that enables immediate generalization to cases where $Dh(\rho)$ is not invertible (whether because $n < d$, $n > d$, or $n = d$ and $\det Dh(\rho) = 0$).

\begin{theorem}[event function perturbation]
\label{thm:ss2}
Let $F\in\Ccoprod{D}$, $h\in C^r(D,\R^d)$ determine an {\EC{r}} vector field at $\rho\in D$ and suppose $Dh(\rho)$ is invertible, $r \ge 1$.
Then for all $\veps > 0$ sufficiently small there exists $\delta > 0$ such that
for all $\td{F}\in \BC{r}{\delta}(F)$, $\td{h}\in \BC{r}{\delta}(h)$: 
\begin{enumerate}[label=(\alph*)]
\item there exists a unique $\td{\rho}\in B_\delta(\rho)$ such that $\td{h}(\td{\rho}) = 0$ and $\td{h}(x)\ne 0$ for all $x\in B_\delta(\rho)\sm\set{\td{\rho}}$;
\item pairing $\td{h}$ with the perturbed vector field $\td{F}$ determines an {\EC{r}} vector field at $\td{\rho}$;
\item the perturbed flow yielded by {\thmflow}, $\td{\phi}:\td{\e{F}}\into D$, satisfies $\td{\phi}\in\BC{0}{\veps}(\phi)$ on $\td{\e{F}}\cap\e{F} \ne \emptyset$;
\item there exists a piecewise--differentiable homeomorphism $\eta\in PC^r(U,\td{U})$ defined between neighborhoods $U,\td{U}\subset D$ containing $\set{\rho,\td{\rho}}$ such that 
$\eta|_{B_\delta(\rho)}\in\BC{0}{\veps}(\id_{B_\delta(\rho)})$ and
we have
\eqnn{
\eta\circ\phi(t,x) = \td{\phi}(t,\eta(x))
}
for all $(t,x) \in \R\times\R^d$ such that $x\in U$, $t\in \e{F}^x\cap \td{\e{F}}^{\eta(x)}$, and $\phi(t,x)\in U$.
\end{enumerate}
\end{theorem}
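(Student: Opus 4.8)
The plan is to track the proof of \thmpertvf{}, the single new phenomenon being that perturbing the event functions also perturbs the regions $D_b$, a difficulty I will confine to part~(c). After subtracting a constant we may assume $h(\rho)=0$, so $D_b=\set{x:b_jh_j(x)\ge 0}$, and once part~(a) produces $\td\rho$ with $\td h(\td\rho)=0$ the perturbed regions are $\td D_b=\set{x:b_j\td h_j(x)\ge 0}$. Part~(a) is the inverse function theorem: since $Dh(\rho)$ is invertible, $h$ restricts to a $C^r$--diffeomorphism of a neighborhood $W\ni\rho$ onto an open set containing $0$, and this persists under $C^1$--small perturbation, so for $\delta$ small $\td h|_W$ is a $C^r$--diffeomorphism with $\td\rho:=(\td h|_W)^{-1}(0)$ its unique zero and $\td\rho\to\rho$ as $\delta\to 0$; shrinking $\delta$ so $\td\rho\in B_\delta(\rho)\subset W$ gives~(a). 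Part~(b) is a direct check: because $h_j$ is an event function for $F$ there is $f>0$ with $Dh_j(x)F_b(x)\ge f$ for $x$ near $\rho$, and $C^1$--closeness of $(\td h,\td F)$ to $(h,F)$ together with $\td\rho\to\rho$ forces $D\td h_j(x)\td F_b(x)\ge f/2>0$ near $\td\rho$; the $C^r$ extension of $\td F|_{\Int\td D_b}$ demanded by Definition~\ref{def:ecr} is simply $\td F_b=\td F|_{\set{b}\times D}$, already $C^r$ on $D$.

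For part~(c) I would mirror \thmpertvf{}(b) and pass to differential inclusions $\obar F(x)=\conv\set{F_b(x):x\in D_b}$ and $\obar{\td F}(x)=\conv\set{\td F_b(x):x\in\td D_b}$ (regions from $h$, respectively $\td h$). Both satisfy \assinc{}, and, exactly as in \thmpertvf{}, their absolutely continuous solutions agree a.e.\ with the integral curves of the corresponding \ECr{} vector fields (on region interiors the inclusions are single--valued, and transversality keeps the switching set to isolated times). The one estimate not already furnished by \thmpertvf{} is that $\obar{\td F}$ sits inside an arbitrarily small state--and--value fattening of $\obar F$ near $\rho$: if $x\in\td D_b$ then $b_jh_j(x)\ge-\norma{C^0}{\td h-h}>-\delta$ for every $j$, and since $Dh(\rho)$ is invertible $h$ is bi--Lipschitz near $\rho$ with $h(D_b\cap W)=h(W)\cap\set{y:b_jy_j\ge 0}$, so there is $x'\in D_b$ with $\norm{x-x'}\le C\delta$ and hence $\td F_b(x)$ within $O(\delta)$ of $F_b(x')\in\obar F(x')$. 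Feeding $\obar F$ and $\obar{\td F}$ to \thminc{} then yields $\td\phi\in\BC{0}{\veps}(\phi)$ on $\e F\cap\td{\e F}$, which contains $(0,\rho)$ for $\delta$ small, proving~(c). (One may alternatively absorb the region perturbation into the $C^r$--diffeomorphism $h^{-1}\circ\td h$, which carries $\td\rho$ to $\rho$ and makes the two systems share the event functions $h$, after which only the value fattening remains.)

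Part~(d) is the flowbox argument of \thmpertvf{}(c), essentially verbatim. Apply \thmflowbox{} to $\phi$ and to $\td\phi$ using a single linear event function $\sigma$ --- the linear approximation of $h_1$ at $\rho$, say --- which by $C^1$--closeness is an event function for $F$ near $\rho$ and for $\td F$ near $\td\rho$, and arrange both flowboxes to target a common section $\Sigma\ni\rho$; set $\eta:=\td\flbx^{-1}\circ\flbx$. The conjugacy $\eta\circ\phi(t,x)=\td\phi(t,\eta(x))$ then follows from the same computation as in \thmpertvf{}(c). For $\eta|_{B_\delta(\rho)}\in\BC{0}{\veps}(\id)$, bound $\norma{C^0}{\flbx-\td\flbx}$ from $\flbx(x)=(-\mu(x),\phi(\mu(x),x))$ and $\td\flbx(x)=(-\td\mu(x),\td\phi(\td\mu(x),x))$ using part~(c), a Lipschitz constant for $\phi$, the offset $\abs{\sigma(\rho)-\sigma(\td\rho)}\to 0$, and stability of the $PC^r$ time--to--impact map~\cite[Lemma~9, Theorem~11]{RalphScholtes1997}; then~\cite[Theorem~11]{RalphScholtes1997} gives $\td\flbx^{-1}\in\BC{0}{\veps}(\flbx^{-1})$, whence $\norm{\eta(x)-x}\le\veps$ on $B_\delta(\rho)$.

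The main obstacle is the state--fattening estimate in part~(c): one must show that the combinatorial structure of the family $\set{D_b}$ is stable under perturbation of the event functions. This is precisely where invertibility of $Dh(\rho)$ is indispensable --- it puts the event surfaces in general position near $\rho$, so that displacing them by $O(\delta)$ displaces each $D_b$ by $O(\delta)$ in Hausdorff distance --- and its failure when $Dh(\rho)$ is singular (regions can then appear or disappear under perturbation) is exactly what forces the separate embedding treatment announced for the general case.
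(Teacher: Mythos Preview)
Your proposal is correct, and the argument you sketch would go through. The principal difference from the paper lies in how you handle part~(c) and, as a consequence, part~(d).

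The paper does not estimate the Filippov deviation between $\obar F$ and $\obar{\td F}$ directly. Instead it exploits the invertibility of $Dh(\rho)$ to \emph{change coordinates}: pushing $F$ forward by the local diffeomorphism $h$ and $\td F$ forward by $\td h$ produces vector fields $F'$, $\td F'$ on a neighborhood of the origin that are both \ECr{} with respect to the \emph{same} event functions (the coordinate projections $y\mapsto y_j$). The perturbation of event functions is thereby absorbed entirely into a perturbation of the vector field, and \thmpertvf{} applies verbatim to the pair $(F',\td F')$. This is exactly the alternative you flag parenthetically at the end of your discussion of~(c). For part~(d) the paper then takes the conjugacy $\eta'$ supplied by \thmpertvf{}(c) and undoes the coordinate change, setting $\eta=\td h\circ\eta'\circ h^{-1}$; the $C^0$ estimate on $\eta$ follows from the $C^0$ estimate on $\eta'$ and a Lipschitz bound on $\td h$.

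Your direct route---proving Hausdorff stability of the regions $D_b$ under $C^0$ perturbation of $h$ and feeding that into the state--fattening term of the Filippov deviation, then rebuilding the flowbox conjugacy from scratch---is sound and makes the geometric role of the hypothesis $\det Dh(\rho)\neq 0$ very explicit (it is precisely what keeps each $D_b$ from collapsing or bifurcating). The paper's route is more economical, since it reuses \thmpertvf{} wholesale rather than reproving its flowbox estimates, but it hides the region geometry behind the change of variables.
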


%
%

\begin{proof}[Proof of {\thmpertef}]
Smooth dependence of the intersection point follows from the Implicit Function Theorem~\cite[Theorem~2.5.7]{AbrahamMarsden1988} since $C^r$ functions over compact domains comprise a Banach space~\cite[Chapter~2.1]{Hirsch1976}.
Specifically, if $h\in C^r(D,\R^n)$ satisfies $h(\rho) = 0$ for some $\rho\in D$ and $Dh(\rho)$ is invertible\footnote{Note that necessarily $n = \dim D$.},
then there exists $\alpha,\beta > 0$ and $\td{\rho}\in C^r(B_\alpha(h),B_\beta(\rho))$ such that
for all $\td{h}\in B_\alpha(h)$ the point $\td{\rho}(\td{h})$ is the unique zero of $\td{h}$ on $B_\beta(\rho)$, i.e. $\td{h}(\td{\rho}(\td{h})) = 0$ and for all $x\in B_\beta(\rho)\sm\set{\td{\rho}(\td{h})}$ we have $\td{h}(x) \ne 0$.
This establishes (a); (b) follows from continuity.

For any $\delta' > 0$, we can choose $\delta > 0$ sufficiently small to ensure that $\td{F}\in\BC{r}{\delta}(F)$, $\td{h}\in\BC{r}{\delta}(h)$ implies $D\td{h}^{-1}\circ\td{F}\in\BC{r}{\delta'}\paren{Dh^{-1}\circ F}$; 
let $\td{F}' = D\td{h}^{-1}\circ\td{F}$, $F' = Dh^{-1}\circ F$.
With $\td{\phi}':\td{\e{F}}'\into\R^d$, ${\phi}':{\e{F}}'\into\R^d$ denoting the flows for $\td{F}'$, $F'$,
{\thmpertvf} implies that $\delta'>0$ can be chosen sufficiently small to ensure $\td{\phi}'\in\BC{0}{\veps'}(\phi')$ for any $\veps' > 0$.
Since $\td{h}$ provides conjugacy between $\td{\phi}$ and $\td{\phi}'$, and similarly $h$ provides conjugacy between $\phi$ and $\phi'$,
we conclude that $\delta > 0$ can be chosen sufficiently small to ensure $\td{\phi}\in\BC{0}{\veps}(\phi)$ on $\td{\e{F}}\cap\e{F}$.
This establishes (c).

Let $\eta'\in PC^r(U',\td{U}')$ be the conjugacy from {\thmpertvf} relating $\phi'$ to $\td{\phi}'$.
Then $\eta = \td{h}\circ\eta'\circ h^{-1}$ provides conjugacy between $\phi$ and $\td{\phi}$ since
\eqnn{
\eta\circ\phi(t,x) 
& = \td{h}\circ\eta'\circ h^{-1}\circ\phi(t,x) \\
& = \td{h}\circ\eta'\circ\phi'(t,h^{-1}(x)) \\
& = \td{h}\circ\td{\phi}'(t,\eta'\circ h^{-1}(x)) \\
& = \td{\phi}(t,\td{h}\circ\eta'\circ h^{-1}(x)) \\
& = \td{\phi}(t,\eta(x))
}
for all $(t,x) \in \R\times\R^d$ such that $x\in h(U)$, $t\in \e{F}^x\cap \td{\e{F}}^{\eta(x)}$, and $\phi(t,x)\in U$.
Furthermore, given $\veps > 0$ we may choose $\delta > 0$ sufficiently small to ensure 
$\td{h}^{-1}\in\BC{0}{\delta}(h^{-1})$ and $\eta'\in\BC{0}{\delta}(\id)$, 
whence
\eqnn{
\norm{\eta(x) - x}
& = \norm{\td{h}\circ\eta'\circ h^{-1}(x) - x} \\
& \le \norm{\td{h}\circ\eta'\circ h^{-1}(x) - \td{h}\circ h^{-1}(x)} + \norm{\td{h}\circ h^{-1}(x) - x} \\
& \le L_{\td{h}}\norm{\eta'(y) - y} + \delta \\
& \le (1 + L_{\td{h}})\delta
}
for all $x\in B_\delta(0)$.
Thus $\delta < \veps / (1 + L_{\td{h}})$ ensures $\eta|_{B_\delta(\rho)}\in\BC{0}{\veps}(\id_{B_\delta(\rho)})$.
This completes the proof of claim (d).
\end{proof}


\rem{
Now consider the case where $F:D\into TD$ is {\ECr} at $\rho\in D$ with respect to the composite event function $h\in C^r(D,\R^n)$ but $Dh(\rho)\in\R^{n\times d}$ is not invertible (because either $n < d$, $n > d$, or $n = d$ and $\det Dh(\rho) = 0$).
We will embed this $d$--dimensional system into a $(d+n)$--dimensional system to obtain an {\ECr} vector field with respect to an invertible composite event function; this will enable application of the preceding Theorem to the degenerate system determined by $F$ and $h$.
For each $b\in\cuben$, let $S_b = \set{x^\T\in\R^{1\times d} : x^\T F_b(\rho) > 0}$ be the open half--space of row vectors that have a positive inner product with $F_b(\rho)$.
The set $S = \cap_{b\in\cuben} S_b$ is open (since each $S_b$ is open) and nonempty (since in particular $Dh_1(\rho)\in S$). 
Let $A\in\R^{d\times d}$ be an invertible matrix whose rows are selected from $S$; 
such a matrix always exists since $S$ is open and nonempty.
Now let $\obar{D} = D\times\R^n$ and define $\obar{F}:\obar{D}\into T\obar{D}$ and $\obar{h}\in C^r(\obar{D},\R^{d+n})$ as follows:
\eqnn{
\forall (x,y)\in D\times\R^n : \obar{F}(x,y) = \mat{c}{F(x) \\ 0},\ \obar{h}(x,y) = \mat{c}{A x \\ h(x) + y}.
}
Clearly $\obar{F}$ is {\ECr} at $\obar{\rho} = (\rho, 0)$,
and $D\obar{h}(\obar{\rho})$ is invertible since
\eqnn{
D\obar{h}(\obar{\rho}) = \mat{cc}{A & 0 \\ Dh(x) & I_n}\in\R^{(d+n)\times(d+n)}
}
has linearly independent columns.
Therefore {\thmpertef} may be applied to study the effect of perturbations on the flow $\obar{\phi}:\obar{\e{F}}\into\obar{D}$ for $\obar{F}$;
the conclusions of the Theorem can be specialized to the original flow $\phi:\e{F}\into D$ for $F$ as follows.
With $\e{V} = \set{(t,x)\in\e{F} : (t,x,0)\in\obar{\e{F}}}$ let
$\iota:\e{V}\into \td{\e{F}}$ denote the embedding defined by $\iota(t,x) = (t,x,0)$ for all $(t,x)\in\e{V}$ and let 
$\pi:\obar{D}\into D$ denote the projection defined by $\pi(x,y) = x$ for all $(x,y)\in\obar{D}$.
With these definitions we have
\eqnn{
\phi|_{\e{V}} = 
\paren{
\pi\circ
\obar{\phi}\circ
\iota
}|_{\e{V}}.
}
}

\sect{Computation}\label{sec:comp}
In this section, we apply the theoretical results from Sections~\ref{sec:floww},~\ref{sec:impact}, and~\ref{sec:pert} to derive procedures to compute the B--derivative of the flow and assess stability of a periodic orbit for an {\ECr} vector field $F$.
We begin in 
\sct{salt} by developing a concrete procedure to compute the B--derivative of the piecewise--differentiable flow yielded by $F$.
Subsequently, in \sct{stab} we provide sufficient conditions ensuring exponential stability of a periodic orbit that passes through the intersection of multiple surfaces of discontinuity for $F$.

\subsect{Variational Equations and Saltation Matrices}\label{sec:salt}

In this section we compute the B--derivative of the piecewise--differentiable flow by solving a jump--linear time--varying ordinary differential equation (ODE) along a trajectory.
At trajectory points where the vector field is $C^r$, we recall in \sct{Cr} that the derivative is obtained by solving a time--varying ODE (the so--called \emph{variational equation}) with no ``jumps''.
At points where the vector field is discontinuous along one (or two transverse) event surface(s), in \sct{ECr} we note (as others have before us) that the ODE must be updated discontinuously (via a so--called \emph{saltation matrix}).
In the remainder of the section, we exploit properties of the piecewise--differentiable flow to derive a generalization of this procedure applicable in the presence of an arbitrary number of surfaces of discontinuity that are not required to be transverse.

\subsubsect{$C^r$ vector field}\label{sec:Cr}
Let $D\subset\R^d$ be an open domain and $F\in C^r(D,TD)$ a smooth vector field on $D$.
It is a classical result~\cite[Theorem~1 in~\S15.2]{HirschSmale1974} that the derivative of the flow $\phi:\e{F}\into D$ associated with $F$ with respect to state can be obtained by solving a linear time--varying  differential equation---the so--called \emph{variational equation}---along a trajectory, i.e. if  $(t,x)\in\e{F}$ and $X:[0,t]\into\R^{d\times d}$ satisfies 
\eqnn{\label{eqn:var}
\forall u\in[0,t] : \dot{X}(u) = D_x F(\phi(u,x)) X(u),\ X(0) = I,
}
then the derivative of the flow with respect to time and state is given by
\eqnn{\label{eqn:DtDxphi}
D_t \phi(t,x) = F(\phi(t,x)),\ D_x \phi(t,x) = X(t).
}
Here and in the sequel we assume without loss of generality that $t > 0$; the $t < 0$ case can be addressed by applying the same reasoning to the vector field $-F$.

\subsubsect{Event--selected $C^r$ vector field}\label{sec:ECr}
If the vector field is instead {\ECr}, $F\in EC^r(D)$, adjustments must be made to~\eqref{eqn:var} wherever a trajectory crosses a surface of discontinuity.
Let $\phi:\e{F}\into D$ denote the global flow of $F$ yielded by~{\corflow} and let $(t,x)\in\e{F}$.
As shown in~\cite[Equation~1.4]{AizermanGantmacher1958} (and subsequently~\cite[Equations~57--60]{HiskensPai2000}), if for some $s\in(0,t)$ the vector field $F$ is {\ECr} at $\rho = \phi(s,x)$ with respect to a single surface of discontinuity, $H$, then the variational equation~\eqref{eqn:var} must be updated discontinuously via multiplication by a so--called \emph{saltation matrix},
\eqnn{\label{eqn:salt:Ivanov}
X(s^+) = \brak{I + \frac{\paren{F_{+\ones}(\rho) - F_{-\ones}(\rho)} Dh(\rho)}{Dh(\rho) F_{-\ones}(\rho)}} X(s^-),
}
where $X(s^+) = \lim_{u\goesto s^+} X(u)$,
$X(s^-) = \lim_{u\goesto s^-} X(u)$,
and $H\subset h^{-1}(0)$ near $\rho$.

As claimed in~\cite[Equation~2.4]{Ivanov1998} (and subsequently~\cite[Theorem~7.5]{Di-BernardoBudd2008}, ~\cite[Equation~46]{DieciLopez2011}, and~\cite[Equation~27]{BizzarriBrambilla2013}), if for some $s\in(0,t)$ the
vector field $F$ is {\ECr} at $\rho = \phi(s,x)$ with respect to multiple surfaces of discontinuity, then the variational equation~\eqref{eqn:var} must be updated discontinuously via multiplication by one saltation matrix for each surface.
Unlike the preceding cases, the flow will generally not possess a classical derivative with respect to state after time $s$.
Previous authors compute the first--order effect of the flow using crossing times of perturbed trajectories.
Due to the combinatorial complexity of this approach, these authors only derive the first--order approximation for two intersecting surfaces; though they claim that the approach readily extends to arbitrary numbers of intersecting surfaces, they leave the details to the reader.

The development in \sct{floww} enables us to directly compute the derivative of the flow along trajectories passing through an arbitrary collection $\set{H_j}_{j=1}^n$ of surfaces across which $F$ is discontinuous.
Without loss of generality\footnote{Lemma~\ref{lem:finite} ensures there are a finite number of discontinuities along any integral curve of a vector field $F\in EC^r(D)$.  Therefore to evaluate the B--derivative of the flow after any number of discontinuities one may iteratively apply the procedure described in the sequel to a finite number of trajectory segments and combine the result using the chain rule~\cite[Theorem~3.1.1]{Scholtes2012}.} we assume $F$ is $C^r$ at every point in $\phi([0,t]\sm \set{s}, x)$, and we let $\rho = \phi(s,x)$ as before.

\subsubsect{Sampled vector field associated with {\ECr} vector field}
We begin by noting that the B--derivative calculation in~\eqref{eqn:Dvphibp} depends only on first--order approximations of the flow and event functions $\set{h_j}_{j=1}^n$.
For all $b\in\cuben$ let 
\eqnn{
\td{D}_b = \set{x\in D : b_j\, Dh_j(\rho)(x - \rho) \ge 0}
} 
and consider the flow $\td{\phi}:\td{\e{F}}\into D$ of the piecewise--constant vector field $\td{F}:D\into TD$ defined by
\eqnn{\label{eqn:Fsamp}
\forall b\in\cuben,\ x\in \td{D}_b : \td{F}(x) = F_b(\rho).
}
Applying~\eqref{eqn:Dvphibp} together with the chain rule~\cite[Theorem~3.1.1]{Scholtes2012} we conclude that
\eqnn{\label{eqn:Dphisamp}
\forall (v,w)\in T_{(0,\rho)}\e{F} : D\phi(0,\rho; v,w) = D\td{\phi}(0,\rho; v,w).
}
In other words, by sampling the {\ECr} vector field $F$ across its tangent planes we obtain a piecewise--constant {\EC{\infty}} vector field $\td{F}$ whose flow $\td{\phi}$ agrees with the flow $\phi$ for $F$ to first order.
In this sense, we regard the piecewise--constant ``sampled'' vector field $\td{F}$ as the analogue of the linearization of a smooth vector field in our nonsmooth setting.
Note that, since the flow of the sampled system is obtained in~\eqref{eqn:phi} by composing a sequence of piecewise--affine functions, it is piecewise--affine:
\eqnn{\label{eqn:Dphiaffine}
\forall (v,w)\in T_{(0,\rho)}\e{F} : \td{\phi}(v,\rho + w) = \td{\phi}(0, \rho) + D\td{\phi}(0,\rho; v,w).
}
These observations enable us in the remainder of this section to derive several properties of the B--derivative that will prove useful in the applications presented in \sct{app}.

\subsubsect{Saltation matrix for multiple transition surfaces}
Suppose $(v,w)\in T_{(t,x)}\e{F} = \R\times\R^d$
is such that\footnote{Since the flow $\td{\phi}$ for the ``sampled'' vector field~\eqref{eqn:Fsamp} is piecewise--affine, the set of tangent vectors that fail to satisfy the two specified conditions has measure zero.  Since the B--derivative is a continuous function of tangent vectors, it is determined by its values on the dense subset of tangent vectors that satisfy the condition.}
for all $c > 0$ sufficiently small the trajectory initialized at
$x + cw$:
(i) passes through a unique sequence of $m$ region interiors on its way to $\phi(t + cv, x + cw)\in D_{+\ones}$;
and
(ii) does not pass through the intersection of non--tangent surfaces.
Let $\word:\set{1,\dots,m}\into \cuben$ specify the sequence of region interiors, excluding $D_{+\ones}$, 
and let 
$\eta:\set{1,\dots,m}\into\set{1,\dots,n}$ 
specify the corresponding sequence%
\footnote{If $H_j$ is tangent to $H_i$ at $\rho$ then either $H_j$ or $H_i$ may be indexed by $\eta$; the choice will have no effect on the subsequent calculation.} 
of surfaces crossed. 
The B--derivative of the flow evaluated in the $(v,w)$ direction is
\eqnn{\label{eqn:Dphi}
D\phi(t,x; v,w) = D\phi(t-s, \rho)\brak{\prod_{j=1}^m D\vphi_{\word(j)}^+(0,\rho)}\mat{c}{0 \\ D\phi(s,x)} \mat{c}{v \\ w},
}
where
$D\phi(t-s,\rho)$, $D\phi(s,x)$ are obtained as in~\eqref{eqn:DtDxphi} by solving the classical variational equation since $F$ is smoothly extendable to a neighborhood of those segments of the trajectory
and for each $j\in\set{1,\dots,m}$ the derivative $D\vphi_{\word(j)}^+(0,\rho)$ is given by the matrix in the third case in~\eqref{eqn:Dvphibp} with the simplifications $\tau_{\word(j)}^+(0,\rho) = 0$, $\phi_{\word(j)}(0,\rho) = \rho$.
Substituting $f = F_{\word(j)}(\rho)$, $g^\T = D h_{\eta(j)}(\rho)$ for clarity yields
\eqnn{\label{eqn:Dvphibps}
D\vphi_{\word(j)}^+(0,\rho) = \mat{cc}{1 & \frac{1}{g^\T f} g^\T \\ 0 & I - \frac{1}{g^\T f}f\,g^\T} =
I + \frac{1}{g^\T f} \mat{c}{1 \\ -f} \mat{cc}{0 & g^\T}
}
since~\eqref{eqn:DtaubH} simplifies to $D\tau_{\word(j)}^{H_{\eta(j)}}(\rho) = -\frac{1}{g^\T f}g^\T$.
Thus, the $B$--derivative in~\eqref{eqn:Dphi} is obtained by composing rank--1 updates of the identity with solutions to classical variational equations.
In the sequel we will make use of the saltation matrix $\Xi_\word\in\R^{(d+1)\times(d+1)}$ given by
\eqnn{\label{eqn:salt}
\Xi_\word = \prod_{j=1}^m D\vphi_{\word(j)}^+(0,\rho).
}

\subsubsect{Flow between tangent transition surfaces}
If the surfaces are tangent at the point $\rho = \phi(s,x)\in\bigcap_{j=1}^n H_j \ne \emptyset$ of intersection with the trajectory, 
a perturbed trajectory is not affected to first order by flow through the interior of a region between surfaces that are tangent;
this follows from the equality in~\eqref{eqn:Dphisamp} relating the B--derivative of the original system to that of its ``sampled'' version. 
Indeed, consider the vector field illustrated in \fig{seq} where the surfaces $H_1$ and $H_2$ are tangent at $\rho$.
Evaluating the derivative $D\phi(t,z;0,(0,\delta))$ for any $\delta > 0$ requires composition of $D\vphi_{-\ones}$, $D\vphi_{[+1,-1]}$, and $D\vphi_{+\ones}$,
\eqn{
D\phi(t,z;0,(0,\delta)) = D\phi(t-s,\rho) D\vphi_{[+1,-1]}^+(0,\rho) D\vphi_{+\ones}^+(0,\rho) D\phi(s,z) \mat{c}{0 \\ \mat{c}{0 \\ \delta}},
}
since the perturbed trajectory $\phi(t,z + (0,\delta))$ passes through the interior of $D_{[+1,-1]}$.
Combining~\eqref{eqn:DtaubH},~\eqref{eqn:Dphi}, and~\eqref{eqn:Dvphibps}, 
after some algebra we obtain
\eqn{
D\vphi_{[+1,-1]}^+(0,\rho) D\vphi_{-\ones}^+(0,\rho) = D\vphi_{-\ones}^+(0,\rho).
}
In other words, $D\phi(t,z;0,(0,\delta))$ is unaffected by flow through $D_{[+1,-1]}$.
Intuitively, the time spent flowing through any region between surfaces that meet at a tangency at $\rho\in D$ depends quadratically on the distance from $\rho$, therefore it does not affect the first--order approximation of the flow through $\rho$.
If $r > 1$ B--derivatives of the flow are desired, then it would be necessary to take these higher--order effects into account when evaluating the desired higher--order derivative.

\subsubsect{Variational equation for event--selected $C^r$ vector field}
By synthesizing the preceding observations, we now provide a generalization of the variational equation in~\eqref{eqn:var} applicable to the piecewise--differentiable flow yielded by an {\ECr} vector field.
We wish to evaluate $D\phi(t,x;v,w)$ where $F$ is {\ECr} at $\rho = \phi(s,x)$ for some $s\in(0,t)$ and $F$ is $C^r$ at every point in $\phi([0,t]\sm\set{s}, x)$, and where $(v,w)\in T_{(t,x)} \e{F}$.
By~\eqref{eqn:Dphi}, the desired derivative can be obtained by solving a jump--linear time--varying differential equation.
With $\word:\set{1,\dots,m}\into\cuben$ denoting the \emph{word} associated with the tangent vector $(v,w)$ from~\eqref{eqn:Dphi} and letting $\Xi_\word\in\R^{(d+1)\times(d+1)}$ be the saltation matrix from~\eqref{eqn:salt},
if $(\lambda,\xi):[0,t]\into\R\times\R^d$ satisfies
\eqnn{\label{eqn:varn}
\forall u\in [0,t]\sm\set{s} :
\mat{c}{\dot{\lambda}(u) \\ \dot{\xi}(u)} &= \mat{c}{0 \\ D_x F(\phi(u,x)) \xi(u)},\\
\mat{c}{\lambda(0) \\ \xi(0)} = \mat{c}{v \\ w}&,\
\mat{c}{\lambda(s) \\ \xi(s)} = \Xi_\word \mat{c}{\lambda(s^-) \\ \xi(s^-)},
}
the B--derivative of the flow is given by
\eqnn{
D\phi(t,x; v,w) = F(\phi(t,x))\lambda(t) + \xi(t).
}
More generally,~\eqref{eqn:Dphi} indicates the selection functions for the piecewise--differentiable flow $\phi$ are indexed by the set of \emph{words}, i.e. functions from $\set{1,\dots,m}$ into $\cuben$ that specify the sequence of regions a perturbed trajectory could pass through when flowing from $D_{-\ones}$ to $D_{+\ones}$:
\eqnn{\label{eqn:words}
\Words = \set{\word:\set{1,\dots,m}\into\cuben \st m\le n,\ \word\ \text{is injective and increases from $-\ones$ to $+\ones$}};
}
here the phrase \emph{$\word$ increases from $-\ones$ to $+\ones$} means that $\word(1) = -\ones$, $\word(m) = +\ones$, and for each $j\in\set{1,\dots,m-1}$ there exists $I_j\subset\set{1,\dots,n}$ such that $\word(j+1) - \word(j) = 2\sum_{i\in I_j}e_i$.
To evaluate the (Fr\'{e}chet) derivative for the selection function $\phi_\word$ indexed by $\word\in\Words$, we solve a matrix--valued jump--linear time--varying differential equation to obtain $(\Lambda^\T_\word,X_\word):[0,t]\into\R^{(d+1)\times d}$ via
\eqnn{\label{eqn:salt:mat}
\forall u\in [0,t]\sm\set{s} :
\mat{c}{\dot{\Lambda}^\T_\word(u) \\ \dot{X}_\word(u)} &= \mat{c}{0 \\ D_x F(\phi(u,x)) X_\word(u)},\\
\mat{c}{\Lambda^\T_\word(0) \\ X_\word(0)} = \mat{c}{0 \\ I_d}&,\
\mat{c}{\Lambda^\T_\word(s) \\ X_\word(s)} = \Xi_\word \mat{c}{\Lambda^\T_\word(s^-) \\ X_\word(s^-)}.
}
Then the B--derivative of the selection function $\phi_\word$ with respect to state is given by
\eqnn{\label{eqn:Dxphiword}
D_x\phi_\word(t,x) = F(\phi(t,x)) \Lambda^\T_\word(t) + X_\word(t)
}
As we demonstrate in the following section,
evaluating~\eqref{eqn:Dxphiword} for all words $\word\in\Words$ provides a straightforward computational procedure%
\footnote{Though straightforward, this procedure can be laborious since the number of elements in $\Words$ grows factorially with the number $n$ of surfaces of discontinuity.} 
to check contractivity of a Poincar\'{e} map associated with a periodic orbit.

\subsect{Stability of a Periodic Orbit}\label{sec:stab}
We assume given an {\ECr} vector field $F\in EC^r(D)$ over an open domain $D\subset\R^d$ containing a periodic orbit $\gamma:\R\into D$.
{\thmflow} and {\corflow} together yield a maximal flow $\phi\in PC^r(\e{F},D)$ for $F$.
{\thmpmap} yields a Poincar\'{e} map $P\in PC^r(S,\Sigma)$ over any local section $\Sigma\subset D$ that intersects $\Gamma = \gamma(\R)$ at $\set{\fp} = \Gamma\cap\Sigma$.
The Bouligand derivative $DP:TS\into T\Sigma$ of this piecewise--differentiable Poincar\'{e} map can be used to assess local exponential stability of the fixed point $P(\fp) = \fp$,
as the following Corollary shows; this generalizes Proposition~3 in~\cite{Ivanov1998} to stability of fixed points for arbitrary $PC^r$ functions.

\begin{proposition}[contractivity test for stability of a periodic orbit]
\label{prop:stab}
Suppose $P\in PC^r(S,\Sigma)$ where $S\subset\Sigma$ has a fixed point $P(\fp) = \fp$ and $DP$ is a contraction over tangent vectors near $\fp$, i.e.
there exists $c\in (0,1)$, $\delta > 0$, and $\norm{\cdot}:\R^{d-1}\times\R^{d-1}\into\R$ such that 
\eqnn{\label{eqn:DPlec}
\forall x\in B_\delta(\fp)\subset S\cap\Sigma,\ v\in T_x\Sigma : \norm{DP(x;v)} \le c\norm{v}.
}
Then $\gamma$ is an exponentially stable periodic orbit.
\end{proposition}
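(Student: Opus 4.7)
The plan is to reduce exponential stability of the periodic orbit to exponential contractivity of iterates of the Poincar\'{e} map near its fixed point, using the fundamental theorem of calculus for $PC^r$ functions to pass from the infinitesimal hypothesis on $DP$ to a finite contraction estimate on $P$ itself.

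First I would fix the norm $\norm{\cdot}$ supplied by the hypothesis (any other norm on $\R^{d-1}$ will be equivalent to it) and use the fundamental theorem of calculus for piecewise--differentiable functions~\cite[Proposition~3.1.1]{Scholtes2012} to write, for any $x\in B_\delta(\fp)$,
\eqn{
P(x) - \fp = P(x) - P(\fp) = \int_0^1 DP\paren{\fp + u(x-\fp);\, x - \fp}\, du.
}
Applying the hypothesis~\eqref{eqn:DPlec} pointwise under the integral (the integrand is well defined since the segment lies in $B_\delta(\fp)$ by convexity of the ball) together with positive homogeneity of the B--derivative yields
\eqn{
\norm{P(x) - \fp} \le \int_0^1 \norm{DP(\fp + u(x-\fp);\, x-\fp)}\, du \le c\,\norm{x-\fp},
}
so $P$ maps $B_\delta(\fp)$ into itself and is a contraction with ratio $c < 1$ in the norm $\norm{\cdot}$.

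Second, iterating this estimate (or invoking the Banach contraction mapping principle directly) gives $\norm{P^k(x) - \fp} \le c^k\norm{x - \fp}$ for every $x\in B_\delta(\fp)$ and every $k\in\N$, which is exponential stability of the fixed point of the Poincar\'{e} map.

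Third, I would transfer this discrete--time contraction on $\Sigma$ to continuous--time exponential stability of $\Gamma=\gamma(\R)$. Let $T$ be the period of $\gamma$. By \thmpmap{} the impact time map $\mu$ is continuous at $\fp$ with $\mu(\fp)=T$, and by \corflow{} the flow $\phi$ is (piecewise) Lipschitz continuous on a compact neighborhood of $[0, 2T]\times\set{\fp}$; let $L$ be such a Lipschitz constant. Choose a tubular neighborhood $N$ of $\Gamma$ such that every trajectory entering $N$ returns to $\Sigma$ within $[T/2,3T/2]$ and stays within $L\cdot\delta$ of $\Gamma$ between returns. For an initial condition $y\in N$ with first intersection $x_0 = \phi(\mu(y),y)\in\Sigma$, the sequence $x_k = P^k(x_0)$ satisfies $\norm{x_k - \fp}\le c^k\norm{x_0-\fp}$, and by Lipschitz continuity of $\phi$ the distance from $\phi(t,y)$ to $\Gamma$ decays by a factor comparable to $c$ each period. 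Converting the geometric decay per return into a continuous rate $\lambda = -(\log c)/(2T)>0$ yields $\operatorname{dist}(\phi(t,y),\Gamma) \le K e^{-\lambda t}\operatorname{dist}(y,\Gamma)$ for some $K>0$ and all $t\ge 0$, which is the definition of exponential stability of $\gamma$.

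The only nontrivial step is the first one, and the subtlety there is that the hypothesis gives contractivity of $DP(x;\cdot)$ at every $x$ near $\fp$ (not merely at $\fp$); this uniformity is exactly what lets the fundamental theorem of calculus convert the B--derivative bound into a Lipschitz--type bound on $P-\fp$ without any remainder term. The third step is standard in the smooth setting and goes through verbatim here because $\phi$ is $PC^r$, hence locally Lipschitz, so no new non--smooth analysis is required beyond what is invoked in the first step.
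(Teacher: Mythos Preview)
Your proposal is correct and follows essentially the same route as the paper: apply the fundamental theorem of calculus for $PC^r$ functions~\cite[Proposition~3.1.1]{Scholtes2012} to convert the pointwise bound on $DP$ into a contraction estimate for $P$, then invoke the Banach contraction mapping principle. The paper's proof actually establishes the slightly stronger two--point bound $\norm{P(x)-P(y)}\le c\norm{x-y}$ for all $x,y\in\obar{B_\delta(\fp)}$ (not just $y=\fp$), and it leaves the passage from discrete fixed--point stability to continuous orbital exponential stability implicit, whereas you spell that step out; but the core argument is the same.
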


\pf{
By the fundamental theorem of calculus~\cite[Proposition~3.1.1]{Scholtes2012}, 
\eqn{
\forall x,y\in \obar{B_\delta(\fp)} : \norm{P(x) - P(y)} & \le \int_0^1 \norm{DP(y + s(x - y); x - y)} ds \\
& \le c \norm{x - y}.
}
We conclude that $P$ is a contraction over the compact ball $\obar{B_\delta(\fp)}$, whence by the {\CMT} its unique fixed point $P(\fp) = \fp$ is exponentially stable.
}

\noindent
In the remainder of this section we consider the case where $P$ is a {\Pmap} associated with a periodic orbit in an {\ECr} vector field, and demonstrate how the B--derivative of $P$ can be obtained from the B--derivative of the flow $\phi$. 
This provides a straightforward computational procedure to determine whether the contraction hypothesis in the above Proposition is satisfied using the variational equation developed in \sct{salt}.

Let $\mu\in C^r(V,\R)$ be the time--to--impact map for $\Sigma$ on a neighborhood $V\subset D$ containing $\fp$; note that $V$ can be chosen sufficiently small to ensure $\mu$ is continuously (as opposed to piecewise) differentiable since $F$ is $C^r$ at $\fp$.
Let $\ipct\in C^r(V,\Sigma)$ be the impact map given by $\ipct(x) = \phi(\mu(x),x)$ for all $x\in V$; again note that $\ipct$ is continuously differentiable.
By continuity of the flow there exists a neighborhood $U\subset S\subset\Sigma$ of $\fp$ sufficiently small to ensure $\set{\phi(t,x) : x\in U}\subset V$, whence we have the equality
\eqnn{
\forall x\in U : P(x) = \ipct\circ\phi(t,x).
}
Applying the~{\chainrule} we find that
\eqnn{\label{eqn:DpsiDphi}
\forall w\in T_\fp\Sigma : DP(\fp; w) = D\ipct(\fp) D\phi(t,\fp; 0, w)
}
where $D\ipct(\fp)\in\R^{(d-1)\times d}$ is the ({\Frechet}) derivative of $\ipct$. 
Following the conventions from \sct{salt}, let $\set{\phi_\word}_{\word\in\Words}$ denote the set of selection functions for the flow $\phi$.
Now satisfying the contractivity condition~\eqref{eqn:DPlec} from~{\propstab}, namely that $DP$ is a contraction over tangent vectors near $\fp$, is clearly equivalent to finding $c\in (0,1)$ and $\norm{\cdot}:\R^{d-1}\times\R^{d-1}\into\R$ such that
\eqnn{\label{eqn:DpsiDphilec}
\forall\word\in\Words,\ w\in T_\fp\Sigma : \norm{D\ipct(\fp) D_x\phi_\word(t,\fp)(0, w)} \le c\norm{w}.
}
We emphasize that a single norm must be found relative to which the inequality in~\eqref{eqn:DpsiDphilec} is satisfied for all $\word\in\Words$; it would not suffice, for instance, to merely ensure that all the eigenvalues of $D\ipct(\fp) D_x\phi_\word(t,\fp)$ reside in the open unit ball.

The condition in~\eqref{eqn:DpsiDphilec} is equivalent to requiring that the \emph{induced norm} of the linear operator $D\ipct(\fp) D_x\phi_\word(t,\fp)$ satisfy the bound
\eqnn{\label{eqn:DpsiDphi:inorm}
\forall\word\in\Words : \inorm{D\ipct(\fp) D_x\phi_\word(t,\fp)} \le c.
}
These observations are summarized formally in the following Proposition.

\begin{proposition}[induced norm test for periodic orbit stability]\label{prop:inorm}
Let $D$ be an open domain, suppose $\gamma:\R\into D$ is a $t$--periodic orbit for $F\in EC^r(D)$, let $\phi\in PC^r(\e{F},D)$ denote the maximal flow for $F$, and let $\set{\phi_\word}_{\word\in\Words}$ denote a set of selection functions for $\phi$.
Let $\Sigma\subset D$ be a local section for $F$ such that $F$ is $C^r$ at $\set{\fp} = \Gamma\cap\Sigma$ where $\Gamma = \gamma(\R)$ and let $\ipct\in C^r(V,\R)$ be the impact map for $\Sigma$ over a neighborhood $V\subset D$ containing $\fp$ such that $F|_V$ is $C^r$.
If there exists $c \in (0,1)$ and $\norm{\cdot}:\R^d\into\R^d$ such that~\eqref{eqn:DpsiDphi:inorm} holds,
then $\gamma$ is an exponentially stable periodic orbit.
\end{proposition}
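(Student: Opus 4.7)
The plan is to reduce this proposition to \propstab\ by showing that the hypothesized induced norm bound at the single point $\fp$ propagates to a uniform contractivity bound on the $B$-derivative $DP$ over a neighborhood of $\fp$ in $\Sigma$. The only substantive work beyond invoking existing results is (i) identifying the $B$-derivative of $P$ with a maximum over the Fr\'{e}chet derivatives of its selection functions, and (ii) using continuity of those Fr\'{e}chet derivatives to pass from the pointwise bound at $\fp$ to a uniform bound on a small ball.

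First I would use equation~\eqref{eqn:DpsiDphi}, which follows from the \chainrule\ applied to $P = \ipct \circ \phi(t,\cdot)$, noting that $\ipct$ is classically $C^r$ near $\fp$ (since $F$ is $C^r$ at $\fp$) and the only $PC^r$ factor is $\phi(t,\cdot)$ with selection functions $\{\phi_\word\}_{\word \in \Words}$. This identifies the selection functions of $P$ at $\fp$ with $\{\ipct \circ \phi_\word(t,\cdot)\}_{\word \in \Words}$, whose Fr\'{e}chet derivatives are precisely $D\ipct(\fp) D_x \phi_\word(t,\fp)$. Since the $B$-derivative of a $PC^r$ function is, at every point and in every direction, realized by the Fr\'{e}chet derivative of some (essentially) active selection function, the hypothesis~\eqref{eqn:DpsiDphi:inorm} gives
\eqn{
\forall w \in T_\fp \Sigma : \norm{DP(\fp; w)} \le c \norm{w}.
}

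Next I would upgrade this pointwise bound at $\fp$ to a bound on a neighborhood. Fix $c' \in (c,1)$. Each selection function $\ipct \circ \phi_\word(t,\cdot)$ is classically $C^r$, so its Fr\'{e}chet derivative is continuous in $x$; consequently its induced norm varies continuously in $x$. Since $\Words$ is finite and the induced norm at $\fp$ of each such derivative is at most $c < c'$, there exists $\delta > 0$ such that $\inorm{D\ipct(x) D_x\phi_\word(t,x)} \le c'$ for every $\word \in \Words$ and every $x \in B_\delta(\fp) \cap \Sigma$. By the selection characterization of $B$-derivatives again, this yields $\norm{DP(x; v)} \le c' \norm{v}$ for every $x \in B_\delta(\fp) \cap \Sigma$ and every $v \in T_x \Sigma$.

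Finally, since $c' \in (0,1)$, the hypotheses of \propstab\ are satisfied by $P$ on $B_\delta(\fp)$, and exponential stability of the fixed point $P(\fp) = \fp$ --- hence of the periodic orbit $\gamma$ --- follows immediately. The main obstacle I anticipate is purely bookkeeping rather than conceptual: one must verify that the ``essentially active'' structure of $PC^r$ selections at $(t,\fp)$ is indeed indexed by the set $\Words$ defined in~\eqref{eqn:words} (so that taking a maximum over $\word \in \Words$ genuinely dominates the $B$-derivative), and that the finite maximum of continuous induced-norm functions remains below $c'$ on a common ball. Both facts follow from general $PC^r$ theory~\cite[\S4.1]{Scholtes2012} and finiteness of $\Words$, but deserve brief mention to make the reduction to \propstab\ rigorous.
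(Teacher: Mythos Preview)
Your proposal is correct and follows essentially the same route as the paper: reduce to \propstab\ via the identification~\eqref{eqn:DpsiDphi} of $DP$ with the selection derivatives $D\ipct(\fp)D_x\phi_\word(t,\fp)$ indexed by $\Words$. The paper treats the passage from the pointwise bound~\eqref{eqn:DpsiDphi:inorm} at $\fp$ to the neighborhood bound~\eqref{eqn:DPlec} as ``clearly equivalent,'' whereas you spell out the continuity-and-finiteness argument (with the intermediate constant $c'$) explicitly; this is a welcome clarification rather than a different approach.
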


\rem{
As noted above,~\eqref{eqn:DpsiDphi:inorm} is equivalent to stipulating that $DP$ is a contraction over tangent vectors near $\fp$, which is the contractivity condition from~{\propstab}.
In~\cite{Ivanov1998}, Ivanov considered the stability of a fixed point of a piecewise--defined map.
It is clear from his exposition that~\cite[Proposition~3]{Ivanov1998} is intended to apply to the Poincar\'{e} map $P$ associated with a periodic orbit that passes through multiple surfaces of discontinuity. 
We demonstrate that $P$ has the piecewise--defined form assumed in~\cite[(3.1)]{Ivanov1998} and formally derive a stability condition in~{\propinorm} that is equivalent to that in~\cite[Proposition~3]{Ivanov1998}.
}

\rem{
In~{\propinorm}, the problem of finding the norm that ensures~\eqref{eqn:DpsiDphi:inorm} holds is equivalent to that of finding a common quadratic Lyapunov function for a switched linear system, which remains an open problem in the theory of switched systems.
We refer the interested reader to~\cite[Section~II--A]{LinAntsaklis2009} for a survey of state--of--the--art approaches to this problem.
}

\sect{Applications}\label{sec:app}
We now illustrate the applicability of these results by appeal to a very simple family of {\ECr} fields that abstractly captures the essential nature of the discontinuities arising in the physical settings mentioned in \sct{intro}.
For instance, integrate--and--fire neuron models consist of a population of $n$ subsystems that undergo a discontinuous change in membrane voltage and synaptic capacitance triggered by crossing a voltage threshold~\cite{KeenerHoppensteadt1981, HopfieldHerz1995, BizzarriBrambilla2013}.
Since the discontinuities in state are confined to independent ``reset'' translations in membrane voltages~\cite[Equation~(2)]{KeenerHoppensteadt1981}, these transitions can be modeled locally as a first--order discontinuity in an {\ECr} vector field.
As another example, legged animals and robots with four, six, and more limbs exhibit gaits with near--simultaneous touchdown of two or more legs~\cite{Alexander1984, GolubitskyStewart1999, HolmesFull2006}. 
Since each touchdown introduces a discontinuity in the forces acting on the body, these transitions give rise to second--order discontinuities in an {\ECr} vector field.
In the context of electrical power networks, when constituent elements---lines, cables, and transformers---encounter excessive voltages or currents they trip fail--safe mechanisms that discontinuously change connectivity between elements~\cite[Section~II-A.2]{Hiskens1995}.  

Motivated by these applications in neuroscience, biological and robotic locomotion, and electrical engineering, we now apply the results derived in the previous sections to analyze the effect of flowing near the 
intersection of multiple surfaces of discontinuity generated by a very simple but illustrative family of step functions. 
As noted in \sct{salt}, to describe this effect in general one must solve a collection of variational equations that grows factorially with the number of surfaces of discontinuity.
Thus for clarity in \sct{sync1} and \sct{sync2} we focus on 
a simple family of examples arising from the presence of a generalized signum function. 
We demonstrate that populations of phase oscillators in both first- and second-order versions of this setting can be synchronized via piecewise-constant feedback.

\subsect{Synchronization of First--Order Phase Oscillators}\label{sec:sync1}
In this section we study synchronization in a system consisting of $d$ 
\emph{first--order phase oscillators}, i.e. a control system of the form
\eqnn{\label{eqn:sync1}
\dot{q} = \nu\ones + u(q),
}
where $q\in Q = (S^1)^d$, $\nu\in\R$ is a constant, and $u:Q\into TQ$ is a state--dependent feedback law.
%
The state space is the $d$--dimensional torus $Q = (S^1)^d = \R^d / \Z^d$; we let $\pi:\R^d\into Q$ denote the canonical quotient projection, considered as a covering map~\cite[Appendix~A]{Lee2012}.
In this section, we propose a piecewise--constant form for $u$ and prove that it renders the synchronized orbit 
\eqnn{\label{eqn:sync1:Gamma}
\Gamma = \set{q\in Q \st \forall i,j\in\set{1,\dots,d} : q_i = q_j}
}
locally exponentially stable for~\eqref{eqn:sync1}.

\subsubsect{B--derivative of flow in Euclidean covering space via saltation matrices}
First, we work in the Euclidean covering space, considering the vector field $F:\R^d\into T\R^d$ defined by
\eqnn{\label{eqn:sync1:F}
\forall x\in \R^d : F(x) = \nu\ones - \delta \sgn(x)
}
where $0 < \delta < \nu$ is a given constant
and $\sgn:\R^d\into\cube{d}$ is the vectorized signum function defined as in~\eqref{eqn:sgn}.%
\footnote{We note that there are three common definitions for the scalar signum function, depending on what value one chooses to assign to $0\in\R$, and hence $3^d$ candidate definitions for a vectorized version.  Since integral curves for $F$ spend zero time at the signum's zero crossing, there is no loss of generality in our choice.}
Clearly $F$ is {\EC{\infty}} on $\R^d$ since the event surfaces coincide with the $d$ standard coordinate planes;
for clarity we let $\zerod \in\R^d$ denote the intersection point (i.e. the origin).
Let $\phi:\e{F}\into\R^d$ be the global flow for $F$ yielded by Corollary~\ref{cor:flow}.

We aim to compute the B--derivative of the flow with respect to state along the trajectory passing through $\zerod$.
For clarity we outline the computation here and relegate a detailed derivation to \app{sync1}.
For any word $\word\in\Words$ we can obtain the derivative of the selection function $\phi_\word$ with respect to state from~\eqref{eqn:Dxphiword},
\eqnn{
D_x \phi_\word(0,\zerod) = F(\phi(0,x)) \Lambda^\T_\word(0) + X_\word(0) = \Xi_\word \mat{c}{\zerod^\T \\ I_d},
}
since $\Lambda^\T_\word(0) = \zerod^\T$ and $X_\word(0) = I_d$.
The saltation matrix $\Xi_\omega$, given in general by~\eqref{eqn:salt}, simplifies in this example to~\eqref{eqn:sync1:salt1}, whence we conclude as in~\eqref{eqn:Dxphiword1} that
\eqnn{
\forall\word\in\Words : D_x\phi_\word(0,\zerod) = \frac{\nu-\delta}{\nu+\delta} I_d.
}
This shows that $\phi$ is in fact $C^1$ with respect to state at $(0,\zerod)\in\e{F}$, and hence
\eqnn{\label{eqn:sync1:salt}
\forall w\in T_{\zerod}\R^d : D\phi(0,\zerod; 0,w) = \frac{\nu-\delta}{\nu+\delta} w,
}
i.e. the first--order effect of the nonsmooth flow associated with this piecewise--constant vector field is linear contraction 
at rate $\frac{\nu-\delta}{\nu+\delta}$ independent of the direction $w\in T_{\zerod}\R^d$.

\subsubsect{B--derivative of flow in Euclidean covering space via flowbox}
Before continuing with the task at hand---namely, applying feedback of the form in~\eqref{eqn:sync1:F} to demonstrate synchronization of the first--order phase oscillators in~\eqref{eqn:sync1}---we digress momentarily to provide an alternate derivation of the result in~\eqref{eqn:sync1:salt} that yields additional intuition.
Let $\flbx_0,\flbx_0^{-1}:\R\into\R$ be the piecewise--linear homeomorphisms defined by
\eqnn{\label{eqn:sync1:psi0}
\forall s\in\R : \flbx_0(s) = \pw{s,& s < 0; \\ \frac{\nu+\delta}{\nu-\delta}s,& s \ge 0;}\
\forall \td{s}\in\R :  \flbx_0^{-1}(\td{s}) = \pw{\td{s},& \td{s} < 0; \\ \frac{\nu-\delta}{\nu+\delta}\td{s},& \td{s} \ge 0;}
}
and let $\flbx,\flbx^{-1}:\R^d\into\R^d$ be the piecewise--linear homeomorphisms defined by
\eqnn{\label{eqn:sync1:psi}
\forall x\in\R^d : \flbx(x) = \paren{\flbx_0(x_1), \dots, \flbx_0(x_d)},\
\forall \td{x}\in\R^d : \flbx^{-1}(\td{x}) = \paren{\flbx_0^{-1}(\td{x}_1), \dots, \flbx_0^{-1}(\td{x}_d)}.
}
Note that $\flbx_0\circ\flbx_0^{-1} = \id_\R$ and hence $\flbx\circ\flbx^{-1} = \id_{\R^d}$.
Since furthermore $\flbx\in PC^r(\R^d,\R^d)$, there is no ambiguity in the definition of the ``pushforward'' $\td{F} := D\flbx\circ F\circ\flbx^{-1}:\R^d\into T\R^d$.
In fact, the vector field $\td{F}$ is constant,
\eqnn{
\forall \td{x}\in\R^d : \td{F}(\td{x}) = (\nu+\delta)\ones,
}
and hence its flow $\td{\phi}:\R\times\R^d\into\R^d$ has the simple form
\eqnn{
\forall (t,\td{x})\in\R\times\R^d : \td{\phi}(t,\td{x}) = \td{x} + t (\nu+\delta)\ones.
}
Since the homeomorphism $\flbx$ provides conjugacy between the flows, we have
\eqnn{\label{eqn:sync1:conj}
\forall (t,x)\in\e{F} : \flbx\circ \phi(t,x) = \td{\phi}(t,\flbx(x)) = \flbx(x) +  t (\nu+\delta)\ones;
}
this relationship is illustrated in \fig{box}.
If $t\in\R$ and $x,w\in\R^d$ are such that $x,x+w,x-w\in D_{-\ones}$ and $\phi(t,x),\phi(t,x+w),\phi(t,x-w)\in D_{+\ones}$ as in \fig{box},
the conjugacy in~\eqref{eqn:sync1:conj} can be used to evaluate the B--derivative of the flow $D\phi$, since
\eqnn{
\phi(t,x + sw) & = \flbx^{-1}\paren{\flbx(x + sw) + t(\nu+\delta)\ones} \\
& = \frac{\nu-\delta}{\nu+\delta}\paren{(x + sw) + t(\nu +\delta)\ones} \\
& = \frac{\nu-\delta}{\nu+\delta}(x + sw) + t(\nu -\delta)\ones 
}
and hence
\eqnn{
\lim_{s\goesto 0^+} \frac{1}{s}\paren{\phi(t,x+sw) - \phi(t,x)} = \frac{\nu-\delta}{\nu+\delta} w,\
}
whence~\eqref{eqn:sync1:salt} follows directly.
We emphasize that this outcome---the piecewise--differ-entiable flow is $C^1$ with respect to state---will not arise in general,
but note that other examples in this vein can be obtained by applying other piecewise--linear homeomorphisms to a constant vector field (i.e. a flowbox) so long as the constant vector field is transverse to surfaces of non--smoothness for the homeomorphism (needed to ensure the vector field is {\ECr}).

We conclude by noting that this approach to computing $D\phi$ required a closed--form expression for the ``flowbox'' homeomorphism $\flbx$ and its inverse $\flbx^{-1}$, which is equivalent to possessing a closed--form expression for the flow $\phi$.
Since such expressions are rarely available in applications of interest, in general we expect to rely on the technique developed in \sct{salt} to compute the B--derivative of the flow.

\begin{figure*}[t]
\centering
\subfloat[$\dot{x} = F(x)$]{\label{fig:box1}
\iftoggle{siads}{
\includegraphics[width=.6\columnwidth,viewport=0 0 455 375,clip]{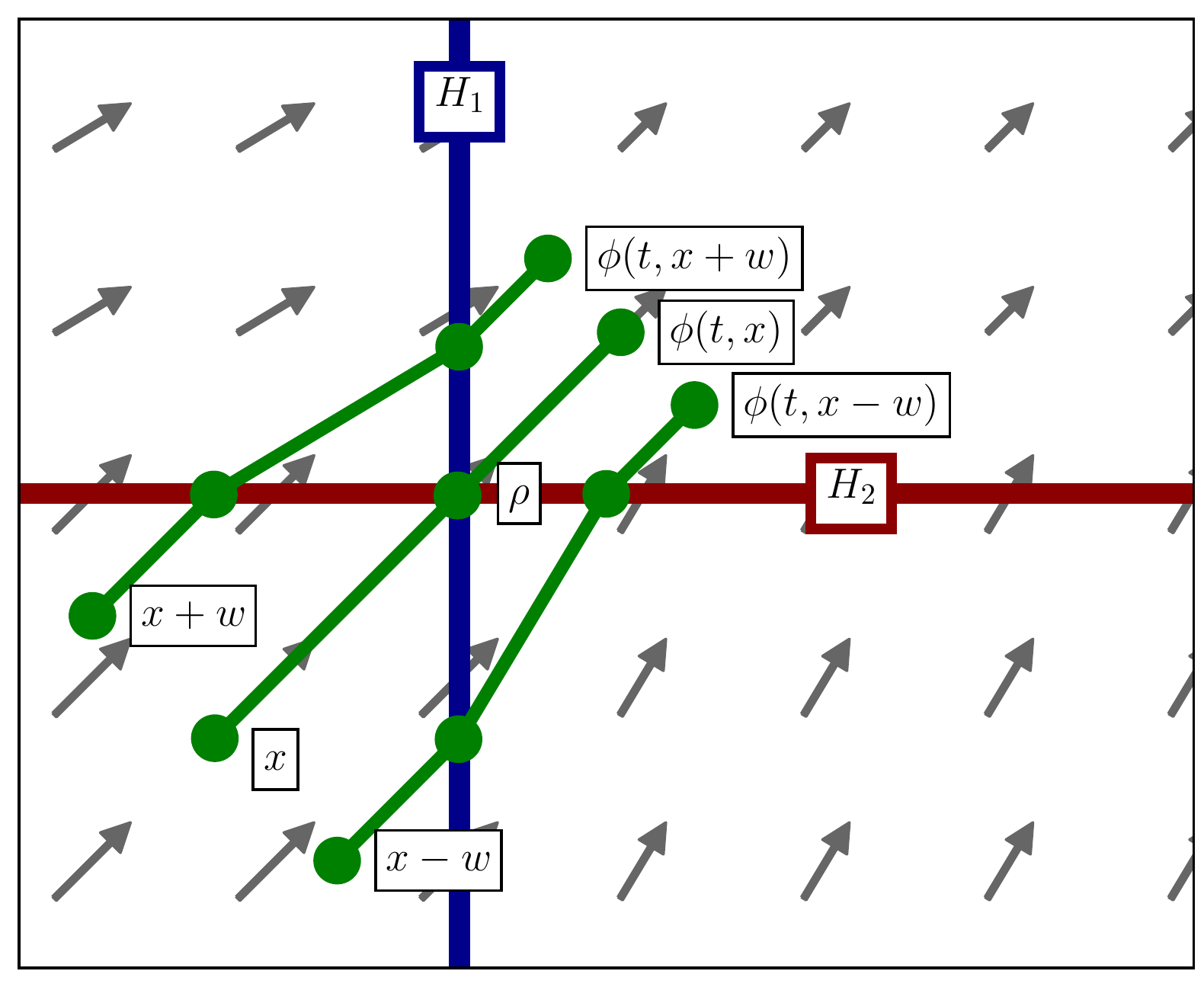}
}{
\includegraphics[width=.47\columnwidth,viewport=0 0 455 375,clip]{fig/box.pdf}
}
}
\iftoggle{siads}{\\}{}
\subfloat[$\dot{\td{x}} = D\flbx\circ F\circ\flbx^{-1}(\td{x})$]{\label{fig:box2}
\iftoggle{siads}{
\includegraphics[width=.6\columnwidth,viewport=535 0 990 375,clip]{fig/box.pdf}
}{
\includegraphics[width=.47\columnwidth,viewport=535 0 990 375,clip]{fig/box.pdf}
}
}
\caption{\label{fig:box}
(a) Illustration of the vector field $F:\R^2\into T\R^2$ from~\eqref{eqn:sync1:F} in the planar case $d = 2$. 
\iftoggle{siads}{\\}{}
(b) Pushforward of $F$ via the piecewise--linear (``flowbox'') homeomorphism $\flbx:D\into D$ from~\eqref{eqn:sync1:psi}.
}
\end{figure*}

\subsubsect{Synchronization via piecewise--constant feedback}
Now, returning to the state space of interest, let $U_\Delta\subset Q$ be the following open set parameterized by $\Delta > 0$:
\eqnn{\label{eqn:sync1:U}
U_\Delta = \set{q\in TQ \mid 
\exists x\in\pi^{-1}(q) : \normo{x} \le \frac{\Delta}{d}
};
}
for $\Delta > 0$ sufficiently small, $U_\Delta$ is ``evenly covered'' in the sense that $\pi|_{\pi^{-1}}(U_\Delta)$ is a homeomorphism~\cite[Appendix~A]{Lee2012}.
Consider the effect of applying feedback of the form 
\eqnn{\label{eqn:sync1:u}
\forall q\in Q : u(q) = \pw{ -\delta\sgn\circ\,\pi^{-1}(q),& q\in U; \\ 0,& q\in Q\sm U;} 
}
to~\eqref{eqn:sync1}. 
It is straightforward to show (as we do in \app{sync1})
that the synchronized orbit $\Gamma$ defined in~\eqref{eqn:sync1:Gamma} is a periodic orbit for~\eqref{eqn:sync1} under this feedback;
we note that the closed--loop dynamics determine an {\EC{\infty}} vector field on a neighborhood of $\Gamma$.

Now we choose a local section $\Sigma\subset Q\sm U_\Delta$ for the closed--loop dynamics that is perpendicular to $\Gamma$ and let $P\in PC^\infty(S,\Sigma)$ denote a Poincar\'{e} map for $\Gamma$ over a neighborhood $S\subset\Sigma$ containing $\set{\fp} = \Gamma\cap\Sigma$. 
To compute $DP(\fp)$ we employ~\eqref{eqn:DpsiDphi}, which involves solving the jump--linear time--varying differential equation~\eqref{eqn:salt:mat} with the saltation matrix update given by~\eqref{eqn:sync1:salt}.
Note that away from discontinuities introduced by the feedback~\eqref{eqn:sync1:u} the vector field in~\eqref{eqn:sync1} does not depend on the state. 
This implies that $D_x F \equiv 0$, hence the continuous--time portion of the variational dynamics~\eqref{eqn:salt:mat} does not alter the derivative computation.

Focusing our attention to the discrete--time (saltation matrix) portion of the variational dynamics~\eqref{eqn:salt:mat},
the closed--loop dynamics are discontinuous
at three points in $\Gamma$:
$\set{-\Delta\ones,\zerod,+\Delta\ones}$.
At $\zerod$, the saltation matrix is given by~\eqref{eqn:sync1:salt}.
At $\pm\Delta\ones$, the update is determined by a single event surface that we chose to be perpendicular to $\Gamma$; although these updates affect $D\phi$, they have no effect on $DP$ since they lie in the kernel of $D\psi$ in~\eqref{eqn:DpsiDphi}.
We conclude that $P$ is $C^1$ and
\eqnn{\label{eqn:sync1:DP}
DP(\fp) = \frac{\nu - \delta}{\nu + \delta} I_{d-1}.
}
Therefore the induced norm contraction hypothesis of~{\propinorm} is satisfied with the standard Euclidean norm and $c = \frac{\nu - \delta}{\nu + \delta}$.
We conclude that $\Gamma$ is exponentially stable, whence the state feedback in~\eqref{eqn:sync1:u} synchronizes the first--order phase oscillators in~\eqref{eqn:sync1} at an exponential rate.

\subsect{Synchronization of Second--Order Phase Oscillators}\label{sec:sync2}
In this section we study synchronization in a system consisting of $d$ \emph{second--order phase oscillators}, i.e. a control system of the form
\eqnn{\label{eqn:sync2}
\ddot{q} = \alpha\ones - \beta\dot{q} + u(q,\dot{q}),
}
where $q\in Q = \R^d/\Z^d$, $\alpha,\beta\in\R$ are constants, and $u:TQ\into T^*Q$ is a state--dependent feedback law.
The state space is the tangent bundle $TQ$ of the $d$--dimension-al torus $Q = \R^d / \Z^d$; we let $\pi:\R^{2d}\into TQ$ denote the canonical quotient projection.

If $u\equiv \mu\ones$ where $\mu\in\R$ is a constant then~\eqref{eqn:sync2} reduces to $d$ decoupled cascades of a pair of scalar affine time--invariant systems, 
thus it is clear that $\ddot{q}\goesto 0$ and hence $\dot{q}\goesto \frac{\alpha + \mu}{\beta}\ones$ as $t\goesto\infty$; this convergence is exponential with rate $\beta$.
In this section, we propose a piecewise--constant form for the feedback $u$ and prove that for all $\beta$ sufficiently large there exists an exponentially stable periodic orbit that passes near $(0,\frac{\alpha}{\beta}\ones)\in TQ$.

\subsubsect{B--derivative of flow in Euclidean covering space}
First, consider the vector field $F:\R^{2d}\into T\R^{2d}$ defined by
\eqnn{\label{eqn:sync2:F}
\forall (x,\dot{x})\in \R^{2d} : F(x,\dot{x}) = \mat{c}{\dot{x} \\ \alpha\ones - \beta \dot{x} - \delta \sgn(x)}
}
where $0 < \delta < \alpha$ is a given constant.
Clearly $F$ is {\EC{\infty}} on the open set
\eqnn{
D = \set{(x,\dot{x})\in\R^{2d} \st \forall j\in\set{1,\dots,d} : \dot{x}_j \ne 0} \subset \R^{2d}
}
since the event surfaces coincide with the first $d$ standard coordinate planes in $\R^{2d}$;
since $F$ fails to be {\ECr} at points with zero velocity, we exclude them from our analysis.
Let $\phi:\e{F}\into D$ denote the global flow for $F$ yielded by Corollary~\ref{cor:flow}.

We begin by computing the B--derivative of the flow with respect to state along the trajectory passing through a point $(0,\nu\ones)\in D$ where $\nu > 0$.
For clarity we outline the computation here and relegate a detailed derivation to \app{sync2}.
For any word $\word\in\Words$ we can obtain the derivative of the selection function $\phi_\word$ with respect to state from~\eqref{eqn:Dxphiword},
\eqnn{
D_x \phi_\word(0,(0,\nu\ones)) = F(\phi(0,x)) \Lambda^\T_\word(0) + X_\word(0) = \Xi_\word \mat{c}{\zerodd^\T \\ I_{2d}},
}
since $\Lambda^\T_\word(0) = \zerodd^\T$ and $X_\word(0) = I_{2d}$.
The saltation matrix $\Xi_\omega$, given in general by~\eqref{eqn:salt}, simplifies in this example to~\eqref{eqn:salt2}, whence we conclude as in~\eqref{eqn:Dxphiword2} that
\eqnn{
D_x\phi_\word(0,(0,\nu\ones)) = \mat{cc}{I_d & 0 \\ -\frac{2\delta}{\nu} I_d & I_d}.
}
This shows that $\phi$ is in fact $C^1$ with respect to state at $(0,(0,\nu\ones))\in\e{F}$, and hence
\eqnn{\label{eqn:sync2:salt}
\forall (p,\dot{p})\in T_{(0,\nu\ones)} D : D\phi(0,(0,\nu\ones); 0,(p,\dot{p})) = \mat{cc}{I_d & 0 \\ -\frac{2\delta}{\nu} I_d & I_d} \mat{c}{p \\ \dot{p}} =: \Xi \mat{c}{p \\ \dot{p}},
}
i.e. the first--order effect of the nonsmooth flow associated with this piecewise--constant vector field is a change in velocity $\dot{p}\mapsto \dot{p} - \frac{2\delta}{\nu} p$ that is proportional to the error in position $p$.
Solving the variational equation as in \sct{salt}, a straightforward calculation (given for completeness in \app{sync2}) yields
\eqnn{\label{eqn:sync2:var}
\mat{c}{p(s) \\ \dot{p}(s)} = \mat{cc}{I_d & \frac{1}{\beta}\paren{1 - e^{-\beta s}} I_d \\ 0 & e^{-\beta s} I_d} \mat{c}{p(0^+) \\ \dot{p}(0^+)} =: X(s) \mat{c}{p(0^+) \\ \dot{p}(0^+)},
}
where $(p(0^+),\dot{p}(0^+))$ is determined from $(p(0),\dot{p}(0))\in T_{(0,\nu\ones)} D$ by applying~\eqref{eqn:sync2:salt}.
Combining~\eqref{eqn:sync2:salt} with~\eqref{eqn:sync2:var} we conclude that the B--derivative with respect to state at time $s$ is given by
\eqnn{
\mat{c}{p(s) \\ \dot{p}(s)} 
& = D\phi\paren{s,\mat{c}{x(0) \\ \dot{x}(0)}; 0, \mat{c}{p(0) \\ \dot{p}(0)}} \\
& = X(s)\ \Xi \mat{c}{p(0) \\ \dot{p}(0)} \\
& = \mat{cc}{I_d & \frac{1}{\beta}\paren{1 - e^{-\beta s}} I_d \\ 0 & e^{-\beta s} I_d} 
\mat{cc}{I_d & 0 \\ -\frac{2\delta}{\nu} I_d & I_d} 
\mat{c}{p(0) \\ \dot{p}(0)} \\
& = \mat{cc}{I_d - \frac{2\delta}{\beta\nu} \paren{1 - e^{-\beta s}} I_d & \frac{1}{\beta}\paren{1 - e^{-\beta s}} \\ -\frac{2\delta}{\nu} e^{-\beta s} I_d & e^{-\beta s} I_d}
\mat{c}{p(0) \\ \dot{p}(0)}.
} 
Taking the limit as $s\goesto\infty$, 
\eqnn{\label{eqn:sync2:lim}
\mat{c}{p(\infty) \\ \dot{p}(\infty)} = \lim_{s\goesto\infty} \mat{c}{p(s) \\ \dot{p}(s)} = \mat{c}{\paren{1 - \frac{2\delta}{\beta\nu}} p(0) \\ 0}.
}
In plain language~\eqref{eqn:sync2:lim} indicates that, to first order, the nonsmooth flow 
associated with the vector field \eqref{eqn:sync2:F}
asymptotically 
(i) drives the initial velocity error $\dot{p}(0)$ to zero 
and
(ii) multiplies the initial position error $p(0)$ by a factor of $c = \paren{1 - \frac{2\delta}{\beta\nu}}$.
If we ensure $\nu \in \paren{\frac{\alpha}{\beta}, \frac{\alpha+\delta}{\beta}}$, then $c = \paren{1 - \frac{2\delta}{\beta\nu}}\in \paren{1 - \frac{2\delta}{\alpha}, 1-\frac{2\delta}{\alpha+\delta}} \subset (-1,+1)$, achieving contraction in positions.
Finally, we note that the convergence in~\eqref{eqn:sync2:lim} is exponential with rate $\beta$.

\subsubsect{Synchronization via piecewise--constant feedback}
We now apply a construction analogous to that of \sct{sync1} to define a piecewise--constant feedback law that results in an exponentially stable periodic orbit that passes near $(0,\frac{\alpha}{\beta}\ones)\in TQ$.
To that end, consider the following form for the control neighborhood $U_\Delta\subset TQ$ parameterized by $\Delta > 0$:
\eqnn{\label{eqn:sync2:U}
U_\Delta = \left\{\vphantom{\int}(q,\dot{q})\in TQ \mid \right.
& \paren{\exists (x,\dot{x})\in\pi^{-1}(q,\dot{q}) : \normo{x} \le \frac{\Delta}{d}}  \\
& \left. \vphantom{\int}
\wedge
\paren{\forall j\in\set{1,\dots,d}: \dot{q}_j > 0}
\right\};
}
for $\Delta > 0$ sufficiently small, $U_\Delta$ is ``evenly covered'' in the sense that $\pi|_{\pi^{-1}}(U_\Delta)$ is a homeomorphism~\cite[Appendix~A]{Lee2012}.
Furthermore, ``synchronized'' points of the form $(\pm\Delta\ones,\nu\ones)$ where $\nu > 0$ are in the boundary $\bd U_\Delta$.
We study the effect of applying feedback of the form 
\eqnn{\label{eqn:sync2:u}
\forall (q,\dot{q})\in TQ : u(q,\dot{q}) = \pw{ -\delta\sgn\circ\,\pi^{-1}(q,\dot{q}),& (q,\dot{q})\in U_\Delta; \\ 0,& (q,\dot{q})\in TQ\sm U_\Delta;} 
}
to~\eqref{eqn:sync2}. 
It is straightforward to show (as we do in \app{sync2}) 
that for all $\beta > 0$ sufficiently large there exists $\nu_\beta \in \paren{\frac{\alpha}{\beta}, \frac{\alpha+\delta}{\beta}}$ such that the trajectory initialized at $(0,\nu_\beta\ones)$ is periodic for the dynamics in~\eqref{eqn:sync2} subject to the piecewise--constant forcing~\eqref{eqn:sync2:u}.
We let
$\Gamma_\beta \subset TQ$
denote the image of the periodic orbit,
and let
$\nu_\beta^-$ 
(resp. $\nu_\beta^+ > 0$) 
denote the speed of the orbit when the position is equal to $-\Delta\ones$ 
(resp. $+\Delta\ones$)
so that
$(-\Delta\ones,\nu_\beta^-\ones)\in\Gamma_\beta$
(resp. $(+\Delta\ones,\nu_\beta^+\ones)\in\Gamma_\beta$)%
.
Note that,
by increasing $\beta$,
$\nu_\beta^-$ can be made arbitrarily close to $\frac{\alpha}{\beta}$ 
and 
$\nu_\beta$ can be made arbitrarily close to $\frac{\alpha+\delta}{\beta}$, 
whence $\nu_\beta\in\paren{\frac{\alpha}{\beta},\frac{\alpha+\delta}{\beta}}$.
Further, note that the closed--loop dynamics determine an {\EC{\infty}} vector field on a neighborhood of $\Gamma_\beta$.

Now we choose a local section $\Sigma_\beta\subset TQ\sm U_\Delta$ for the closed--loop dynamics whose normal vector is parallel to $(\ones,0)$ 
at the point $\fp_\beta = (-\Delta\ones,\nu_\beta^-\ones)\in\Gamma_\beta\cap\Sigma_\beta$.
Note that by construction $\Sigma_\beta\cap\bd U_\Delta$ is an open set containing $\fp_\beta$.
Let $P_\beta\in PC^\infty(S_\beta,\Sigma_\beta)$ denote a Poincar\'{e} map for $\Gamma_\beta$ over a neighborhood $S_\beta\subset\Sigma_\beta$ containing $\set{\fp_\beta}$.
To compute $DP_\beta(\fp_\beta)$ we employ~\eqref{eqn:DpsiDphi}, which involves solving the jump--linear time--varying differential equation~\eqref{eqn:salt:mat} with the saltation matrix update given by~\eqref{eqn:sync2:salt}.
Away from discontinuities introduced by the feedback~\eqref{eqn:sync2:u} the state dependence of the vector field in~\eqref{eqn:sync2} is confined to viscous drag on velocities.
This implies that the continuous--time portion of the variational dynamics~\eqref{eqn:salt:mat} is given by~\eqref{eqn:sync2:var}, i.e. the first--order effect of the flow contracts velocity error at an exponential rate and amplfies position error by an amount proportional to $1/\beta$. 

Focusing our attention now on the discrete--time (saltation matrix) portion of the variational dynamics~\eqref{eqn:salt:mat},
the closed--loop dynamics are discontinuous
at three points in $\Gamma$:
$(-\Delta\ones,\nu_\beta^-\ones)$,
$(0,\nu\ones)$,
and
$(+\Delta\ones,\nu_\beta^+\ones)$.
At $(0,\nu\ones)$, the saltation matrix is given by~\eqref{eqn:sync2:salt}.
At $(\pm\Delta\ones,\nu_\beta^\pm)$, the saltation matrix is determined by a single event surface whose normal vector is parallel to $(\ones,0)$. 
Although these updates affect $D\phi$, they have no effect on $DP_\beta$ since they lie in the kernel of $D\psi$ in~\eqref{eqn:DpsiDphi}.
We conclude that $P_\beta$ is $C^1$ and
\eqnn{\label{eqn:sync2:DP}
DP_\beta(\fp_\beta) = \mat{cc}{\paren{1 - \frac{2\delta}{\beta\nu_\beta}} I_{d-1} & 0 \\ 0 & 0} + E_\beta 
}
where the induced norm of the error term $\inorm{E_\beta}$ decreases exponentially with increasing $\beta$.
Therefore for all $\beta > 0$ sufficiently large the induced norm contraction hypothesis of~{\propinorm} is satisfied with the standard Euclidean norm and 
$c \approx \paren{1 - \frac{2\delta}{\beta\nu_\beta}}\in \paren{1 - \frac{2\delta}{\alpha}, 1-\frac{2\delta}{\alpha+\delta}} \subset (-1,+1)$.
We conclude that $\Gamma_\beta$ is exponentially stable for all $\beta > 0$ sufficiently large, whence the state feedback in~\eqref{eqn:sync2:u} synchronizes the second--order phase oscillators in~\eqref{eqn:sync2} at an exponential rate.

\sect{Discussion}\label{sec:disc}

In this paper, we studied local properties of the flow generated by vector fields with ``event--selected'' discontinuities, that is, vector fields that are 
(i) smooth except along a finite collection of smooth submanifolds and 
(ii) ``transverse'' to these submanifolds in the sense that integral curves intersect them at isolated points in time.
We emphasize that the vector field transversality condition (ii) excludes \emph{sliding modes}~\cite{Utkin1977, Jeffrey2014siads} from our analysis.
Basic properties of discontinuous vector fields have been studied in a more general setting, for instance yielding sufficient conditions ensuring existence of a continuous flow (see~\cite[Chapter~2]{Filippov1988} generally and~\cite[Theorem~3 in~\S8]{Filippov1988} specifically).
Our chief contribution is the introduction of techniques from non--smooth analysis~\cite{Scholtes2012} to show that a vector field with event--selected discontinuities yields a continuous flow that admits a strong first--order approximation, the (so--called~\cite{Robinson1987}) \emph{Bouligand} derivative. 
We employed this B--derivative to obtain fundamental constructions familiar from classical (smooth) dynamical systems theory, including
impact maps, flowboxes, and variational equations,
and to study the effect of perturbations, both infinitesimal and non--infinitesimal.
In the classical setting, these constructions are obtained using the classical (alternately called \Frechet~\cite[Section~3.1]{Scholtes2012} or \Jacobian~\cite[Section~1.3]{GuckenheimerHolmes1983}) derivative of the smooth flow;
our construction of the non--smooth object proceeded analogously to that of its smooth counterpart after replacing the classical derivative of the flow with our B--derivative.
Thus the piecewise--differentiable dynamical systems we study bear a closer resemblance to classically differentiable dynamical systems than to discontinuous dynamical systems considered, for instance, in~\cite{PringBudd2010, JimenezMihalas2013}.

In future work, we expect to obtain generalizations of other techniques from the classical theories of dynamical and control systems 
that depend primarily on the existence of first-- or higher--order approximations of the flow, for instance:
stability analysis via 
control Lyapunov functions~\cite{Artstein1983, Sontag1989artstein}
or infinitesimal contractivity~\cite{LohmillerSlotine1998, Sontag2010};
conditions for controllability based on the inverse function theorem~\cite[Theorem~8]{LevinNarendra1993}~\cite[Section~II.C]{CarverCowan2009};
or
necessary and sufficient conditions for optimality in nonlinear programs involving dynamical systems~\cite[Chapter~4]{Polak1997}.
More broadly, we believe our results support the study of a class of discontinuous vector fields that arise in 
neuroscience~\cite{KeenerHoppensteadt1981},
biological and robotic locomotion~\cite{HolmesFull2006}, 
and 
electrical engineering~\cite{Hiskens1995}.
In each of these disparate domains, behaviors of interest occur near the intersection of surfaces of discontinuity, hence the techniques we developed in this paper may be brought to bear.
Thus we conclude with brief remarks about the formal applicability and practical relevance of our results in these applications.

Integrate--and--fire neuron models consist of a population of $n$ subsystems that undergo a discontinuous change in membrane voltage triggered by crossing a voltage threshold~\cite{KeenerHoppensteadt1981},  
\eqnn{\label{eqn:inf}
\dot{v} & = -\gamma v + u,\\
v(t^+) & = 0\ \text{if}\ v(t^-) = \obar{v},
}
where: 
$v\in\R$ is the membrane voltage; 
$\gamma\in\R$ is a dissipation constant; 
$u\in\R$ is an exogenous input; 
and 
$\obar{v}\in\R$ is the firing threshold.
When driven by a periodic exogenous input, integrate--and--fire neuron populations can exhibit \emph{phase locking}~\cite{KeenerHoppensteadt1981} or \emph{local synchronization}~\cite{HopfieldHerz1995} behavior, resulting in simultaneous or near--simultaneous firing.
Of interest in applications is the computation of so--called%
\footnote{In practice, one computes singular values of finite--time sensitivity matrices, rather than the formal asymptotic Lyapunov exponent as defined, for instance, in~\cite[Section~3.4.1]{Sastry1999}.}
``Lyapunov exponents'' using variational equations.
We showed in~\sct{salt} that the variational equation must be supplemented by discontinuous updates via saltation matrices near such simultaneous--firing events.
As noted in~\cite[Section~4.1]{BizzarriBrambilla2013}, neglecting this non--smooth effect can result in erroneous conclusions.

Legged locomotion of animals and robots involves intermittent interaction of limbs with terrain;
their dynamics are given by~\cite[Section~II]{JohnsonBurden2015},
\eqnn{\label{eqn:sm}
M(q)\ddot{q} & = f(q,\dot{q}) + \lambda(q,\dot{q}) Da(q),\\
\dot{q}(t^+) & = R(q(t)) \dot{q}(t^-)\ \text{if}\ a_j(t^-) = 0,
}
where:
$q\in Q$ is a vector of generalized coordinates for the body and limbs; 
$M$ is the inertia tensor;
$f:TQ\into T^*Q$ contains internal, applied, and Coriolis forces;
$a:Q\into\R^{n}$ specifies $n$ unilateral constraints of the form 
\eqnn{\label{eqn:sm:uni}
\forall j\in\set{1,\dots,n} : a_j(q)\ge 0;
}
$\lambda:TQ\into T^*Q$ denotes the reaction forces that ensure~\eqref{eqn:sm:uni} are satisfied by~\eqref{eqn:sm} for all time.
The update $\dot{q}(t^+) = R(q(t)) \dot{q}(t^-)$ is triggered when one of the unilateral constraints $a_j$ would be violated by a penetrating velocity; 
it causes a discontinuity in both the velocity and the forces acting on the system.
Legged animals and robots with four, six, and more limbs exhibit gaits with near--simultaneous touchdown of two or more legs~\cite{Alexander1984, GolubitskyStewart1999, HolmesFull2006}. 
Steady--state gaits are commonly modeled as periodic orbits in the body reference frame~\cite{SchonerKelso1988, KoditschekBuhler1991, KubowFull1999}.
In practice, gait stability is assessed using the linearization of the first--return or {\Poincare} map, since if the eigenvalues of the linearization (the so--called \emph{Floquet multipliers}~\cite[Section~1.5]{GuckenheimerHolmes1983}) lie within the unit disk then the gait is exponentially stable~\cite{AizermanGantmacher1958, GrizzleAbba2002}.
We showed in~\sct{Pmap} that the {\Poincare} map associated with a periodic orbit passing through the intersection of multiple surfaces of discontinuity is generally non--smooth.
This implies that
it is not possible to assess stability of such orbits using the F--derivative
since the F--derivative of the map does not exist.
In~\sct{stab}, we showed how the B--derivative of the {\Poincare} map can be employed instead to assess stability of such orbits.

Electrical power systems undergo discontinuous changes in network topology triggered by excessive voltages or currents,
leading to differential--algebraic models of the form~\cite[(2)--(4)]{Hiskens1995}
\eqnn{\label{eqn:pow}
\dot{x} & = f(x,y,z;p),\ 0 = g(x,y,z;p),\\
z(t^+) &= h(x(t),y(t),z(t^-);p)\ \text{if}\ y_j(t^-) = 0,
}
where: 
$x\in\R^d$ contains dynamic states; 
$y\in\R^n$ contains algebraic states; 
$z\in\R^m$ contains discrete states; 
$p\in\R^\ell$ contains parameters;
$f:\R^{d+n+m+\ell}\into\R^d$ is a smooth vector field;
$g:\R^{d+n+m+\ell}\into\R^k$ is a smooth constraint function;
and
$h:\R^{d+n+m+\ell}\into\R^m$ is a smooth reset function. 
The update $z(t^+) = h(x(t),y(t),z(t^-);p)$ is applied when one of the algebraic states $y_j$ crosses a prespecified threshold (e.g. a bus voltage limit), causing a discontinuity in the vector field governing the time evolution of $x$.
In electrical power networks, discrete switches triggered by over--excitation limits can occur at arbitrary times with respect to one another.
When the switches occur at distinct time instants, the trajectory sensitivity matrix (i.e. the F--derivative the flow) computed as in~\cite{HiskensPai2000} can provide quantitative insights for design and control.
However, as noted in~\cite[Section~VIII]{HiskensPai2000}, these calculations lose accuracy when event times become coincident; this is due to the fact that the flow is not classically differentiable along trajectories that undergo simultaneous discrete transitions.
The procedure we developed in~\sct{salt} can be employed to compute a collection of trajectory sensitivity matrices (i.e. the B--derivative of the flow) that generalize the approach advocated in~\cite{HiskensPai2000} to be applicable in power networks that undergo an arbitrary (but finite) number of simultaneous discrete transitions.

\subsection*{Support}
This research was supported in part by: an NSF Graduate Research Fellowship to S. A. Burden;
ARO Young Investigator Award \#61770 to S. Revzen;
ARL Cooperative Agreements W911NF--08--2--0004 and W911NF--10--2--0016;
and NSF Award \#1028237.
The views and conclusions contained in this document are those of the authors and should not be interpreted as representing the official policies, either expressed or implied, of the Army Research Laboratory or the U.S. Government.  
The U.S. Government is authorized to reproduce and distribute for Government purposes notwithstanding any copyright notation herein.

\appendix

\sect{F--Derivative Formulae for Piecewise--Differentiable Flow}\label{app:D}
A piecewise--differentiable function is differentiable almost everywhere~\cite[Theorem~2]{Rockafellar2003}, and hence its B--derivative at any point is contained in the convex hull of the limit of F--derivatives of its selection functions~\cite[\S~4.3]{Scholtes2012}.
For completeness and to aid the reader's comprehension, we now derive explicit formulae for F--derivatives of the piecewise--differentiable objects used in \sct{flow} to construct the piecewise--differentiable flow.
In general the B--derivative can be obtained via the chain rule~\cite[Theorem~3.1.1]{Scholtes2012}.

\subsect{Budgeted time--to--boundary}\label{app:bttb}
We adopt the notational conventions from \sct{bttb}.

Define $\nu_b^+:U_b\into\R\cup\set{+\infty}$ using the convention $\min\emptyset = +\infty$ by 
\eqnn{\label{eqn:nubp}
\forall x\in U_b : \nu_b^+(x) = \min\set{\tau_b^{H_j}(x) : b_j < 0}_{j=1}^n,
}
then for all $(t,x)\in\R\times U_b$ such that $\nu_b^+(x) \ne t \ne 0$, 
the forward--time budgeted time--to--boundary 
$\tau_b^+$ is classically differentiable and
\eqnn{
D\tau_b^+(t,x) = \pw{(0,\ \zerod^\T),& t < 0; \\ (1,\ \zerod^\T),& 0 < t < \nu_b^+(x); \\ \paren{0, D\tau_b^H(x)},& \nu_b^+(x) < t;}
}
where in the third case $H\in\set{H_j}_{j=1}^n$ is such that $\tau_b^H(x) = \nu_b^+(x) > 0$.

Define $\nu_b^-:U_b\into\R\cup\set{-\infty}$ using the convention $\max\emptyset = -\infty$ by 
\eqnn{\label{eqn:nubm}
\forall x\in U_b : \nu_b^-(x) = \min\set{\tau_b^{H_j}(x) : b_j > 0}_{j=1}^n,
}
then for all $(t,x)\in\R\times U_b$ such that $\nu_b^-(x) \ne t \ne 0$, 
the backward--time budgeted time--to--boundary 
$\tau_b^-$ is classically differentiable and
\eqnn{\label{eqn:Dtaubm}
D\tau_b^-(t,x) = \pw{(0,\ \zerod^\T),& t > 0; \\ (1,\ \zerod^\T),& \nu_b^-(x) < t < 0; \\ \paren{0, D\tau_b^H(x)},& t < \nu_b^-(x);}
}
where in the third case $H\in\set{H_j}_{j=1}^n$ is such that $\tau_b^H(x) = \nu_b^-(x) < 0$.

\subsect{Budgeted flow--to--boundary}\label{app:ftb}
We adopt the notational conventions from \sct{ftb}.

Define $\nu_b^+:U_b\into\R$ as in~\eqref{eqn:nubp} then for all $(t,x)\in\R\times U_b$ such that $\nu_b^+(x) \ne t \ne 0$, the forward--time flow--to--boundary $\zeta_b^+$ is classically differentiable and
\eqnn{
D\zeta_b^+(t,x) = \pw{(\zerod,\ \zerodxd),& t < 0; \\ (F_b(\phi_b(t,x)),\ D_x\phi_b(t,x)),& 0 < t < \nu_b^+(x); \\ (\zerod,\Upsilon(t,x)),& \nu_b^+(x) < t;}
}
where in the third case $\Upsilon(t,x) = F_b(\phi_b(\tau_b^+(t,x),x)) D\tau_b^H(x) + D_x\phi_b(\tau_b^+(t,x),x)$ and $H\in\set{H_j}_{j=1}^n$ is such that $\tau_b^H(x) = \nu_b^+(x)$.

Define $\nu_b^-:U_b\into\R$ as in~\eqref{eqn:nubm} then for all $(t,x)\in\R\times U_b$ such that $\nu_b^-(x) \ne t \ne 0$, the backward--time flow--to--boundary $\zeta_b^-$ is classically differentiable and
\eqnn{\label{eqn:Dzetabm}
D\zeta_b^-(t,x) = \pw{(\zerod,\ \zerodxd),& t > 0; \\ (F_b(\phi_b(t,x)),\ D_x\phi_b(t,x)),& \nu_b^-(x) < t < 0; \\ \paren{\zerod,\Upsilon(t,x) },& t < \nu_b^-(x);}
}
where in the third case $\Upsilon(t,x) = F_b(\phi_b(\tau_b^-(t,x),x)) D\tau_b^H(x) + D_x\phi_b(\tau_b^-(t,x),x)$ and $H\in\set{H_j}_{j=1}^n$ is such that $\tau_b^H(x) = \nu_b^+(x)$.

\subsect{Composite of budgeted time--to-- and flow--to--boundary}\label{app:bttbftb}
We adopt the notational conventions from \sct{bttbftb}.

Combine~\eqref{eqn:Dtaubp} and~\eqref{eqn:Dzetabp} to obtain the derivative of $\vphi_b^+$ for all $(t,x)\in\R\times U_b$ such that $\nu_b^+(x) \ne t \ne 0$:
\eqnn{
D\vphi_b^+(t,x) = \pw{\mat{cc}{1 & 0 \\ 0 & I},& t < 0; \\ \mat{cc}{0 & 0 \\ F_b(\phi_b(t,x)) & D_x\phi_b(t,x)},& 0 < t < \nu_b^+(x); \\ \mat{cc}{1 & -D\tau_b^H(x) \\ 0 & \Upsilon(t,x)},& \nu_b^+(x) < t;}
}
where in the third case $\Upsilon(t,x) = F_b(\phi_b(\tau_b^+(t,x),x)) D\tau_b^H(x) + D_x\phi_b(\tau_b^+(t,x),x)$ and $H\in\set{H_j}_{j=1}^n$ is such that $\tau_b^H(x) = \nu_b^+(x)$.

Combine~\eqref{eqn:Dtaubm} and~\eqref{eqn:Dzetabm} to obtain the derivative of $\vphi_b^-$ for all $(t,x)\in\R\times U_b$ such that $\nu_b^-(x) \ne t \ne 0$:
\eqnn{\label{eqn:Dvphibm}
D\vphi_b^-(t,x) = \pw{\mat{cc}{1 & 0 \\ 0 & I},& t > 0; \\ \mat{cc}{0 & 0 \\ F_b(\phi_b(t,x)) & D_x\phi_b(t,x)},& \nu_b^-(x) < t < 0; \\ \mat{cc}{1 & -D\tau_b^H(x) \\ 0 & \Upsilon(t,x)},& t < \nu_b^-(x);}
}
where in the third case $\Upsilon(t,x) = F_b(\phi_b(\tau_b^-(t,x),x)) D\tau_b^H(x) + D_x\phi_b(\tau_b^-(t,x),x)$ and $H\in\set{H_j}_{j=1}^n$ is such that $\tau_b^H(x) = \nu_b^+(x)$.

\sect{Periodic Orbits and B--derivative Formulae for Phase Oscillators}
In the following subsections we provide detailed derivations that were deemed too laborious to include in the main text.

\subsect{Synchronization of First--Order Phase Oscillators}\label{app:sync1}
We adopt the notational conventions of \sct{sync1}, and 
derive several useful properties of closed--loop dynamics obtained by applying the piecewise--constant feedback in~\eqref{eqn:sync1:u} to the system in~\eqref{eqn:sync1}.

First we argue that the synchronized set~\eqref{eqn:sync1:Gamma} is a periodic orbit for the closed--loop dynamics.
Since~\eqref{eqn:sync1} consists of $d$ identical subsystems and the feedback in~\eqref{eqn:sync1:u} encounters discontinuities synchronously (i.e. all coordinates of the vector field change discontinuously at the same time) at points $\set{-\Delta\ones,\zerod,+\Delta\ones}$ and at every other point in time the vector field coordinates are identical,
we conclude that the trajectory initialized at $\zerod$ remains synchronized for all time.

We now explicitly derive the B--derivative in \eqref{eqn:sync1:salt}.
Fixing a word $\word:\set{1,\dots,d}\into\cube{d}$ with corresponding sequence of surfaces crossed 
$\eta:\set{1,\dots,d}\into\set{1,\dots,d}$, we know from \sct{salt} that for all $\veps > 0$ the F--derivative of the selection function $\phi_\word$ at $(2\veps,\phi(-\veps,\zerod))$ is given by~\eqref{eqn:Dphi},
\eqnn{\label{eqn:sync1:Dphi}
D\phi_\word(2\veps,\phi(-\veps,\zerod)) = D\phi(\veps, \zerod)\brak{\prod_{j=1}^d D\vphi_{\word(j)}^+(0,\zerod)}\mat{c}{0 \\ D\phi(\veps,\phi(-\veps,\zerod))}.
}
Here, $D\phi(\veps, \zerod)$, $D\phi(\veps,\phi(-\veps,\zerod))$ are obtained as in~\eqref{eqn:DtDxphi} by solving the classical variational equation since $F:\R^d\into T\R^d$ is smoothly extendable to a neighborhood of those segments of the trajectory;
noting that 
for all $s\in[-\veps,0)$ we have $F(\phi(s,\zerod)) = F_{-\ones}(\zerod)$, $D_x F(\phi(s,\zerod)) = 0$ and 
for all $s\in(0,+\veps]$ we have $F(\phi(s,\zerod)) = F_{+\ones}(\zerod)$, $D_x F(\phi(s,\zerod)) = 0$,
we conclude that
\eqnn{
D\phi(s, \zerod) = \mat{cc}{F_{+\ones}(\zerod) & I_d},\ 
D\phi(s, \phi(-s,\zerod)) = \mat{cc}{F_{-\ones}(\zerod) & I_d}.
}
For each $j\in\set{1,\dots,d}$ the derivative $D\vphi_{\word(j)}^+(0,\zerod)$ is given by the matrix in the third case in~\eqref{eqn:Dvphibp} with the simplifications $\tau_{\word(j)}^+(0,\zerod) = 0$, $\phi_{\word(j)}(0,\zerod) = \zerod$;
setting $f_j = F_{\word(j)}(\zerod)$, $g_j^\T = Dh_{\eta(j)(\zerod)}$ for each $j\in\set{1,\dots,d}$, by~\eqref{eqn:Dvphibps} we have
\eqnn{
D\vphi_{\word(j)}^+(0,\zerod) = I_{d+1} + \frac{1}{g_j^\T f_j} \mat{c}{1 \\ -f_j} \mat{cc}{0 & g_j^\T}.
}
Now for all $j\in\set{1,\dots,d}$ we have
$f_j = F_{\word(j)}(\zerod) = \nu\ones - \delta \word(j)\in\R^d$ 
and
$g_j^\T = Dh_{\eta(j)}(\zerod) = e_{\eta(j)}^\T\in\R^{1\times d}$,
hence
$g_j^\T f_j = \nu + \delta$.
Since 
for all $i\in\set{1,\dots,d}$ with $i > j$ 
the vector $\word(i)$ is lexicographically greater than $\word(j)$,
we also have 
$g_i^\T f_j = \nu + \delta$.
These simplifications yield for all $j\in\set{1,\dots,d-1}$
\eqnn{
D&\vphi_{\word(j+1)}^+(0,\zerod) 
 D\vphi_{\word(j)}^+(0,\zerod) \\
& = \brak{I_{d+1} + \frac{1}{\nu+\delta}\mat{c}{1 \\ -f_{j+1}} \mat{cc}{0 & e_{\eta(j+1)}^\T}}
\brak{I_{d+1} + \frac{1}{\nu+\delta}\mat{c}{1 \\ -f_{j}} \mat{cc}{0 & e_{\eta(j)}^\T}} \\
& = I_{d+1} + \frac{1}{\nu+\delta}\mat{c}{1 \\ -f_{j+1}} \mat{cc}{0 & e_{\eta(j+1)}^\T}
+ \frac{1}{\nu+\delta}\mat{c}{0 \\ f_{j+1} - f_j} \mat{cc}{0 & e_{\eta(j)}^\T}.
}
Noting that $f_{j+1} - f_j = -2\delta e_{\eta(j)}$,
we conclude 
that $e_{\eta(i)}^\T(f_{j+1}-f_j) = 0$
for all $i\in\set{j+1,\dots,d}$.
This implies that
\eqnn{\label{eqn:sync1:salt1}
\Xi_\word & = \prod_{j=1}^d D\vphi_{\word(j)}^+(0,\zerod) \\
& = I_{d+1} + \frac{1}{\nu+\delta}\mat{c}{1 \\ -f_{d}} \mat{cc}{0 & e_{\eta(d)}^\T}
+ \frac{1}{\nu+\delta}\sum_{j=1}^{d-1} \mat{c}{0 \\ -2\delta e_{\eta(j)}} \mat{cc}{0 & e_{\eta(j)}^\T}.
}
Noting for any $f\in\R^d$ with $0_d = 0\cdot\ones_d\in\R^d$ that
\eqnn{
\mat{cc}{0 & 0_d^\T \\ f & I_d} = I_{d+1} + \mat{c}{-1 \\ f}\mat{cc}{1 & 0_d^\T},
}
defining $f^+ = F_{+\ones}(\zerod)$ and noting that $f^+ - f_d = -2\delta e_{\eta(d)}$ we have
\eqnn{
\mat{cc}{0 & 0_d^\T \\ f^+ & I_d}
\prod_{j=1}^d D\vphi_{\word(j)}^+(0,\zerod) 
= &
\, I_{d+1} 
+ \mat{c}{-1 \\ f^+}\mat{cc}{1 & 0_d^\T} \\
& + \frac{1}{\nu+\delta}\mat{c}{0 \\ f^+ - f_d}\mat{cc}{0 & e_{\eta(d)}^\T} \\ 
& + \frac{1}{\nu+\delta}\sum_{j=1}^{d-1} \mat{c}{0 \\ -2\delta e_{\eta(j)}} \mat{cc}{0 & e_{\eta(j)}^\T} \\
= & \, I_{d+1} - \frac{2\delta}{\nu+\delta}\mat{cc}{0 & 0_d^\T \\ 0_d & I_d} + \mat{c}{-1 \\ f^+}\mat{cc}{1 & 0_d^\T}.
}
Finally, defining $f^- = F_{-\ones}(\zerod)$ we have
\eqnn{
D\phi_\word(0,\zerod) & = \mat{cc}{0 & 0_d^\T \\ f^+ & I_d}
\prod_{j=1}^d D\vphi_{\word(j)}^+(0,\zerod) 
\mat{cc}{0 & 0_d^\T \\ f^- & I_d} \\
& = 
I_{d+1} 
- \frac{2\delta}{\nu+\delta}\mat{cc}{0 & 0_d^\T \\ 0_d & I_d} 
+ \mat{c}{-1 \\ \paren{\frac{\nu-\delta}{\nu+\delta}}f^-}\mat{cc}{1 & 0_d^\T} \\
& = \mat{cc}{\paren{\frac{\nu-\delta}{\nu+\delta}}f^- & \frac{\nu-\delta}{\nu+\delta} I_d}.
}
Restricting to the derivative with respect to state, we find for all words $\word\in\Words$ that
\eqnn{\label{eqn:Dxphiword1}
D_x\phi_\word(0,\zerod) = \frac{\nu-\delta}{\nu+\delta} I_d.
}
Thus the piecewise--differentiable flow $\phi$ is $C^1$ with respect to state at $(0,\zerod)$, and~\eqref{eqn:sync1:salt} follows.
Restricting instead to the derivative with respect to time, we find as expected that
\eqnn{\label{eqn:Dtphiword1}
D_t\phi_\word(0,\zerod) = \frac{\nu-\delta}{\nu+\delta} f^- = f^+ = F_{+\ones}(\zerod).
}

\subsect{Synchronization of Second--Order Phase Oscillators}\label{app:sync2}
We adopt the notational conventions of \sct{sync2}, and 
derive several useful properties of closed--loop dynamics obtained by applying the piecewise--constant feedback in~\eqref{eqn:sync2:u} to the system in~\eqref{eqn:sync2}.

First we argue that, for any $\beta,\Delta > 0$ there exists $\nu_\beta \in \paren{\frac{\alpha}{\beta}, \frac{\alpha+\delta}{\beta}}$ such that the trajectory initialized at $(0,\nu\ones)$ is a periodic orbit for the closed--loop dynamics.
Since~\eqref{eqn:sync2} consists of $d$ identical subsystems and the feedback in~\eqref{eqn:sync2:u} encounters discontinuities synchronously (i.e. all coordinates of the vector field change discontinuously at the same time) at points of the form $(\theta\ones,\nu\ones)$ where $\theta\in\set{-\Delta,0,+\Delta}$ and $\nu > 0$ and at every other point in time the vector field coordinates are identical,
we conclude that a trajectory initialized at $(0,\nu\ones)$ remains synchronized for all time, so the asymptotic behavior of this trajectory can be studied by restricting our attention to the scalar case (i.e. $d = 1$), wherein the dynamics take the simple form
\eqnn{\label{eqn:sync2:d1}
d = 1 \implies \ddot{q} = \pw{\alpha - \delta - \beta \dot{q},\ q\in [-\Delta,0); \\ \alpha + \delta - \beta \dot{q},\ q\in[0,+\Delta]; \\ \alpha - \beta\dot{q},\ \text{else};}
}
here we adopt the abuse of notation that $q\in[-\Delta,0)$ if there exists $x\in[-\Delta,0)\subset\R$ such that $\pi(x) = q$, and similarly for $q\in[0,+\Delta]$.
Clearly if the initial velocity $\dot{q}(0) > 0$ then $\dot{q}(t) > 0$ for all $t > 0$ since $0 < \delta < \alpha$.
This implies that $q(t)$ crosses the thresholds $\theta\in\set{-\Delta,0,+\Delta}$ in sequence.
The impact map $P_{\beta}^{(\theta_1,\theta_2)}:(0,\infty)\into(0,\infty)$
obtained by integrating the flow of~\eqref{eqn:sync2:d1} between any sequential pair of event surfaces $(\theta_1,\theta_2)\in\set{(-\Delta,0),(0,+\Delta),(+\Delta,-\Delta)}$ 
is a contraction over velocities with a Lipschitz constant that decreases exponentially with increasing $\beta$.
Thus the composition $P_\beta = P_\beta^{(0,+\Delta)}\circ P_\beta^{(-\Delta,0)}\circ P_\beta^{(+\Delta,-\Delta)}$ is a contraction for all $\beta$ sufficiently large.
Since furthermore for all $\beta$ sufficiently large the compact set $\brak{\frac{\alpha}{\beta},\frac{\alpha+\delta}{\beta}}$ is mapped to itself under $P_\beta$, the~{\CMT} implies there exists $\nu_\beta\in\paren{\frac{\alpha}{\beta},\frac{\alpha+\delta}{\beta}}$ such that $P_\beta(\nu_\beta) = \nu_\beta$.
In other words, the trajectory initialized at $(0,\nu_\beta)$ lies on a periodic orbit for~\eqref{eqn:sync2:d1}, and hence $(0,\nu_\beta\ones)$ lies on a periodic orbit for the closed--loop dynamics obtained by applying the feedback in~\eqref{eqn:sync2:u} to the system in~\eqref{eqn:sync2}.
It is straightforward to verify in this scalar system that solving the variational equation as in \sct{salt} yields
\eqnn{\label{eqn:sync2:var:app}
\mat{c}{p(s) \\ \dot{p}(s)} = \mat{cc}{1 & \frac{1}{\beta}\paren{1 - e^{-\beta s}} \\ 0 & e^{-\beta s}} \mat{c}{p(0^+) \\ \dot{p}(0^+)} =: X(s) \mat{c}{p(0^+) \\ \dot{p}(0^+)}.
}
Since the saltation updates are synchronous along the periodic orbit,~\eqref{eqn:sync2:var} follows.

We now explicitly derive the B--derivative in \eqref{eqn:sync2:salt}.
Fixing a word $\word:\set{1,\dots,d}\into\cube{d}$ with corresponding sequence of surfaces crossed 
$\eta:\set{1,\dots,d}\into\set{1,\dots,d}$, we know from \sct{salt} that for all $\veps > 0$ the F--derivative of the selection function $\phi_\word$ at $(2\veps,\phi(-\veps,(0,\nu\ones)))$ is given by~\eqref{eqn:Dphi},
\eqn{
D\phi_\word&(2\veps,\phi(-\veps,(0,\nu\ones))) \\
& = D\phi(\veps, (0,\nu\ones))\brak{\prod_{j=1}^d D\vphi_{\word(j)}^+(0,(0,\nu\ones))}\mat{c}{0 \\ D\phi(\veps,\phi(-\veps,(0,\nu\ones)))}.
}
Here, $D\phi(\veps, (0,\nu\ones))$, $D\phi(\veps,\phi(-\veps,(0,\nu\ones)))$ are obtained as in~\eqref{eqn:DtDxphi} by solving the classical variational equation since $F:\R^{2d}\into T\R^{2d}$ is smoothly extendable to a neighborhood of those segments of the trajectory;
we conclude that
\eqnn{
\lim_{s\goesto 0^+}D\phi(s, (0,\nu\ones)) &= \mat{cc}{F_{+\ones}(0,\nu\ones) & I_d},\\ 
\lim_{s\goesto 0^+}D\phi(s, \phi(-s,(0,\nu\ones))) &= \mat{cc}{F_{-\ones}(0,\nu\ones) & I_d}.
}
For each $j\in\set{1,\dots,d}$ the derivative $D\vphi_{\word(j)}^+(0,(0,\nu\ones))$ is given by the matrix in the third case in~\eqref{eqn:Dvphibp} with the simplifications $\tau_{\word(j)}^+(0,(0,\nu\ones)) = 0$, $\phi_{\word(j)}(0,(0,\nu\ones)) = (0,\nu\ones)$;
setting $f_j = F_{\word(j)}(0,\nu\ones)$, $g_j^\T = Dh_{\eta(j)(0,\nu\ones)}$ for each $j\in\set{1,\dots,d}$, by~\eqref{eqn:Dvphibps} we have
\eqnn{
D\vphi_{\word(j)}^+(0,(0,\nu\ones)) = I_{d+1} + \frac{1}{g_j^\T f_j} \mat{c}{1 \\ -f_j} \mat{cc}{0 & g_j^\T}.
}
For convenience, we let
\eqnn{
I_{2d} = \mat{cccccc}{e_1 & \cdots & e_d & \dot{e}_1 & \cdots & \dot{e}_d},
}
i.e. for all $j\in\set{1,\dots,d}$ 
we let $e_j\in\R^{2d}$ denote the $j$--th standard Euclidean basis vector and
let $\dot{e}_j = e_{d+j}\in\R^{2d}$ denote the $(d+j)$--th such vector; though a mild abuse of the ``dot'' (``$\dot{\hphantom{e}}$'') notation, this convention simplifies the subsequent exposition.
Now for all $j\in\set{1,\dots,d}$ we have
\eqnn{
f_j &= F_{\word(j)}(0,\nu\ones) = \mat{c}{\nu\ones \\ \alpha\ones - \beta\nu\ones - \delta \word(j)}\in\R^{2d},\\
g_j^\T &= Dh_{\eta(j)}(0,\nu\ones) = e_{\eta(j)}^\T\in\R^{1\times 2d},
}
and hence for any $i\in\set{1,\dots,d}$ we have
$g_i^\T f_j = \nu$.
These simplifications yield for all $j\in\set{1,\dots,d-1}$
\eqnn{
D&\vphi_{\word(j+1)}^+(0,(0,\nu\ones)) 
 D\vphi_{\word(j)}^+(0,(0,\nu\ones)) \\
& = \brak{I_{2d+1} + \frac{1}{\nu}\mat{c}{1 \\ -f_{j+1}} \mat{cc}{0 & g_{j+1}^\T}}
\brak{I_{2d+1} + \frac{1}{\nu}\mat{c}{1 \\ -f_{j}} \mat{cc}{0 & g_{j}^\T}} \\
& = I_{2d+1} + \frac{1}{\nu}\mat{c}{1 \\ -f_{j+1}} \mat{cc}{0 & g_{j+1}^\T}
+ \frac{1}{\nu}\mat{c}{0 \\ f_{j+1} - f_j} \mat{cc}{0 & g_{j}^\T}.
}
Noting that $f_{j+1} - f_j = -2\delta \dot{e}_{\eta(j)}$,
we conclude 
that $e_{\eta(i)}^\T(f_{j+1}-f_j) = 0$
for all $i\in\set{j+1,\dots,d}$.
This implies that
\eqnn{\label{eqn:salt2}
\Xi_\word &= \prod_{j=1}^d D\vphi_{\word(j)}^+(0,(0,\nu\ones)) \\
& = I_{2d+1} + \frac{1}{\nu}\mat{c}{1 \\ -f_{d}} \mat{cc}{0 & e_{\eta(d)}^\T}
- \frac{2\delta}{\nu}\sum_{j=1}^{d-1} \mat{c}{0 \\ \dot{e}_{\eta(j)}} \mat{cc}{0 & e_{\eta(j)}^\T}.
}
Noting for any $f\in\R^{2d}$ with $0_{2d} = 0\cdot\ones_{2d}\in\R^{2d}$ that
\eqnn{
\mat{cc}{0 & 0_{2d}^\T \\ f & I_{2d}} = I_{2d+1} + \mat{c}{-1 \\ f}\mat{cc}{1 & 0_{2d}^\T},
}
defining $f^+ = F_{+\ones}(0,\nu\ones)$ and noting that $f^+ - f_d = -2\delta \dot{e}_{\eta(d)}$ we have
\eqnn{
\mat{cc}{0 & 0_{2d}^\T \\ f^+ & I_{2d}}
\prod_{j=1}^d & D\vphi_{\word(j)}^+(0,(0,\nu\ones)) \\
= &
\, I_{2d+1} 
+ \mat{c}{-1 \\ f^+}\mat{cc}{1 & 0_{2d}^\T} 
 + \frac{1}{\nu}\mat{c}{0 \\ f^+ - f_d}\mat{cc}{0 & e_{\eta(d)}^\T} \\ 
& - \frac{2\delta}{\nu}\sum_{j=1}^{d-1} \mat{c}{0 \\ \dot{e}_{\eta(j)}} \mat{cc}{0 & e_{\eta(j)}^\T} \\
= & \, I_{2d+1} 
- \frac{2\delta}{\nu}\sum_{j=1}^{d} \mat{c}{0 \\ \dot{e}_{\eta(j)}} \mat{cc}{0 & e_{\eta(j)}^\T}
+ \mat{c}{-1 \\ f^+}\mat{cc}{1 & 0_{2d}^\T}.
}
Finally, defining $f^- = F_{-\ones}(0,\nu\ones)$ we have
\eqnn{
D&\phi_\word(0,(0,\nu\ones))\\
& = \mat{cc}{0 & 0_{2d}^\T \\ f^+ & I_{2d}}
\prod_{j=1}^d D\vphi_{\word(j)}^+(0,(0,\nu\ones)) 
\mat{cc}{0 & 0_{2d}^\T \\ f^- & I_{2d}} \\
& = 
I_{2d+1} 
- \frac{2\delta}{\nu}\mat{ccc}{0 & 0_{d}^\T & 0_{d}^\T \\ 0_{d} & 0 & 0 \\ 0_{d} & I_{d} & 0} 
+ 
\brak{
\mat{c}{-1 \\ f^-} -
\mat{c}{0 \\ 0_{d} \\ 2\delta\ones}
}
\mat{ccc}{1 & 0_{d}^\T & 0_{d}^\T}.
}
Restricting to the derivative with respect to state, we find for all words $\word\in\Words$ that
\eqnn{\label{eqn:Dxphiword2}
D_x\phi_\word(0,(0,\nu\ones)) = \mat{cc}{I_d & 0 \\ -\frac{2\delta}{\nu} I_d & I_d}.
}
Thus the piecewise--differentiable flow $\phi$ is $C^1$ with respect to state at $(0,(0,\nu\ones))$, and~\eqref{eqn:sync2:salt} follows.
Restricting instead to the derivative with respect to time, we find as expected that
\eqnn{\label{eqn:Dtphiword2}
D_t\phi_\word(0,(0,\nu\ones)) = 
f^- -
\mat{c}{0_{d} \\ 2\delta\ones}
= f^+ = F_{+\ones}(0,\nu\ones).
}

\iftoggle{siads}
{
\bibliographystyle{siam-sburden}
\bibliography{multi}
}
{
\hypersetup{linkcolor=blue}
\printbibliography
}

\pagebreak
\sect{Global Piecewise--Differentiable Flow}\label{app:flow}
This section contains a proof of {\corflow} that consists of a straightforward adaptation of the proof of~\cite[Theorem~9.12]{Lee2012} obtained by replacing all instances of the modifier ``smooth'' with ``piecewise--smooth''.

\begin{lemma}[Translation Lemma]\label{lem:trans}
Let $D\subset\R^d$ be open, $F\in \ECrD$, $J\subset\R$ be an interval, and $\xi:J\into D$ an integral curve for $F$.
For any $b\in\R$, the curve $\ha{\xi}:\ha{J}\into D$ defined by $\ha{\xi}(t) = \xi(t+b)$ is also an integral curve for $F$, where $\ha{J} = \set{t : t+b\in J}$.
\end{lemma}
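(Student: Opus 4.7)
The plan is to verify the two defining properties of an integral curve for $\hat{\xi}$ directly from those of $\xi$, exploiting the fact that time translation by a constant preserves absolute continuity and shifts Lebesgue-null sets to Lebesgue-null sets. First I would observe that $\hat{J} = J - b$ is an interval (open, if $J$ is open), and that $\hat{\xi}(t) = \xi(t+b) \in D$ for all $t\in\hat{J}$ by construction, so $\hat{\xi}: \hat{J}\into D$ is well defined.

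Next, since $\xi$ is absolutely continuous on every compact subinterval of $J$, the composition $\hat{\xi} = \xi \circ \sigma_b$, where $\sigma_b(t) = t+b$ is an affine (hence bi-Lipschitz) bijection between $\hat{J}$ and $J$, is absolutely continuous on every compact subinterval of $\hat{J}$. The standard change-of-variables argument (or a direct application of the chain rule for absolutely continuous functions) then yields $\dot{\hat{\xi}}(t) = \dot{\xi}(t+b)$ for almost every $t\in\hat{J}$.

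Now let $N\subset J$ denote the Lebesgue-null set of points at which the integral-curve identity $\dot{\xi}(s) = F(\xi(s))$ fails. Its preimage $\sigma_b^{-1}(N) = N - b$ under translation is again a Lebesgue-null subset of $\hat{J}$, since Lebesgue measure on $\R$ is translation invariant. For every $t\in\hat{J}\setminus(N-b)$ we then have $t+b\in J\setminus N$, and therefore
\eqn{
\dot{\hat{\xi}}(t) = \dot{\xi}(t+b) = F(\xi(t+b)) = F(\hat{\xi}(t)).
}
This establishes $\dot{\hat{\xi}}(t) = F(\hat{\xi}(t))$ for almost every $t\in\hat{J}$, so $\hat{\xi}$ is an integral curve for $F$.

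There is no genuine obstacle here; the only point requiring a moment of care is the bookkeeping around almost-everywhere equalities, namely that the exceptional set of $\xi$ is shifted (not enlarged) by the translation, so that $\hat{\xi}$ still satisfies the ODE almost everywhere. This is why the proof goes through even though $F$ is merely $EC^r$ rather than continuous: no pointwise evaluation of $F$ at discontinuities is required, and only the measure-zero exceptional set structure is invoked.
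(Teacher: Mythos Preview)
Your proof is correct and in fact more direct than the paper's. The paper argues that $\hat{\xi}\in PC^r(\hat{J},D)$ (implicitly using that integral curves of an $EC^r$ field coincide with the flow, which is $PC^r$), and then appeals to the fundamental theorem of calculus for $PC^r$ functions together with Lemma~\ref{lem:Dflow} to conclude that $\hat{\xi}$ satisfies the ODE. By contrast, you work entirely at the level of the defining properties of an integral curve: absolute continuity is preserved under affine reparametrization, the a.e.\ derivative transforms by the chain rule $\dot{\hat{\xi}}(t)=\dot{\xi}(t+b)$, and the exceptional null set is merely translated. This is the elementary argument one finds in the smooth setting (cf.\ Lee's Lemma~9.4), and it goes through verbatim here because the definition of integral curve only demands absolute continuity and an a.e.\ identity. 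Your approach has the advantage of not relying on the $PC^r$ regularity of the flow, so it would apply equally to integral curves of any vector field with merely measurable right-hand side; the paper's route, on the other hand, keeps the regularity class in view, which is consistent with the surrounding narrative.
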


\pf{
Clearly $\ha{\xi}\in PC^r(\ha{J},D)$, whence the fundamental theorem of calculus~\cite[Proposition~3.1.1]{Scholtes2012} in conjunction with Lemma~\ref{lem:Dflow} implies $\ha{\xi}$ is an integral curve for $F$.
}

\begin{theorem}[Fundamental Theorem on Flows]
\label{thm:fund}
If $F\in \ECrD$, then there exists a unique maximal flow $\phi\in PC^r(\e{F},D)$ for $F$.
This flow has the following properties:
\begin{enumerate}
\item[(a)] For each $x\in D$, the curve $\phi^x:\e{F}^x\into D$ is the unique maximal integral curve of $F$ starting at $x$.
\item[(b)] If $s\in\e{F}^x$, then $\e{F}^{\phi(s,x)} = \e{F}^x - s = \set{t - s : t\in\e{F}^x}$.
\item[(c)] For each $t\in\R$, the set $D_t = \set{x\in D : (t,x)\in\e{F}}$ is open in $D$ and $\phi_t:D_t\into D_{-t}$ is a piecewise--$C^r$ homeomorphism with inverse $\phi_{-t}$.
\end{enumerate}
\end{theorem}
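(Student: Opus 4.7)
The plan is to adapt Lee's classical construction of a maximal flow from local flows \cite[Theorem~9.12]{Lee2012} to the piecewise--smooth setting, using \thmflow{} in place of the smooth local flow theorem and Lemma~\ref{lem:trans} to handle time translations. The heart of the adaptation is a uniqueness--of--integral--curves argument, which I would tackle first.

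I would establish that if $\xi_1, \xi_2 : I \into D$ are two integral curves for $F$ agreeing at some $t_0 \in I$, then $\xi_1 \equiv \xi_2$ on $I$. The argument proceeds piecewise between event crossings: because each event function satisfies $Dh_j(x) F(x) \ge f > 0$, an integral curve crosses each surface $H_j$ at a unique isolated time, determined by the smooth time--to--impact map $\tau_b^{H_j}$ of \sct{bttb}. Between consecutive crossings the curve satisfies a classical smooth ODE with the $C^r$ vector field $F_b$ for the appropriate $b \in \cuben$, where Picard--Lindel\"of uniqueness applies. Stitching these smooth segments together yields local agreement of $\xi_1$ and $\xi_2$ near $t_0$, and a standard connectedness argument on the interval $I$ propagates agreement throughout.

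Given uniqueness, for each $x \in D$ let $\e{F}^x \subset \R$ be the union of all open intervals containing $0$ on which an integral curve of $F$ starting at $x$ is defined; set $\phi^x$ to be the well--defined integral curve this union produces, and define $\phi(t,x) = \phi^x(t)$ on $\e{F} = \set{(t,x) : t \in \e{F}^x}$. Property~(a) is then immediate from the construction. To show $\e{F}$ is open and $\phi \in PC^r(\e{F}, D)$, I would cover the compact image $\phi^{x_0}([0, t_0])$ for any $(t_0, x_0) \in \e{F}$ by finitely many neighborhoods on which local $PC^r$ flows from \thmflow{} exist; using Lemma~\ref{lem:trans} and the uniqueness argument to match these local flows on their overlaps, their composition yields both an open product neighborhood of $(t_0, x_0)$ contained in $\e{F}$ and a $PC^r$ representation of $\phi$ there. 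Properties~(b) and~(c) then follow routinely: (b) by applying Lemma~\ref{lem:trans} to $t \mapsto \phi(t+s, x)$ and invoking uniqueness; and (c) by observing that $D_t$ is a slice of the open set $\e{F}$, that $\phi_t \circ \phi_{-t} = \id_{D_{-t}}$ follows from (b), and that joint $PC^r$ regularity of $\phi$ transfers to the partial maps $\phi_t$ and $\phi_{-t}$.

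The main obstacle is the uniqueness of integral curves, which is immediate in the smooth case via Lipschitz estimates but requires care here because $F$ is discontinuous across the event surfaces. The transversality hypothesis $Dh_j F \ge f > 0$ is precisely what precludes branching: it forces every integral curve to cross each $H_j$ at a single time, reducing the uniqueness question to a finite sequence of classical smooth ODE uniqueness problems glued together via the implicit--function--theorem construction of \sct{bttb}. Once uniqueness is in hand, the remaining gluing, openness, and regularity arguments parallel the smooth case verbatim, with ``$PC^r$'' substituted for ``smooth'' throughout.
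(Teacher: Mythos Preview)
Your proposal is correct and follows essentially the same strategy as the paper: both adapt Lee's argument \cite[Theorem~9.12]{Lee2012} by replacing the smooth local flow theorem with \thmflow{} and substituting ``$PC^r$'' for ``smooth'' throughout. The paper's uniqueness step is terser---it simply invokes \thmflow{} at an agreement point and asserts that both curves coincide with the local flow on a small interval---whereas you spell out the underlying mechanism (transversality forces isolated crossing times, classical Picard--Lindel\"of applies between crossings); this is exactly the content hidden behind the paper's appeal to the deterministic construction of \sct{const}. For openness and $PC^r$ regularity of $\e{F}$, the paper uses the standard ``$W$--set plus infimum--of--bad--times'' contradiction, while you opt for a finite covering of the compact trajectory image; the two arguments are interchangeable and both appear in standard treatments of the classical result.
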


\pf{
This proof is a straightforward adaptation of the proof of Theorem~9.12 in~\cite{Lee2012}.

{\thmflow} shows that there exists an integral curve for $F$ starting at each point $x\in D$.
Suppose $\xi,\td{\xi}:J\into D$ are two integral curves for $F$ defined on the same open interval $J$ such that $\xi(t_0) = \td{\xi}(t_0)$ for some $t_0\in J$.
Let $S = \set{s\in J : \xi(s) = \td{\xi}(s)}$.
Clearly $S\ne\emptyset$ since $t_0\in S$, and $S$ is closed in $J$ by continuity of integral curves.
On the other hand, suppose $t_1\in S$.
Applying {\thmflow} near $x = \xi(t_1)$, we see that there exists an interval $t_1\in I\subset\R$ such that $\xi|_I = \td{\xi}|_I$.
This implies $S$ is open in $J$.
Since $J$ is connected, $S = J$, which implies $\xi|_J = \td{\xi}|_J$.
Thus any two integral curves that agree at one point agree on their common domain.

For each $x\in D$, let $\e{F}^x$ be the union of all domains of integral curves for $F$ originating at $x$ at time $0$.
Define $\phi^x : \e{F}^x\into D$ by letting $\phi^x(t) = \xi(t)$, where $\xi$ is any integral curve starting at $x$ and defined on an open interval containing $0$ and $t$.
Since all such integral curves agree at $t$ by the argument above, $\phi^x$ is well--defined, and is obviously the unique maximal integral curve starting at $p$.

Now let $\e{F} = \set{(t,x)\in\R\times D : t\in\e{F}^x}$ and define $\phi:\e{F}\into D$ by $\phi(t,x) = \phi^x(t)$.
We also write $\phi_t(x) = \phi(t,x)$.
By definition, $\phi$ satisfies property (a) in the statement of the fundamental theorem: for each $x\in D$, $\phi^x$ is the unique maximal integral curve for $F$ starting at $x$.
To verify the group laws, fix any $x\in D$ and $s\in\e{F}^x$, and write $y = \phi(s,x) = \phi^x(s)$.
The curve $\xi:(\e{F}^x - s)\into D$ defined by $\xi(t) = \phi^x(t+s)$ starts at $y$, and Lemma~\ref{lem:trans} shows that $\xi$ is an integral curve for $F$.
Since $\phi$ is a function, $\xi$ agrees with $\phi^y$ on their common domain, which is equivalent to 
\eqnn{\label{eqn:grp}
\forall s\in\e{F}^x, t\in\e{F}^{\phi(s,x)}: (s+t\in\e{F}^x)\implies\paren{\phi(t,\phi(s,x)) = \phi(t+s,x)}.
}
The fact that $\phi(0,x) = x$ for all $x\in D$ is obvious.
By maximality of $\phi^x$, the domain of $\xi$ cannot be larger than $\e{F}^y$, which means that $\e{F}^x - s\subset\e{F}^y$.
Since $0\in\e{F}^x$, this implies $-s\in\e{F}^y$, and the group law~\eqref{eqn:grp} implies that $\phi^y(-s) = x$.
Applying the same argument with $(-s,y)$ in place of $(s,x)$, we find that $\e{F}^y + s\subset\e{F}^x$, which is the same as $\e{F}^y\subset\e{F}^x - s$.
This proves (b).

Next we show that $\e{F}$ is open in $\R\times D$ (so it is a flow domain) and that $\phi:\e{F}\into D$ is $PC^r$.
Define a subset $W\subset\e{F}$ as the set of all $(t,x)\in\e{F}$ such that $\phi$ is defined and $PC^r$ on a product neighborhood of $(t,x)$ of the form $J\times U\subset\e{F}$, where $J\subset\R$ is an open interval containing $0$ and $t$ and $U\subset D$ is a neighborhood of $x$.
Then $W$ is open in $\R\times D$, and the restriction $\phi|_W\in PC^r(W,D)$, so it suffices to show that $W = \e{F}$.
Suppose this is not the case.  Then there exists some point $(\tau,x_0)\in\e{F}\sm W$.
For simplicity, assume $\tau > 0$; the argument for $\tau < 0$ is similar (and can be obtained, for instance, by considering the flow for $-F$).

Let $t_0 = \inf\set{t\in\R : (t,x_0)\not\in W}$~\see{Fig.~9.6 in~\cite{Lee2012}}.
By {\thmflow}, $\phi$ is defined and $PC^r$ in some product neighborhood of $(0,x_0)$, so $t_0 > 0$.
Since $t_0 \le \tau$ and $\e{F}^{x_0}$ is an open interval containing $0$ and $\tau$, it follows that $t_0\in\e{F}^{x_0}$.
Let $y_0 = \phi^{x_0}(t_0)$.
By {\thmflow} again, there exists $\veps > 0$ and a neighborhood $U_0$ of $y_0$ such that $(-\veps,\veps)\times U_0\subset W$.
We will use the group law~\eqref{eqn:grp} to show that $\phi$ admits a $PC^r$ extension to a neighborhood of $(t_0,x_0)$, which contradicts our choice of $t_0$.

Choose some $t_1 < t_0$ such that $t_1 + \veps > t_0$ and $\phi^{x_0}(t_1)\in U_0$.
Since $t_1 < t_0$, we have $(t_1,x_0)\in W$, so there is a product neighborhood $(t_1-\delta,t_1+\delta)\times U_1\subset W$ for some $\delta > 0$.
By definition of $W$, this implies $\phi$ is defined and $PC^r$ on $[0,t_1+\delta)\times U_1$.
Because $\phi(t_1,x_0)\in U_0$, we can choose $U_1$ small enough that $\phi$ maps $\set{t_1}\times U_1$ into $U_0$.
Define $\td{\phi}:[0,t_1+\veps)\times U_1\into D$ by
\eqn{
\forall (t,x)\in [0,t_1+\veps)\times U_1 : \td{\phi}(t,x) = \pw{\phi_t(x),& x\in U_1,\ 0\le t < t_1, \\ \phi_{t - t_1}\circ\phi_{t_1}(x),& x\in U_1,\ t_1-\veps < t < t_1+\veps.}
}
The group law for $\phi$ guarantees that these definitions agree where they overlap, and our choices of $U_1$, $t_1$, and $\veps$ ensure that this defines a $PC^r$ map.
By Lemma~\ref{lem:trans}, each map $t\mapsto\td{\phi}(t,p)$ is an integral curve of $F$, so $\td{\phi}$ is a $PC^r$ extension of $\phi$ to a neighborhood of $(t_0,x_0)$, contradicting our choice of $t_0$.
This completes the proof that $W = \e{F}$.

Finally, we prove (c).
The fact that $D_t$ is open is an immediate consequence of the fact that $\e{F}$ is open.
From part (b) we deduce that
\eqn{
x\in D_t &\implies t\in\e{F}^x \implies \e{F}^{\phi_t(x)} = \e{F}^x - t \\
&\implies -t\in\e{F}^{\phi_t(x)} \implies \phi_t(x)\in D_{-t},
}
which shows that $\phi_t$ maps $D_t$ to $D_{-t}$.
Moreover, the group laws then show that $\phi_{-t}\circ\phi_t$ is equal to the identity on $D_t$.
Reversing the roles of $t$ and $-t$ shows that $\phi_t\circ\phi_{-t}$ is the identity on $D_{-t}$, which completes the proof.
}

\vspace{5cm}

\pagebreak
\sect{Perturbation of Differential Inclusions}\label{app:filippov}

In the proof of the perturbation results of \sct{pert}, we relied on a result due to Filippov.
For completeness, we reproduce the statement of the result we required.

\begin{assumption}[{\cite[Chapter~2, \S8, Theorem~1]{Filippov1988}}]\label{ass:filippov}
In the domain $\e{F}$ a set--valued function $F(t,x)$ satisfies the \emph{basic conditions} if for all $(t,x)\in \e{F}$ the set $F(t,x)$ is nonempty, bounded, closed, and convex, and furthermore the function $F$ is upper semicontinuous in $t,x$.
\end{assumption}

\noindent
Here, $\e{F}$ is understood to be a subset of $\R\times\R^d$, and $F$ is upper semicontinuous as a \emph{multifunction} $F:\e{F}\into\pset{\R^d}$~\cite[\S2.1]{Clarke1990}, i.e. for all $(t,x)\in\e{F}$, $\veps > 0$ there exists $\delta > 0$ such that
\eqnn{
\forall (s,y)\in (t,x) + B_\delta(0) : F(s,y) \subset F(t,x) + B_\veps(0)
}
where we adopt the usual notation in a Banach space $X$,
\eqnn{
\forall x\in X,\ B\subset X : x + B = \set{x + y : y\in B}.
}
As in~\cite[Chapter~2, \S8]{Filippov1988}, for any $\td{F}:\e{F}\into\pset{\R^d}$ we define the deviation $d_{\e{F}}(\td{F},F)$ as
\eqnn{
d_{\e{F}}(\td{F},F) = \inf\set{\delta > 0 \mid \forall (t,x)\in\e{F} : \td{F}(t,x)\subset \brak{\co{F\paren{t+ B_\delta(0), x+B_\delta(0)}}} + B_\delta(0)}
}
where for any $U\subset\R^d$ the set $\co{U}$ denotes the convex hull of points in $U$.

\begin{theorem}[{\cite[Chapter~2, \S8, Theorem~1]{Filippov1988}}]\label{thm:filippov}
Let $F(t,x)$ satisfy {\assinc} in the open domain $\e{F}$; $t_0\in [a,b]$, $(t_0,x_0)\in \e{F}$; let all the solutions of the problem
\eqnn{\label{eqn:filippov:1}
\dot{x}\in F(t,x),\ x(t_0) = x_0
}
exist for all $t\in [a,b]$ and their graphs lie in $\e{F}$.

Then for any $\veps > 0$ there exists a $\delta > 0$ such that for any $\td{t}_0\in[a,b]$, $\td{x}_0$ and $\td{F}(t,x)$ satisfying the conditions
\eqn{\label{eqn:filippov:2}
\abs{\td{t}_0 - t_0} \le \delta,\ 
\norm{\td{x}_0 - x_0} \le \delta,\ 
d_{\e{F}}(\td{F}, F) \le \delta
}
and {\assinc}, each solution of the problem
\eqnn{\label{eqn:filippov:3}
\dot{\td{x}}\in \td{F}(t,\td{x}),\ \td{x}(\td{t}_0) = \td{x}_0
}
exists for all $t\in [a,b]$ and differs from some solution of~\eqref{eqn:filippov:1} by not more than $\veps$.
\end{theorem}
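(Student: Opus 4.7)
The plan is an indirect argument via compactness of the solution set, exploiting convexity and upper semicontinuity of $F$ to pass to the limit in the inclusion.

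First I would build a compact tube around the unperturbed solutions. Since $F$ is bounded on compact subsets of $\e{F}$ (a consequence of {\assinc}), the set $\e{S}$ of graphs of solutions of~\eqref{eqn:filippov:1} on $[a,b]$ is compact in $\R\times\R^d$ by Arzel\`a--Ascoli. Enclose $\e{S}$ in an open tube $\e{F}_0\subset\e{F}$ with compact closure in $\e{F}$, and set $M=\sup\set{\norm{v} : v\in F(t,x),\ (t,x)\in\cl{\e{F}_0}}$. Now suppose the conclusion fails: there exist $\veps_0>0$, $\td{t}_0^{(n)}\to t_0$, $\td{x}_0^{(n)}\to x_0$, and multifunctions $\td{F}_n$ satisfying {\assinc} with $d_{\e{F}}(\td{F}_n,F)\to 0$, together with solutions $\td{x}_n$ of~\eqref{eqn:filippov:3} that, along a subsequence, either fail to exist on all of $[a,b]$ or stay more than $\veps_0$ in sup norm from every element of $\e{S}$.

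Next I would produce a limiting curve. Local existence of each $\td{x}_n$ is classical for upper semicontinuous convex-valued inclusions. As long as the graph of $\td{x}_n$ lies in $\e{F}_0$, the deviation condition forces $\norm{\dot{\td{x}}_n}\le M+1$, so the $\td{x}_n$ are uniformly Lipschitz; a continuation argument combined with the starting points approaching $\e{S}$ keeps graphs inside $\e{F}_0$ on all of $[a,b]$ for $n$ large (if instead $\td{x}_n$ were to escape before time $b$, the same argument restricted to the escape time would already produce a limit in $\e{S}$, violating the separation hypothesis). Arzel\`a--Ascoli yields a uniformly convergent subsequence $\td{x}_n\to\td{x}^*$ on $[a,b]$, and weak $L^2$ compactness of the bounded sequence $\dot{\td{x}}_n$ gives $\dot{\td{x}}_n\rightharpoonup \dot{\td{x}}^*$.

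The heart of the argument is verifying $\dot{\td{x}}^*(t)\in F(t,\td{x}^*(t))$ for almost every $t$. Fix a Lebesgue point $t$ of $\dot{\td{x}}^*$ and $\veps>0$. Upper semicontinuity and compactness of $F(t,\td{x}^*(t))$ yield $\eta>0$ with $F(s,y)\subset F(t,\td{x}^*(t))+B_\veps(0)$ whenever $(s,y)\in (t,\td{x}^*(t))+B_\eta(0)$. The uniform convergence $\td{x}_n\to \td{x}^*$ together with $d_{\e{F}}(\td{F}_n,F)\to 0$ then places $\dot{\td{x}}_n(s)\in \td{F}_n(s,\td{x}_n(s))\subset F(t,\td{x}^*(t))+B_{2\veps}(0)$ for $s$ near $t$ and $n$ large, where convexity of $F(t,\td{x}^*(t))+B_\veps(0)$ absorbs the convex hull appearing in the definition of $d_\e{F}$. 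Mazur's lemma converts the weak convergence of $\dot{\td{x}}_n$ into strong convergence of convex combinations, which must lie in the closed convex set $\cl{F(t,\td{x}^*(t))+B_{2\veps}(0)}$; letting $\veps\to 0$ and using closedness of $F(t,\td{x}^*(t))$ yields $\dot{\td{x}}^*(t)\in F(t,\td{x}^*(t))$. Therefore $\td{x}^*\in\e{S}$, contradicting the $\veps_0$-separation.

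The main obstacle is precisely this final step: promoting the ``approximate'' inclusions satisfied a.e. by $\dot{\td{x}}_n$ into exact membership of the weak limit $\dot{\td{x}}^*$ in $F(t,\td{x}^*(t))$. All three hypotheses of {\assinc}---compactness, convexity, and upper semicontinuity of $F$---enter simultaneously here via an Aubin--Cellina-type closed-graph property for convex-valued multifunctions, and dropping any one of them would allow the weak limit to escape the target set.
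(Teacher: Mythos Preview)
The paper does not prove this theorem; it merely reproduces the statement from Filippov's book for the reader's convenience (see the opening sentence of \app{filippov}: ``For completeness, we reproduce the statement of the result we required.''). So there is no proof in the paper to compare against.

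That said, your sketch follows the standard route Filippov himself uses: enclose the solution funnel in a compact tube, argue by contradiction, extract a uniform limit via Arzel\`a--Ascoli and a weak limit of derivatives, then use convexity and upper semicontinuity to close the inclusion. Your use of Mazur's lemma is a clean modern substitute for Filippov's more hands-on averaging argument, and the identification of where all three hypotheses of {\assinc} are simultaneously needed is exactly right. The one place that deserves a bit more care in a full write-up is the continuation step: you assert that graphs of the perturbed solutions remain in the tube $\e{F}_0$ on all of $[a,b]$ for large $n$, deferring the escape case to a parenthetical. A cleaner organization is to first prove the upper-semicontinuity of the solution funnel (any uniform limit of perturbed solutions on a common subinterval is an unperturbed solution), then use that together with compactness of $\e{S}$ to show that for large $n$ the perturbed graphs cannot reach $\partial\e{F}_0$ before time $b$, hence exist on all of $[a,b]$ by the standard continuation criterion. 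With that adjustment the argument is complete.
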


\noindent
Here, a ``solution of the problem~\eqref{eqn:filippov:1}'' on the interval $[a,b]\subset\R$ is an absolutely continuous function $y:[a,b]\into \R^d$; 
its ``graph lies in $\e{F}$'' if $\set{(t,y(t)) : t\in [a,b]}\subset \e{F}$.

\end{document}